\documentclass[11pt,reqno]{amsart}
\usepackage{amsfonts}%
\usepackage{amsbsy}%
\usepackage{amssymb}%
\usepackage{amsthm}%
\usepackage{upgreek}%
\usepackage{enumitem}%
\usepackage{mathtools} 
\usepackage{mathdots}%
\usepackage{mathrsfs}%
\usepackage{microtype} 
\usepackage{stmaryrd} 
\usepackage{color}
\usepackage[usenames,dvipsnames]{xcolor}
\usepackage[pdftex,colorlinks,linkcolor=black,urlcolor=black,citecolor=black]{hyperref}%
\usepackage{fullpage}
\usepackage[T1]{fontenc} 
\usepackage{setspace}
\usepackage{scrextend}
\usepackage{microtype}

\setcounter{tocdepth}{3}
\setcounter{secnumdepth}{3}

\numberwithin{equation}{section}

\newcommand{\slanted}[1]{\slshape{#1}}

\newcommand{\scaps}[1]{{\scshape #1}}
\newcommand{\bscaps}[1]{\textsc{\textbf{#1}}}

\newcommand{\mcal}{\mathcal}
\newcommand{\mbb}{\mathbb}
\newcommand{\mbs}{\boldsymbol}
\newcommand{\mand}{\; \text{and}\;}
\newcommand{\mor}{\; \text{or}\;}

\newcommand{\Ex}{\mbb{E}}
\newcommand{\Naturals}{\mbb{N}}

\newcommand{\sm}{\setminus}
\newcommand{\discup}{\dot{\cup}}
\newcommand{\floor}[1]{\left\lfloor #1 \right\rfloor}
\newcommand{\ciel}[1]{\left\lceil #1 \right\rceil}

\let\eps=\varepsilon
\let\theta=\vartheta
\let\rho=\varrho
\let\sigma=\varsigma
\let\phi=\varphi



\def\QED{$\blacksquare$}
\def\inQED{$\square$}

\renewenvironment{proof}
{\vspace{1ex}\noindent{\slanted Proof.}\hspace{0.5em}}{\hfill \QED \vspace{1ex}}

\newenvironment{innerproof}
{\vspace{1ex}\noindent{\slanted Proof.}\hspace{0.5em}}{\hfill \inQED \vspace{1ex}}

\newenvironment{proofof}[1]
{\vspace{1ex}\noindent\scaps{Proof of #1.}\hspace{0.5em}}{\hfill \QED \vspace{1ex}}

\newenvironment{theorem}
{
\refstepcounter{equation} 
\vspace{-1ex}
\ \\
\noindent
\begin{it}
\noindent
\bscaps{Theorem~\theequation.}\hspace{-0.5ex}
}
{\end{it} \vspace{1ex}}

\newenvironment{lemma}
{
\refstepcounter{equation} 
\vspace{-1ex}
\ \\
\noindent
\begin{it}
\noindent
\bscaps{Lemma~\theequation.}\hspace{-0.5ex}
}
{\end{it} \vspace{1ex}}


\newenvironment{claim}
{
\refstepcounter{equation} 
\vspace{-1ex}
\ \\
\noindent
\begin{it}
\noindent
\bscaps{Claim~\theequation.}\hspace{-0.5ex}
}
{\end{it} \vspace{1ex}}



\newenvironment{definition}
{
\refstepcounter{equation} 
\vspace{-1ex}
\ \\
\noindent
\begin{it}
\noindent
\bscaps{Definition~\theequation.}\hspace{-0.5ex}
}
{\end{it} \vspace{1ex}}


\newenvironment{conjecture}
{
\refstepcounter{equation} 
\vspace{-1ex}
\ \\
\noindent
\begin{it}
\noindent
\bscaps{Conjecture~\theequation.}\hspace{-0.5ex}
}
{\end{it} \vspace{1ex}}





\newcommand{\A}{\mcal{A}}
\newcommand{\B}{\mcal{B}}
\newcommand{\C}{\mcal{C}}

\newcommand{\F}{\mcal{F}}
\newcommand{\G}{\mcal{G}}

\newcommand{\N}{\mcal{N}}

\renewcommand{\P}{\mcal{P}}

\newcommand{\R}{\mcal{R}}

\newcommand{\X}{\mcal{X}}





\makeatletter
\def\section{\@ifstar\unnumberedsection\numberedsection}
\def\numberedsection{\@ifnextchar[
  \numberedsectionwithtwoarguments\numberedsectionwithoneargument}
\def\unnumberedsection{\@ifnextchar[
  \unnumberedsectionwithtwoarguments\unnumberedsectionwithoneargument}
\def\numberedsectionwithoneargument#1{\numberedsectionwithtwoarguments[#1]{#1}}
\def\unnumberedsectionwithoneargument#1{\unnumberedsectionwithtwoarguments[#1]{#1}}
\def\numberedsectionwithtwoarguments[#1]#2{%
  \ifhmode\par\fi
  \removelastskip
  \vskip 1.7ex\goodbreak
  \refstepcounter{section}%
  \begingroup
  \noindent\leavevmode\large\lsstyle\scshape\normalsize
  \begin{center}\S \thesection.\ #2\end{center} 
  \endgroup
  \addcontentsline{toc}{section}{%
    \protect\numberline \thesection\ \ . \hspace{-0.3ex} {\scshape\lsstyle #1}}%
  }
\def\unnumberedsectionwithtwoarguments[#1]#2{%
  \ifhmode\par\fi
  \removelastskip
  \vskip 1.7ex\goodbreak
  \begingroup
  \noindent\leavevmode\Large\bfseries\scshape\centering 
  \begin{center} #2 \end{center} \par
  \endgroup
  \vskip 2ex\nobreak
  \addcontentsline{toc}{section}{%
    \hspace{1ex} #1}%
  }
\makeatother


\makeatletter
\def\subsection{\@ifstar\unnumberedsubsection\numberedsubsection}
\def\numberedsubsection{\@ifnextchar[
  \numberedsubsectionwithtwoarguments\numberedsubsectionwithoneargument}
\def\unnumberedsubsection{\@ifnextchar[
  \unnumberedsubsectionwithtwoarguments\unnumberedsubsectionwithoneargument}
\def\numberedsubsectionwithoneargument#1{\numberedsubsectionwithtwoarguments[#1]{#1}}
\def\unnumberedsubsectionwithoneargument#1{\unnumberedsubsectionwithtwoarguments[#1]{#1}}
\def\numberedsubsectionwithtwoarguments[#1]#2{%
  \ifhmode\par\fi
  \removelastskip
  \vskip 1.7ex\goodbreak
  \refstepcounter{subsection}%
  \noindent
  \leavevmode
  \begingroup
  \normalsize
  \noindent
  \begin{center} \thesubsection\ {\em\lsstyle #2} \end{center} 
  \endgroup
  \addcontentsline{toc}{subsection}{%
    \hspace{2ex}\protect\numberline{\thesubsection.}%
    \hspace{-7ex} {\em #1}}%
  }
\def\unnumberedsubsectionwithtwoarguments[#1]#2{%
  \ifhmode\par\fi
  \removelastskip
  \vskip 3ex\goodbreak
  \refstepcounter{subsection}%
  \noindent
  \leavevmode
  \begingroup
  \bfseries\normalsize
  \begin{center}\bscaps{#2.} \end{center}
  \endgroup
  \addcontentsline{toc}{subsection}{%
    \hspace{1ex} #1}%
  }
\makeatother

\makeatletter
\def\subsubsection{\@ifstar\unnumberedsubsubsection\numberedsubsubsection}
\def\numberedsubsubsection{\@ifnextchar[
  \numberedsubsubsectionwithtwoarguments\numberedsubsubsectionwithoneargument}
\def\unnumberedsubsubsection{\@ifnextchar[
  \unnumberedsubsubsectionwithtwoarguments\unnumberedsubsubsectionwithoneargument}
\def\numberedsubsubsectionwithoneargument#1{\numberedsubsubsectionwithtwoarguments[#1]{#1}}
\def\unnumberedsubsubsectionwithoneargument#1{\unnumberedsubsubsectionwithtwoarguments[#1]{#1}}
\def\numberedsubsubsectionwithtwoarguments[#1]#2{%
  \ifhmode\par\fi
  \removelastskip
  \vskip 1.7ex\goodbreak
  \refstepcounter{subsubsection}%
  \noindent
  \leavevmode
  \begingroup
  \normalsize
  \noindent
  \begin{center} \thesubsubsection\ {\sl\lsstyle #2} \end{center}
  \endgroup
  \addcontentsline{toc}{subsubsection}{%
    \hspace{2ex} \protect\numberline{\thesubsubsection.}%
    \hspace{-11ex} #1}%
  }
\def\unnumberedsubsubsectionwithtwoarguments[#1]#2{%
  \ifhmode\par\fi
  \removelastskip
  \vskip 3ex\goodbreak
  \noindent
  \leavevmode
  \begingroup
  \bfseries
  \bscaps{#2.}\  
  \endgroup
  \addcontentsline{toc}{subsubsection}{%
     #1}%
  }
\makeatother

\usepackage{chngcntr}
\counterwithin*{paragraph}{section}
\setcounter{secnumdepth}{5}



\makeatletter
\def\namedlabel#1#2{\begingroup
    #2%
    \def\@currentlabel{#2}%
    \phantomsection\label{#1}\endgroup
}
\makeatother

\def\reg{\mathrm{reg}}

\def\matref{\mathrm{\ref{lem:matching}}}
\def\pcref{\mathrm{\ref{lem:path-cover}}}

\def\regref{\mathrm{\ref{lem:reg}}}
\def\conref{\mathrm{\ref{lem:connect}}}
\def\absref{\mathrm{\ref{lem:path-absorb}}}
\def\absrefone{\mathrm{\ref{lem:path-absorb-2}}}

\def\resrefone{\mathrm{\ref{lem:res-2}}}
\def\cntref{\mathrm{\ref{lem:cnt}}}
\def\Fref{\mathrm{\ref{lem:F}}}

\def\conrefone{\mathrm{\ref{lem:con-1deg}}}

\usepackage{tikz}
\tikzstyle{aNode} = [circle, fill = black]
\tikzstyle{bNode} = [circle,draw = black, thick]

\newcommand{\pcherry}[1]{%
\begin{tikzpicture}[inner sep = 1pt, #1]%
\node (1) at (0,-2) [aNode]{};
\node (3) at (1.5,-2) [aNode]{};
\node (2) at (0.75,-1) [aNode]{};
\draw  (1) -- (2);
\draw  (2) -- (3);
\end{tikzpicture}%
}

\newcommand{\ppoints}[1]{%
\begin{tikzpicture}[inner sep = 1pt, #1]%
\node (1) at (0,-2) [aNode]{};
\node (3) at (1.5,-2) [aNode]{};
\node (2) at (0.75,-1) [aNode]{};
\end{tikzpicture}%
}

\newcommand{\pedge}[1]{%
\begin{tikzpicture}[inner sep = 1pt, #1]%
\node (1) at (0,-2) [aNode]{};
\node (3) at (1.5,-2) [aNode]{};
\node (2) at (0.75,-1) [aNode]{};
\draw  (1) -- (3);
\end{tikzpicture}%
}

\newcommand{\ptri}[1]{%
\begin{tikzpicture}[inner sep = 1pt, #1]%
\node (1) at (0,-2) [aNode]{};
\node (3) at (1.5,-2) [aNode]{};
\node (2) at (0.75,-1) [aNode]{};
\draw  (1) -- (2);
\draw  (2) -- (3);
\draw  (1) -- (3);
\end{tikzpicture}%
}

\def\cherry{\pcherry{scale=0.13}}

\def\points{\ppoints{scale=0.13}}
\def\edge{\pedge{scale=0.14}}
\def\tri{\ptri{scale=0.14}}

\begin{document}

\title{\lsstyle Tight Hamilton cycles in cherry-quasirandom\\ $3$-uniform hypergraphs}
\author{{\em Elad Aigner-Horev and Gil Levy}}
\date{}
\maketitle

\begin{abstract}
We employ the absorbing-path method in order to prove two results regarding the emergence of tight Hamilton cycles in the so called {\em two-path} or {\em cherry}-quasirandom $3$-graphs.

Our first result asserts that for any fixed real $\alpha >0$, cherry-quasirandom $3$-graphs of sufficiently large order $n$ having  minimum $2$-degree at least $\alpha (n-2)$ have a tight Hamilton cycle. 

Our second result concerns the minimum $1$-degree sufficient for such $3$-graphs to have a tight Hamilton cycle. Roughly speaking, we prove that for every $d,\alpha >0$ satisfying $d + \alpha >1$, any sufficiently large $n$-vertex such $3$-graph $H$ of density $d$ and minimum $1$-degree at least $\alpha \binom{n-1}{2}$, has a tight Hamilton cycle.  
\end{abstract}

\begin{addmargin}[6em]{6em}
{\small
\renewcommand{\contentsname}{}
\tableofcontents
}
\end{addmargin}

\newpage

\section{Introduction}

A theorem of Dirac~\cite{Dirac} asserts that an $n$-vertex ($n\geq 3$)  graph whose minimum degree is at least $n/2$ contains a Hamilton cycle; moreover, the degree condition imposed here is best possible. A rich and extensive body of work now exists concerning the extent to which Dirac's result can be extended to uniform hypergraphs see, e.g.,~\cite{Mathias1, approx,sharp,GPW12,Mathias2,JY2,JY1,Refute,Katona,Kuhn1,Kuhn2,Kuhn3,RR14,Mathias3,RRS06,RRS08,RRS11}\footnote{The study of perfect matchings in hypergraphs is intimately related to the Hamiltonicity problem. We omit references to such results as our work here was not directly influenced by this line of research.}. Allow us  to not reproduce here the intricate development of these results as outstanding accounts of these already exist in the excellent surveys~\cite{Khun5,RR10,Zhao}.

We confine ourselves to $3$-uniform hypergraphs ($3$-graphs, hereafter). 
A $3$-graph $C$ is said to form a {\em loose cycle} if its vertices can be cyclically ordered such that each edge of $C$ captures $3$ vertices appearing consecutively in the ordering, every vertex is contained in an edge, and any two consecutive\footnote{Order of the edges inherited from the ordering of the vertices.} edges meet in precisely one vertex. We say that $C$ forms a {\em tight cycle} if there exists a cyclic ordering of its vertices such that every $3$ consecutive vertices in this ordering define an edge of $C$; this particularly implies that any two consecutive edges meet in precisely $2$ vertices. 

For a $3$-graph $H$ and two distinct vertices $u$ and $v$ of it, define 
$$
\deg_H(v) := |N_H(v)| := \Big|\Big\{\{x,y\} \in \binom{V(H)}{2}: \{x,y,v\} \in E(H)\Big\}\Big| =|\{e \in E(H): v \in e\}| 
$$
$$
\deg_H(u,v) := |N_H(u,v)|:= |\{w \in V(H): \{u,v,w\} \in E(H)\}| =|\{e \in E(H): \{u,v\} \subset e\}|.
$$
We refer to $\deg_H(v)$ as the {\em degree of $v$} (alternatively, $1$-{\em degree}) and to $\deg_H(u,v)$ as the {\em codegree of $u$ and $v$} (alternatively, $2$-{\em degree}). Set 
$$
\delta(H) := \min_{v \in V(H)}\deg_H(v)\; \mand\; \delta_2(H) := \min_{\{u,v\} \in \binom{V(H)}{2}} \deg_H(u,v). 
$$

Resolving a conjecture of~\cite{Katona}, first approximately~\cite{RRS06} and then accurately~\cite{RRS11}, the latter result asserts that a sufficiently large $n$-vertex $3$-graph $H$ satisfying $\delta_2(H) \geq \floor{n/2}$ contains a tight Hamilton cycle. A construction appearing in~\cite{Katona} demonstrates that the  codegree condition imposed here is best possible. Finding the correct threshold for $\delta(H)$ at which a $3$-graph $H$ admits a tight Hamilton cycle remained elusive for quite some time though. The problem has come to be known as the $5/9$-{\em conjecture}~\cite[Conjecture~2.18]{RR10} asserting that sufficiently large $n$-vertex $3$-graphs $H$ satisfying $\delta(H) \geq (5/9+o(1))\binom{n-1}{2}$ admit a tight Hamilton cycle. Constructions appearing in~\cite{RR10,RR14} establish that 
the degree condition appearing in this conjecture is (asymptotically) best possible. The authors of~\cite{MC} established that such $3$-graphs admit a tight cycle covering all but $o(n)$ of the vertices. Then, in a major breakthrough~\cite{break} (preceded by the deep result of~\cite{Mathias3} and around the same time as~\cite{MC}), the $5/9$-conjecture has been resolved.

An additional result relevant to our account is that of~\cite{LMM}. Presentation of the latter requires a brief overview regarding {\sl quasirandom} $3$-graphs. Launched in~\cite{CGW89,T2,T1}, the study of {\sl quasirandom graphs} has developed into a rich and vast theory, see, e.g.~\cite{KS}. While a canonical definition of quasirandom graphs was already captured in~\cite{CGW89,T2,T1}, for hypergraphs the pursuit after a definition extending~\cite{CGW89} took much longer. An elaborate account regarding the development of this pursuit can be  seen in~\cite{Me,weak,eigen,poset,Towsner} and references therein. Only recently with the work of~\cite{Towsner} has this pursuit came to an end; an alternative combinatorial approach to the functional analytic work of~\cite{Towsner} appears in~\cite{Me}.

Roughly speaking, for $k \geq 3$ each set system of $[k] = \{1,\ldots,k\}$ forming a maximal anti-chain gives rise to a notion of quasirandomness for $k$-graphs. 
In the case of interest to us, that is $k=3$, each of the maximal anti-chains
$$
\{\{1\},\{2\},\{3\}\}\; ,\; \{\{1,2\},\{3\}\},  \{\{1,2\},\{2,3\} \}, \mand \{\{1,2\},\{2,3\},\{1,3\}\}
$$
defines a notion of quasirandomness referred to as $*$-{\em quasirandomness} with $* \in \{\points, \edge,\cherry,\tri\}$, respectively (concrete definitions follow below); here these notions are arranged from left to right in {\sl increasing order of strength} so to speak.  

A solid understanding of $\points\;$-quasirandomness (i.e., the weakest notion) was attained in~\cite{weak,eigen}. More generally, we now know from~\cite{Me,Towsner} (and owing much to~\cite{poset}) that all these notions are well-separated and form a certain hierarchy with $\points\;$-quasirandomness at the "bottom" as the weakest notion (so it forms the broadest class of hypergraphs). In what follows, however, we will not be bothered with these notions of quasirandomness per se. Instead we shall consider weaker related notions. Borrowing notation from~\cite{Tetra2,Tetra}, given $d, \rho \in (0,1]$, an $n$-vertex $3$-graph $H$ is said to be $(\rho,d)_{\points}$-{\em dense} if 
	\begin{align}
	e_H(X,Y,Z)  := |\{(x,y,z) \in X \times Y \times Z: \{x,y,z\} \in E(H)\}| 
	 \geq d |X||Y||Z| - \rho n^3 \label{eq:points}
	\end{align}
	holds for every $X,Y,Z \subseteq V(H)$. If $\rho$ and $d$ exist yet are not made explicit, then we say that $H$ is $\points\;$-{\em dense}. The notion of $\points\;$-quasirandomness comes about if one imposes on $e_H(X,Y,Z)$ the upper bound corresponding to~\eqref{eq:points}.

Returning to Hamiltonicity, one encounters the following remarkable result of~\cite{LMM} stated here for $3$-graphs only.
	
\begin{theorem}\label{thm:LMM}{\em~\cite{LMM}}
For every $d, \alpha \in (0,1]$ there exist an $n_0$ and a $\rho >0$ such that the following holds whenever $n \geq n_0$ and even. Let $H$ be an $n$-vertex $(\rho,d)_{\points}$-dense $3$-graph satisfying $\delta(H) \geq \alpha \binom{n-1}{2}$. Then, $H$ admits a loose Hamilton cycle. 
\end{theorem}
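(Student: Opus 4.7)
The plan is to deploy the absorbing-path method tailored to loose cycles in $\points\;$-dense $3$-graphs, combining the weak quasirandomness with the $1$-degree hypothesis to secure the abundance of the required substructures. Throughout, constants are related via a hierarchy $1/n \ll \rho \ll \gamma \ll \eta \ll d,\alpha$.

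The first main ingredient is a \textbf{connecting lemma}: for most pairs $\{u,v\}$ of vertices there are many short loose paths joining $u$ to $v$. To see this, one observes that the minimum $1$-degree condition guarantees that for each vertex $u$ the set $N_H(u) \subseteq \binom{V(H)}{2}$ is of size at least $\alpha\binom{n-1}{2}$, so that an averaging argument supplies, for most $u$, a vertex set $N_H^{(1)}(u)$ of size $\Omega(n)$ each of whose members forms with $u$ a pair of co-degree $\Omega(n)$. Iterating this while invoking (\ref{eq:points}) applied to appropriately chosen layers $(X,Y,Z)$ yields, for most $(u,v)$, many short loose $(u,v)$-paths; the only bad pairs are exceptional (of size $o(n)$). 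Taking a random \textbf{reservoir} $R$ of size $\gamma n$ and an elementary first-moment argument show that $R$ inherits the $\points\;$-density (with only slightly worse parameter) and can serve as an internal pool for connections: any non-exceptional $\{u,v\}$ admits an $(u,v)$-loose path with interior entirely inside $R$, and this property is preserved even after we have used a $(1-o(1))$-fraction of $R$.

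The second ingredient is the \textbf{absorbing lemma}. For each $v \in V(H)$ one defines a $v$-absorber as a short loose path $P_v$ with two fixed endpoints such that both $P_v$ and the corresponding path $P_v'$ obtained by inserting $v$ (covering the same endpoints) are loose paths in $H$. Using the minimum $1$-degree together with the $\points\;$-density one shows that each $v$ admits at least $\Omega(n^k)$ such absorbers for some fixed $k$: the min-$1$-degree at $v$ seeds the construction, while $\points\;$-density supplies the extensions closing the gadget into a valid absorber. Choosing a random family $\A$ of absorbers of size $\eta n$ then guarantees, by standard concentration, that \emph{every} vertex $v$ has $\Omega(\eta n)$ absorbers in $\A$. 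Chaining the absorbers in $\A$ into a single \emph{absorbing path} $P_{\text{abs}}$ is accomplished via the connecting lemma and the reservoir; the resulting $P_{\text{abs}}$ has size $O(\eta n)$ and can absorb any residual vertex set $T$ with $|T| \leq \beta n$ for some $\beta \ll \eta$.

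The third ingredient is a \textbf{path-cover lemma}: the hypergraph $H'=H-V(P_{\text{abs}})-R$ admits a covering by at most $o(n)$ vertex-disjoint loose paths, leaving at most $\beta n / 2$ vertices uncovered. This is proved by a greedy/regularity-style argument: repeatedly extract a long loose path from $H'$ using $\points\;$-density to guarantee extendability almost everywhere. Once the residue drops below $\beta n$, stop. Then the connecting/reservoir lemma stitches $P_{\text{abs}}$ with the $o(n)$ paths into a single loose cycle $C$ covering all but a set $T$ of $\leq \beta n$ vertices (the reservoir leftovers included in $T$). Finally, applying the absorbing property of $P_{\text{abs}}$ absorbs $T$ and yields the desired loose Hamilton cycle; the parity condition $n$ even ensures the total length is consistent with a loose cycle (which uses $n/2$ edges).

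The main obstacle is the \textbf{absorbing lemma} under the weak $\points\;$-density: whereas with co-degree or stronger quasirandomness hypotheses one has direct control on the number of extensions of a pair, here one must argue indirectly, using (\ref{eq:points}) on carefully chosen slices together with the global min-$1$-degree to secure that \emph{every} vertex (not merely most) has polynomially many absorbers. A similar care is needed in the connecting lemma, where one must handle the exceptional pairs by routing through $R$ via an initial one-edge ``correction'' before invoking the generic connection, all while ensuring the loose-path structure (single-vertex overlaps at consecutive edges) is preserved.
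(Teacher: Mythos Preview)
The paper does not prove Theorem~\ref{thm:LMM}; it is quoted verbatim as a result of~\cite{LMM} and serves only as motivation and context for the paper's own main result (Theorem~\ref{thm:main}) concerning \emph{tight} Hamilton cycles in $\cherry\;$-dense $3$-graphs. Consequently there is no proof in the paper to compare your proposal against.

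That said, your high-level outline is broadly consonant with the absorbing-path framework the paper itself deploys for Theorem~\ref{thm:main} (connecting lemma, absorbing-path lemma, path-cover lemma, reservoir lemma), and this is indeed the method used in~\cite{LMM}. However, your sketch is too schematic at precisely the points you yourself flag as obstacles. In particular, for the connecting lemma under only $\points\;$-density and a $1$-degree condition, your treatment of ``exceptional'' pairs is hand-wavy: you assert that bad pairs are $o(n)$ in number and can be corrected by a one-edge detour, but you do not justify why the endpoints that actually arise (ends of the absorbing path, ends of the cover paths) avoid being exceptional, nor how the correction preserves the loose structure. Similarly, in the absorbing lemma you claim ``every vertex has polynomially many absorbers'' via $\points\;$-density plus min-$1$-degree, but $\points\;$-density gives no control on individual pairs or links, so the passage from ``most'' to ``every'' requires a concrete gadget and argument that you have not supplied. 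These are exactly the technical hurdles that make~\cite{LMM} non-trivial, and your proposal does not resolve them; it merely names them.
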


\noindent
Theorem~\ref{thm:LMM} settles the issue of emergence of loose Hamilton cycles in  quasirandom $3$-graphs for any notion of quasirandomness  and any type of degree (the latter owing to~\cite[Remark~1.4]{RR10}). It asserts that all minimum degree conditions sufficient for the emergence of loose Hamilton cycles in quasirandom $3$-graphs are {\em degenerate} (i.e., any positive $\alpha$ suffices)\footnote{For hypergraphs with higher uniformity the full version of Theorem~\ref{thm:LMM} handles the emergence of the so called $1$-cycles.}. 

For tight cycles, however, a result analogous to Theorem~\ref{thm:LMM} does not exist for $\points\;$-quasirandom $3$-graphs. Indeed, ~\cite[Proposition~4]{LMM} asserts that for every $\rho >0$ and a sufficiently large $n$, an $n$-vertex $(\rho,1/8)_{\points}$-quasirandom $3$-graph $H$ exists satisfying $\delta(H) \geq (1/8 - \rho)\binom{n-1}{2}$ and having no tight Hamilton cycle. 
The constant $1/8$ here is not best possible though as the following construction demonstrates. Let $n\in \Naturals$ be sufficiently large and let $V = X \discup Y$ be a set of $n$ vertices such that $|X| = 2 n /3+1$ and $|Y|= n/3-1$ (assume $3 \mid n$). Let $G \sim G(n,p)$ be the random graph put on $V$ where each edge is put in $G$ independently at random with probability $p$; we determine $p$ below. Define $H$ to be the $3$-graph whose set of vertices is $V$ and whose set of edges consists of: 
\begin{itemize}
	\item all the sets $e \in \binom{V}{3}$ satisfying $G[e] \cong K_3$ and $e \subseteq X$ or $e \subseteq Y$ or $|e \cap X| =1$; 
	\item together with the sets $e \in \binom{V}{3}$ satisfying $2 = |e \cap X|:= |\{u,v\}|$ and $uv \notin E(G)$. 
\end{itemize}
An argument similar to the one used in~\cite[Construction~2]{RR14} asserts that $H$ has no tight Hamilton cycle. Indeed, no tight path can connect a triple contained in $X$ with a vertex of $Y$. Consequently, if $H$ were to admit a tight Hamilton cycle $C$ then $X$ must be an independent set in $C$ and $Y$ a vertex-cover of $C$. This together with the fact that $C$ is $3$-regular (with respect to $1$-degree, that is) we reach $n = e(C) \leq \sum_{y \in Y} \deg_C(y) = 3|Y| <n$; a contradiction. 
Every triple $e$ is taken into $H$ either with probability $p^3$ or $1-p$. Insisting  on $p^3 = 1-p$, so that $p = 0.6823$. Using binomial tail estimations it follows that it is highly likely that $H$ would have edge density $\approx 0.3177$, satisfy $\delta(H) \approx 0.245 n^2$, and be $\points\;$-dense. We acknowledge the discussions~\cite{private} regarding this construction. 

Replacing the degree condition seen in Theorem~\ref{thm:LMM} with a codegree condition would be insufficient in order to yield a result analogous to Theorem~\ref{thm:LMM}. Indeed, in~\cite{LMM} it is indicated that an adaption of the construction seen in~\cite[Proposition~4]{LMM} yields a $\points\;$-dense graph $H$ with $ \delta_2(H) \geq n/9$ admitting no tight Hamilton cycle.

\subsection{Our results}
If we were to "climb" up the hierarchy of notions of quasirandomness for $3$-graphs and strengthen the quasirandomness condition satisfied by the host $3$-graph would we then encounter an analogue of Theorem~\ref{thm:LMM} for tight Hamilton cycles? 
Let $d,\rho \in (0,1]$. An $n$-vertex $3$-graph $H$ is called $(\rho,d)_{\cherry}$-{\em dense} if 
	\begin{align}
	e_H(\vec G_1, \vec G_2) & := |\{(x,y,z) \in \P_2(\vec G_1,\vec G_2): \{x,y,z\} \in E(H)\}|  \geq d |\P_2(\vec G_1, \vec G_2)| - \rho n^3 \label{eq:cherry}
	\end{align}
	holds for every $\vec G_1,\vec G_2 \subseteq V(H) \times V(H)$,
	where
	$$
	\P_2(\vec G_1,\vec G_2) := \{(x,y,z) \in V(H)^3: (x,y) \in \vec G_1, (y,z) \in \vec G_2\}.
	$$ 
If $\rho$ and $d$ exist yet are not made explicit, then we say that $H$ is $\cherry\;$-{\em dense} (pronounced {\sl cherry-dense}). 

Our first main result asserts that the minimum codegree condition sufficient to imply the emergence of a tight Hamilton cycle in $\cherry\;$-dense $3$-graphs is degenerate.

\begin{theorem}\label{thm:main}
For every $d,\alpha \in (0,1]$, there exist an integer $n_0$ and a real $\rho >0$ such that the following holds for all $n \geq n_0$. Let $H$ be an $n$-vertex $(\rho,d)_{\cherry}$-dense $3$-graph satisfying $\delta_2(H) \geq \alpha (n-2)$. Then, $H$ has a tight Hamilton cycle. 
\end{theorem}

In Theorem~\ref{thm:main}, the parameter $d$ plays a somewhat docile role in the sense that no strict conditions other than it being fixed and positive need be imposed. For our second result, we consider $\cherry\;$-dense $3$-graphs with an imposed minimum $1$-degree condition. Here, a condition on $d$ arises (for us) as follows. 


\begin{theorem}\label{thm:main2}
For every $d,\alpha \in (0,1]$ satisfying $\alpha +d >1$, there exist an integer $n_0$ and a real $\rho >0$ such that the following holds for all $n \geq n_0$. Let $H$ be an $n$-vertex $(\rho,d)_{\cherry}$-dense $3$-graph satisfying $\delta(H) \geq \alpha \binom{n-1}{2}$. Then, $H$ has a tight Hamilton cycle. 
\end{theorem}

Unlike Theorem~\ref{thm:main}, the requirement $\alpha +d >1$ does not allow for a degenerate minimum $1$-degree condition. Nevertheless, it is more flexible than other results mentioned thus far. We conjecture (with some hesitation) that the condition $\alpha + d > 1$ appearing in Theorem~\ref{thm:main} can be replaced with degenerate conditions for both $\alpha$ and $d$ as follows. 

\begin{conjecture}\label{conj:main}
For every $d,\alpha \in (0,1]$ there exist an $n_0$ and $\rho >0$ such that the following holds for all $n \geq n_0$. Let $H$ be an $n$-vertex $(\rho,d)_{\cherry}$-dense $3$-graph satisfying $\delta(H) \geq \alpha \binom{n-1}{2}$. Then, $H$ has a tight Hamilton cycle. 
\end{conjecture}

"Between" $\points\;$-quasirandomness and $\cherry\;$-quasirandomness, there lies $\edge\;$-quasirandomness. For $d,\rho \in (0,1)$ an $n$-vertex $3$-graph $H$ is called $(\rho,d)_{\edge}$-{\em dense} if 
	\begin{align}
	e_H(\vec P,X) & := 
	\big|\big\{\big((u,v),x\big) \in \vec P \times X: \{u,v,x\} \in E(H)\big\}\big|  \geq d |\vec P||X| - \rho n^3 \nonumber
	\end{align}
	holds for every $\vec P \subseteq V(H) \times V(H)$ and every $X \subseteq V(H)$. 
	Unlike $\cherry\;$-quasirandom $3$-graphs, for which the Tur\'an density of $K_4^{(3)}$ (the complete $3$-graph on $4$ vertices) is zero~\cite{Tetra2}, the Tur\'an density of $K_4^{(3)-}$ (i.e., $K_4^{(3)}$ with a single edge removed) in $\edge\;$-quasirandom $3$-graphs is $1/4$~\cite{Tetra}. The {\sl absorbing configurations} (see~\S~\ref{sec:absorb} for details) used in this account involve copies of $K_4^{(3)-}$. Consequently results in the spirit of Theorems~\ref{thm:main} and~\ref{thm:main2} cannot possibly be attained for $\edge\;$-quasirandom $3$-graphs using the absorbing-path method and the absorbing configurations used in our account. We subscribe to the point of view that the flaw is not in the method and that for $\edge\;$-quasirandom $3$-graphs the minimum $1$-degree and $2$-degree conditions sufficient to imply tight Hamiltonicity are both non-degenerate. The fact that the Tur\'an density of $K_4^{(3)-}$ in $\points\;$-quasirandom $3$-graphs coincides with that seen in $\edge\;$-quasirandom $3$-graphs~\cite{Tetra} makes it not far-fetched to suspect that the minimum degree conditions in $\{\points,\edge\}$-quasirandom $3$-graphs coincide as well.  \vspace{2ex}
	
\noindent
\scaps{Open problems.} Are the following true? 
\begin{itemize}
	\item For every $d > 1/3$ and $\eps >0$, there exist an integer $n_0$ and a real $\rho>0$ such that the following holds whenever $n \geq n_0$. Let $H$ be an $n$-vertex $\points\;$-dense $3$-graph of density $d$ and satisfying $\delta_2(H) \geq n/3+\eps n$. Then, $H$ has a tight Hamiltonian cycle. 
	\item For every $d > 1/2$ and $\eps >0$, there exist an integer $n_0$ and a real $\rho>0$ such that the following holds whenever $n \geq n_0$. Let $H$ be an $n$-vertex $\points\;$-dense $3$-graph of density $d$ and satisfying $\delta_1(H) \geq n^2/4+\eps n$. Then, $H$ has a tight Hamilton cycle. 
	\item In the two questions above replace $\points\;$-denseness with $\edge\;$-denseness. 
\end{itemize}	
	
During the review and revision of this manuscript Ara\'ujo, Piga, and Schacht~\cite{AMUC} announced to have proved that for every $\eps >0$ there exists a $\rho >0$ such that every sufficiently large $(\rho,1/4+\eps)_{\edge}$-dense $3$-graph $H$ satisfying $\delta(H) \geq \eps \binom{n-1}{2}$, contains a tight Hamilton cycle. Moreover, that the constant $1/4$ is optimal. Their result implies our Theorem~\ref{thm:main2} and settles some of the questions appearing above.
At the time of writing these lines the full proof of their result was not avialable to us.

\subsection{Our approach}\label{sec:approach}
We employ the so called {\em absorbing path method} introduced in~\cite{RRS06} and further developed in~\cite{RRS08,RRS09}. Roughly speaking, this method reduces the problem of finding a tight Hamilton cycle to that of finding a tight cycle supporting two properties. First, it covers all but $\zeta n$ vertices for some carefully chosen fixed "small" $\zeta \in (0,1)$. Second, it contains a special path referred to as an {\sl absorbing-path} (rigorously defined below) which has the capability of being rerouted using only those "missing" $\zeta n$ vertices while keeping its ends unchanged and in this manner {\sl absorb}, so to speak, all missing vertices rendering a tight Hamilton cycle. Numerous reincarnations of this method now exist in the literature see e.g.,~\cite{Mathias2,LMM,break,Mathias3}. We consequently omit a more rigorous outline of this method and proceed directly to the statement of the so called {\em pillar lemmata} underlying this method; these being the so called {\em connecting lemma}, {\em absorbing-path lemma}, {\em path-cover lemma}, and {\em reservoir lemma}.

By a {\em $k$-path} we mean a $3$-graph $P$ on $k$ vertices and $k-2$ edges such that there exists a labelling of $V(P)$ namely $v_1,\ldots,v_k$ such that $\{v_i,v_{i+1},v_{i+2}\} \in E(P)$ for every $i \in [1,k-2]$. It is said that $P$ {\em connects} the pairs $\{v_1,v_2\}$ and $\{v_{k-1}, v_{k-2}\}$; also referred to as the {\em end-pairs} or simply the {\em ends} of $P$.
Throughout, the term {\em path} is used to mean a tight path. 

Roughly speaking, in the absorbing path method, the role of the {\sl connecting lemma} is, as its name suggests, to connect two disjoint pairs of vertices via a short path. A trivial precondition for such a lemma is that 
the given pairs that are to be connected both admit some non-trivial codegree.
The $3$-graphs of Theorem~\ref{thm:main} come equipped with a minimum codegree assumption which although degenerate will be sufficient in order to establish such a lemma owing to the $\cherry\;$-denseness of the host $3$-graph. The $3$-graphs of Theorem~\ref{thm:main2}, however, do not support a minimum codegree condition. As a result we will require two {\sl separate} connecting lemmas; one for each of our main results.  

Our connecting lemma fitting for Theorem~\ref{thm:main} reads as follows.  

\begin{lemma}\label{lem:connect}{\em\bf (Connecting lemma: $2$-degree)}
For every $d_{\conref}, \alpha_{\conref} \in (0,1]$, there exist an integer $n_{\conref}:= n_{\conref}(d_{\conref}, \alpha_{\conref})$ and a real $\rho_{\conref} := \rho_{\conref}(d_{\conref},\alpha_{\conref}) >0$ such that the following holds for all $n \geq n_{\conref}$ and 
$0 < \rho < \rho_{\conref}$. 

Let $H$ be an $n$-vertex $(\rho,d_{\conref})_{\cherry}$-dense $3$-graph satisfying $\delta_2(H) \geq \alpha_{\conref} (n-2)$ and let $\{x,y\}$ and $\{x',y'\}$ be two disjoint pairs of vertices. Then, there exists a $10$-path in $H$ connecting $\{x,y\}$ and $\{x',y'\}$. 
\end{lemma}

The premise of Theorem~\ref{thm:main2} allows for $3$-graphs with pairs of vertices having codegree zero or one that is too modest for our methods to work. Fortunately, regardless of any degree conditions, $\cherry\;$-dense $3$-graphs admit a certain {\sl statistical} minimum codegree condition in the sense that {\sl most} pairs of vertices admit a {\sl meaningful} codegree. This we make precise below in~\eqref{eq:Bbeta-size-2}. Unlike Lemma~\ref{lem:connect} then, the connecting lemma fitting for Theorem~\ref{thm:main2} appeals to this statistical minimum codegree condition, and upon a judicious choice of parameters connects pairs of vertices whose codegree is sufficiently high. 
It is in this lemma that we encounter the following function $g(\cdot)$. Given reals $x,y > 0$ satisfying $x + y > 1$, let 
\begin{equation}\label{eq:g-func}
g(x,y):= \min\{x,y,(x+y-1)/(y+1)\}
\end{equation}
The inequality $\alpha +d >1$ appearing in Theorem~\ref{thm:main2} traces back 
to the third term of this function. 

\begin{lemma}\label{lem:con-1deg}{\em\bf (Connecting lemma: $\mbs{1}$-degree)}
For every $d_{\conrefone}, \alpha_{\conrefone}, \eta_{\conrefone} \in (0,1]$, satisfying $\alpha_{\conrefone} + d_{\conrefone} >1$, and $\eta_{\conrefone} < g(\alpha_{\conrefone},d_{\conrefone})$, there exist an integer $n_{\conrefone}:= n_{\conrefone}(d_{\conrefone}, \alpha_{\conrefone},\eta_{\conrefone})$ and a real $\rho_{\conrefone} := \rho_{\conrefone}(d_{\conrefone},\alpha_{\conrefone},\eta_{\conrefone}) >0$ such that the following holds for all $n \geq n_{\conrefone}$ and  
$0 < \rho < \rho_{\conrefone}$.

Let $H$ be an $n$-vertex $(\rho,d_{\conrefone})_{\cherry}$-dense $3$-graph satisfying $\delta(H) \geq \alpha_{\conrefone} \binom{n-1}{2}$ and let $\{x,y\}$ and $\{x',y'\}$ be two disjoint pairs of vertices each having codegree at least $(d_{\conrefone}-\eta_{\conrefone})n$. Then, there exists a $10$-path in $H$ connecting $\{x,y\}$ and $\{x',y'\}$. 
\end{lemma}  

It is in fact true that for $\cherry\;$-dense $3$-graphs, a connecting lemma imposing no minimum degree conditions of any kind is possible. Such a lemma is presented in Lemma~\ref{lem:connect-useless} in~\S~\ref{sec:useless}. Alas, for our needs this lemma is insufficient and this too is explained in~\S~\ref{sec:useless}.

A path $A$ in an $n$-vertex $3$-graph $H$, is said to be $m$-{\em absorbing} if for every set $U \subseteq V(H) \sm V(A)$ with $|U| \leq m$ there is a path $A_U$ having the same ends as $A$ and satisfying $V(A_U) = V(A) \cup U$. A path is said to be a $(\beta,\mu,\kappa)$-{\em absorbing-path}, if it is $\mu n$-absorbing, has length at most $\kappa n$, and both its ends have codegree at least $\beta n$ in $H$.

The "split" between the aforementioned connecting lemmas propagates (for us) onwards onto the absorbing-path lemmas leading to a need to support two separate such lemmas. The absorbing-path lemma fitting for Theorem~\ref{thm:main} reads as follows.      

\begin{lemma}\label{lem:path-absorb}{\em\bf (Absorbing-path lemma: $2$-degree)}
For every  $d_{\absref},\alpha_{\absref}, \beta_{\absref} \in (0,1]$ such that $ \beta_{\absref} < \min\{d_{\absref},\alpha_{\absref}\}$, there exist an integer $n_{\absref} := n_{\absref}(d_{\absref},\alpha_{\absref}, \beta_{\absref})$, a real $\rho_{\absref} :=  \rho_{\absref}(d_{\absref},\alpha_{\absref},\beta_{\absref})>0$, a real $0<\kappa_{\absref}:=\kappa_{\absref}(d_{\absref},\alpha_{\absref},\beta_{\absref}) \leq \beta_{\absref}/2$, and a real $\mu_{\absref}:=\mu_{\absref}(d_{\absref},\alpha_{\absref}) >0$ such that 
the following holds
whenever $n \geq n_{\absref}$ and $0 < \rho < \rho_{\absref}$. 

If $H$ is an $n$-vertex $(\rho,d_{\absref})_{\cherry}$-dense $3$-graph satisfying $\delta_2(H) \geq \alpha_{\absref} (n-2)$, then it admits a $(\beta_{\absref},\mu_{\absref},\kappa_{\absref})$-absorbing-path. 
\end{lemma}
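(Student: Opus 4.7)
Define a \emph{$v$-absorber} to be an ordered $4$-tuple $(x_1,x_2,x_3,x_4)$ of distinct vertices in $V(H) \setminus \{v\}$ such that the five triples $\{x_1,x_2,x_3\}, \{x_2,x_3,x_4\}, \{x_1,x_2,v\}, \{x_2,v,x_3\}, \{v,x_3,x_4\}$ all lie in $E(H)$. The tight $4$-path $T_v := (x_1,x_2,x_3,x_4)$ and the tight $5$-path $T_v^+ := (x_1,x_2,v,x_3,x_4)$ share the end-pairs $\{x_1,x_2\}$ and $\{x_3,x_4\}$, so substituting $T_v^+$ for $T_v$ inside a tight path inserts $v$ while preserving the rest of the structure. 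Structurally each $v$-absorber is a union of two copies of $K_4^{(3)-}$ sharing the edge $\{x_2,v,x_3\}$, as announced in \S\ref{sec:approach}. Write $\mathcal{B}(v)$ for the set of $v$-absorbers both of whose end-pairs are $\beta_{\absref}$-relevant, as required in order to invoke Lemma~\ref{lem:connect} later.

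\textbf{Counting step.} I show $|\mathcal{B}(v)| \geq c n^4$ for every $v$, where $c \geq d_{\absref}^2\alpha_{\absref}^9 / 10$ (provided $\rho$ is sufficiently small). The minimum-degree hypothesis forces the link $L_v := \{(y,z) \in V(H)^2 : \{y,v,z\} \in E(H)\}$ to have size $|L_v| \geq \alpha_{\absref}(n-1)(n-2)$, so $L_v$ contains $\geq \alpha_{\absref}^2 n^3$ ordered cherries by power-mean. The $\cherry$-density hypothesis applied to $(L_v, L_v)$ yields $\geq (d_{\absref}\alpha_{\absref}^2 - \rho) n^3$ triples $(x_1,x_2,x_3)$ with $\{x_1,x_2,x_3\} \in E(H)$ and $(x_1,x_2), (x_2,x_3) \in L_v$. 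An averaging argument extracts a dense sub-$2$-graph $M \subseteq L_v$ of middle-pairs $(x_2,x_3)$ having many such $x_1$-extensions, and a second application of cherry-density to $(M, L_v)$ then provides, for each such $(x_2,x_3)$, vertices $x_4$ with $\{x_2,x_3,x_4\} \in E(H)$ and $(x_3,x_4) \in L_v$. Tracking the losses at each stage---the ninth power of $\alpha_{\absref}$ arising from the repeated use of the minimum-degree condition on each of the three edges through $v$---the total number of $v$-absorbers is at least $d_{\absref}^2\alpha_{\absref}^9 n^4/5$. Finally, at most $2\beta_{\absref} n^4$ of these have a non-$\beta_{\absref}$-relevant end-pair; since $\beta_{\absref} \geq d_{\absref}^2\alpha_{\absref}^9$, absorbing this cut into a factor of $2$ leaves $|\mathcal{B}(v)| \geq c n^4$.

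\textbf{Random selection and chaining.} Form a random family $\F \subseteq \bigcup_v \mathcal{B}(v)$ by including each $4$-tuple independently with probability $p := \kappa_{\absref}/(20n^3)$. Since $|\bigcup_v \mathcal{B}(v)| \leq n^4$, Chernoff bounds together with a first-moment calculation show that with positive probability $|\F| \leq \kappa_{\absref} n / 10$, $|\F \cap \mathcal{B}(v)| \geq pcn^4/2 =: 2\fm_{\absref} n$ for every $v$, and $O(p^2 n^7) = o(n)$ pairs of members of $\F$ share a vertex; deleting one absorber per collision yields a vertex-disjoint family $\F'$ still with $|\F' \cap \mathcal{B}(v)| \geq \fm_{\absref} n$ for every $v$. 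Now iteratively invoke Lemma~\ref{lem:connect} on consecutive members of $\F'$ to chain them into a single tight path $A$: every end-pair in $\F'$ is $\beta_{\absref}$-relevant in $H$ and remains relevant (with a slightly weakened parameter) after forbidding the $O(\kappa_{\absref} n)$ already-used vertices, and the resulting sub-hypergraph stays $(\rho', d_{\absref})_{\cherry}$-dense with $\rho'$ only slightly larger than $\rho$---within $\rho_{\conref}$ provided $\rho$ is chosen small enough. Each connection contributes $6$ new internal vertices plus the $2$ unshared vertices of the next absorber, so $|V(A)| \leq 4 + 8(|\F'|-1) \leq \kappa_{\absref} n$, and the end-pairs of $A$ inherit $\beta_{\absref}$-relevance from the extreme members of $\F'$. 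For any $U \subseteq V(H) \setminus V(A)$ with $|U| \leq \fm_{\absref} n$, a greedy matching---each $u \in U$ having $\fm_{\absref} n > |U| - 1$ choices in $\F' \cap \mathcal{B}(u)$---assigns each $u$ to a distinct $T \in \F' \cap \mathcal{B}(u)$, and swapping each $T_u$ for $T_u^+$ then produces a tight path on $V(A) \cup U$ with unchanged ends, as required.

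\textbf{Main obstacle.} The heart of the argument is the counting step. The $\cherry$-density hypothesis controls only three-vertex configurations sharing a pair, whereas a $v$-absorber is a rigid five-edge configuration with three edges passing through the single vertex being absorbed. Iterating cherry-density while faithfully tracking the joint combinatorial structure with the link $L_v$, and retaining enough slack that a constant fraction of the resulting absorbers have $\beta_{\absref}$-relevant end-pairs, is the delicate part. The random selection, collision removal, and chaining stages follow the now-standard template of the absorbing-path method, e.g.\ as in~\cite{RRS06,RRS08}.
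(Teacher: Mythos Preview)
Your proposal follows the same three–step template as the paper: count absorbers per vertex, select a small random family and clean collisions, then chain the surviving absorbers via the connecting lemma. The selection step is essentially the paper's Lemma~\ref{lem:F}, and your chaining argument matches the paper's inductive construction (including the passage from $\beta$-relevance in $H$ to $(\beta-\kappa)$-relevance in the remaining graph via Observation~\ref{obs:remain-relevant}).

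The counting step is where you diverge. The paper works in the $\beta$-link $L_{\beta,v}$ and first counts copies of the $2$-path $P_4$ there via Sidorenko's inequality (yielding $\Omega(\alpha^3 n^4)$ such paths), then invokes Lemma~\ref{lem:lowdegpair} twice to show that most middle pairs $(u,w)$ extend to edges $\{x,u,w\},\{u,w,y\}\in E(H)$; this is where the paper's constant $c_{\cntref}=d^2\alpha^9/2^{48}$ arises. Your route---two direct applications of cherry-density interleaved with an averaging step---can also be made to work, but your bookkeeping does not: the sketch you give (Cauchy--Schwarz on cherries in $L_v$, one cherry-density application, extraction of $M$, a second application to $(M,L_v)$, then recombination) produces a constant of shape $d^4\alpha^6$, not the $d^2\alpha^9/5$ you assert, and the claim that the second application yields extensions ``for each such $(x_2,x_3)$'' overstates what cherry-density gives---you still need an averaging (or the paper's Lemma~\ref{lem:lowdegpair}) to localise it.

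There is also a genuine slip in your $\beta$-relevance accounting. You subtract ``at most $2\beta_{\absref} n^4$'' bad tuples from a stock of $d_{\absref}^2\alpha_{\absref}^9 n^4/5$ while simultaneously invoking $\beta_{\absref}\geq d_{\absref}^2\alpha_{\absref}^9$; that leaves a negative number. The correct bound is $2|B_\beta|\,n^2\leq \tfrac{2\rho}{d-\beta}n^4$ by~\eqref{eq:Bbeta-size-2}, which is negligible once $\rho$ is small. The paper avoids this step altogether by counting inside $L_{\beta,v}$ from the outset, so that condition~(A.2) (and hence $\beta$-relevance of both end-pairs) is automatic.
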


The condition $\beta_{\absref} < \min\{d_{\absref},\alpha_{\absref}\}$ appearing in the last lemma seems somewhat puzzling in view that part of the premise of the lemma is that $\delta_2(H) \geq \alpha_{\absref}(n-2)$. To a certain extent, this condition can be mitigated. We incur it here due to having certain ingredients required for the proofs of Lemma~\ref{lem:path-absorb} and its counterpart, namely Lemma~\ref{lem:path-absorb-2} stated next, consolidated. This then mandates that $\beta_{\absref} < d_{\absref}$ be imposed as to render subsequent applications of~\eqref{eq:Bbeta-size-2} meaningful. The condition $\beta_{\absref} < \alpha_{\absref}$ is admittingly "artificial"; it is kept for brevity purposes seen in the proof of Lemma~\ref{lem:path-absorb}. 

Given two reals $\alpha,\beta >0$, we write $\beta \ll \alpha$ to indicate that these can be set such that $\beta$, while fixed, can be chosen arbitrarily smaller than $\alpha$. The aforementioned counterpart of Lemma~\ref{lem:path-absorb} fitting for the setting of Theorem~\ref{thm:main2} reads as follows.

\begin{lemma}\label{lem:path-absorb-2}{\em\bf (Absorbing-path lemma: $1$-degree)}
For every $d_{\absrefone}, \alpha_{\absrefone}, \eta_{\absrefone} \in (0,1]$ satisfying $\alpha_{\absrefone} + d_{\absrefone} >1$ and $\eta_{\absrefone} < g(\alpha_{\absrefone},d_{\absrefone})$, there exist an integer $n_{\absrefone} := n_{\absrefone}(d_{\absrefone},\alpha_{\absrefone}, \eta_{\absrefone})$, a real $\rho_{\absrefone} :=  \rho_{\absrefone}(d_{\absrefone},\alpha_{\absrefone},\eta_{\absrefone})>0$, a real $0 < \kappa_{\absrefone}:=\kappa_{\absrefone}(d_{\absrefone},\alpha_{\absrefone},\eta_{\absrefone}) \ll \eta_{\absrefone} $, and a real $\mu_{\absrefone}:=\mu_{\absrefone}(d_{\absrefone},\alpha_{\absrefone}, \eta_{\absrefone}) >0$ such that 
the following holds
whenever $n \geq n_{\absrefone}$ and $0 < \rho < \rho_{\absrefone}$. 

If $H$ is an $n$-vertex $(\rho,d_{\absrefone})_{\cherry}$-dense $3$-graph satisfying $\delta(H) \geq \alpha_{\absrefone} \binom{n-1}{2}$, then it admits a $(d_{\absrefone}-\eta_{\absrefone},\mu_{\absrefone},\kappa_{\absrefone})$-absorbing-path. 
\end{lemma}

For the next pillar lemma, $\cherry\;$-denseness is not required. Here, a weaker notion of {\sl denseness} suffices. Let $d,\rho \in (0,1]$ and let $H$ be an $n$-vertex $3$-graph. If 
\begin{equation}\label{eq:set-dense}
e_H(X) := \left| E(H) \cap \binom{X}{3} \right| \geq d\binom{|X|}{3} -\rho n^3
\end{equation}
holds for every $X \subseteq V(H)$, then $H$ is said to be $(\rho,d)$-{\em dense}. If $\rho$ and $d$ are known to exist yet are not made explicit, then we say that $H$ is $1${\em-set-dense}\footnote{If in addition to~\eqref{eq:set-dense} $H$ also satisfies its corresponding upper bound then $H$ is $\points\;$-quasirandom (see e.g.~\cite{weak}).}.  The following lemma imposes no minimum degree conditions on the $3$-graph. It will be used in the proofs of both Theorem~\ref{thm:main} and Theorem~\ref{thm:main2}.

\begin{lemma}\label{lem:path-cover}{\em\bf (Path-cover lemma)}
For every $d_{\pcref},\zeta_{\pcref} \in (0,1]$,  there exist $n_{\pcref} := n_{\pcref}(d_{\pcref},\zeta_{\pcref})$,  $\rho_{\pcref} = \rho_{\pcref}(d_{\pcref},\zeta_{\pcref}) >0$, and an integer $\ell_{\pcref} = \ell_{\pcref}(d_{\pcref},\zeta_{\pcref})$ such that the following holds for all $n \geq n_{\pcref}$ and $0<\rho < \rho_{\pcref}$. 

Let $H$ be an $n$-vertex $(\rho,d_{\pcref})$-dense $3$-graph. Then, all but at most $ \zeta_{\pcref} n$ vertices of $H$ can be covered using at most $\ell_{\pcref}$ vertex-disjoint paths. 
\end{lemma}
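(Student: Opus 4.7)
I would reduce this lemma to the following sub-claim, which is the heart of the matter: every $(\rho, d_{\pcref})$-dense $3$-graph $H'$ on $n' \geq \zeta_{\pcref} n$ vertices contains a tight path of length at least $\eta n'$, for some $\eta = \eta(d_{\pcref})>0$, provided $\rho$ is sufficiently small. Granted this sub-claim, the lemma follows by iteration: extract such a path $P_1$ in $H$, delete its vertices, observe that the residual $3$-graph is still $(\rho', d_{\pcref})$-dense on $n-|V(P_1)|$ vertices (with $\rho' \leq \rho/\zeta_{\pcref}^3$; this is exactly why we need slack in $\rho$), apply the sub-claim again, and so on. Each iteration removes an $\eta$-fraction of what currently remains, so after at most $\gamma_{\pcref} = O\bigl(\log(1/\zeta_{\pcref})/\eta\bigr)$ iterations, fewer than $\zeta_{\pcref} n$ vertices remain uncovered.

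To establish the sub-claim, I would invoke the weak regularity lemma for $3$-graphs (of the Chung--Graham / Frankl--R\"odl flavour), obtaining a partition of $V(H')$ into nearly equal parts $V_1, \ldots, V_t$ such that most triples $(V_i, V_j, V_k)$ are $\varepsilon$-regular of some density $d_{ijk}$. Form the reduced $3$-graph $R$ on $[t]$ whose edges are those index-triples for which the corresponding triple of parts is $\varepsilon$-regular of density at least $d_{\pcref}/2$. Applying the $(\rho, d_{\pcref})$-density of $H'$ to $X = V(H')$ and noting that contributions to $e_{H'}(V(H'))$ from sparse triples, irregular triples, and diagonal triples are all negligible for the parameter ordering $\varepsilon \ll \rho \ll d_{\pcref}$, one obtains $|E(R)| \geq (d_{\pcref}/4)\binom{t}{3}$.

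To extract a long tight path in $R$, iteratively delete from $R$ every pair of codegree below $\eta' t$ for a suitably small $\eta' = \eta'(d_{\pcref})>0$; this purges at most $\eta'\binom{t}{2}\cdot t \leq (d_{\pcref}/8)\binom{t}{3}$ edges, so the pruned graph $R'$ remains non-empty and satisfies a minimum codegree condition of the form $\delta_2(R') \geq \eta' t$. A trivial greedy extension then produces a tight path $i_1, \ldots, i_s$ in $R'$ with $s \geq \eta' t$. This tight path lifts to a tight path in $H'$ via the standard one-vertex-at-a-time embedding: having selected $x_1, \ldots, x_{k-1}$ with $x_j \in V_{i_j}$, the $\varepsilon$-regularity of $(V_{i_{k-2}}, V_{i_{k-1}}, V_{i_k})$ at density $\geq d_{\pcref}/2$ implies that, outside of an $\varepsilon$-small set of exceptional pairs, $(x_{k-2}, x_{k-1})$ has $\Omega_{d_{\pcref}}(n'/t)$ common neighbours in $V_{i_k}$ --- far more than enough to avoid the at most $s \cdot \varepsilon$ previously used vertices of $V_{i_k}$. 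The resulting tight path has length $(1-o(1))\cdot s \cdot (n'/t) \geq \eta n'$, proving the sub-claim.

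The main technical obstacle is orchestrating the lift so that exceptional pairs and previously used vertices never derail the extension; this is handled by the usual device of sacrificing an $\varepsilon$-fraction of each $V_i$ in advance as a "buffer" and by selecting the quantifier order $\varepsilon \ll \rho \ll \zeta_{\pcref}, d_{\pcref}$, with $n_{\pcref}$ chosen large enough to accommodate the regularity partition.
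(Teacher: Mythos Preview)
Your proposal has a genuine gap in the lifting step. A tight path $i_1,\ldots,i_s$ in the reduced graph $R'$ consists of $s \leq t \leq T_{\regref}$ \emph{distinct} clusters, and the one-vertex-at-a-time embedding you describe (``$x_j \in V_{i_j}$'') produces a tight path in $H'$ of length exactly $s$ --- a constant bounded by $T_{\regref}$, not $\eta n'$. The claimed output length $(1-o(1))\cdot s \cdot (n'/t)$ would require each cluster to contribute roughly $n'/t$ vertices, i.e., the embedded path must \emph{wind} through clusters repeatedly; but a tight path in $R'$ visiting each cluster once provides no mechanism for returning to a cluster already left, and your write-up does not supply one. (The phrase ``previously used vertices of $V_{i_k}$'' betrays the confusion: under the embedding you actually describe, each cluster is visited once, so there are none.) Consequently the sub-claim is not established, and the outer iteration never gets off the ground.

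The paper takes a different route that sidesteps this entirely and applies regularity only once. It shows the reduced graph $R_{d'}$ inherits $(\rho',d/2)$-density from $H$, and then --- rather than seeking a long tight path in $R_{d'}$ --- finds a nearly-perfect \emph{matching} $M$ in $R_{d'}$ (any maximal matching leaves an independent set, which density forces to be tiny; this is Lemma~\ref{lem:matching}). Each matched edge corresponds to an $(\eps,d')$-regular triple $(V_i,V_j,V_k)$, and the path-packing lemma of~\cite{RRS08} (quoted here as Lemma~\ref{lem:pack}) covers all but a $2\eps$-fraction of $V_i\cup V_j\cup V_k$ with $O_{\eps,d'}(1)$ vertex-disjoint tight paths by winding $V_i,V_j,V_k,V_i,V_j,V_k,\ldots$ \emph{within that single triple}. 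Summing over $M$ yields at most $\gamma_{\pcref}$ paths and at most $\zeta n$ uncovered vertices. Your outer iteration could be repaired by replacing the sub-claim's proof with ``a single edge of $R$ gives, via Lemma~\ref{lem:pack}, a path of length $\Omega(n'/t)$'', but then the tight-path-in-$R$ and codegree-pruning machinery become superfluous, and you still invoke regularity $O(T_{\regref}\log(1/\zeta))$ times instead of once.
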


The fourth and last pillar lemma is the reservoir 
lemma. We employ the {\sl all encompassing}\footnote{All encompassing in the sense that it can handle the $1$-degree and $2$-degree settings in one stroke.} reservoir lemma of~\cite{Mathias3} which will service both Theorem~\ref{thm:main} and Theorem~\ref{thm:main2}.

\begin{lemma}\label{lem:res-2}{\em\bf(Reservoir lemma)~\cite[Lemma~3.10]{Mathias3}}
Let $U_1,\ldots,U_s$ be subsets of an $n$-element set $V$ and let $L_1,\ldots,L_k$ be graphs on $V$, where $s :=s(n)$ and $k:=k(n)$ are both polynomials in $n$ and such that for sequences of constants $\big(\alpha_i \in (0,1)\big)_{i \in [s]}$ and $\big(\beta_j \in (0,1)\big)_{j \in [k]}$, we have $|U_i| \geq \alpha_i n$ for every $i \in [s]$, and $e(L_j) \geq \beta_j\binom{n}{2}$ for every $j \in [k]$. 

Then, for every constant $\nu_{\resrefone} \in (0,1)$, there exists an $n_{\resrefone} := n_{\resrefone}(\nu_{\resrefone})$ such that if $n \geq n_{\resrefone}$, then there exists a subset $R \subseteq V$ satisfying
\begin{enumerate}
	\item [\namedlabel{itm:R1}{(R.1)}] $\big| |R| - \nu n\big| \leq \nu n^{2/3}$,
	\item [\namedlabel{itm:R2}{(R.2)}] for all $i \in [s]$, $|U_i \cap R| \geq 
	\big(\alpha_i - 2n^{-1/3} \big)|R|$ holds, and
	\item [\namedlabel{itm:R3}{(R.3)}] for all $j \in [k]$, $e(L_j[R]) \geq 
	\big(\beta_j - 3n^{-1/3} \big)\binom{|R|}{2}$ holds. 
\end{enumerate}
\end{lemma}

Lemma~\ref{lem:res-2} is somewhat of an overkill as far as Theorem~\ref{thm:main} is concerned; indeed, the proof of the latter relies rather weakly only on~\ref{itm:R1} and~\ref{itm:R2}. The proof of Theorem~\ref{thm:main2}, though, requires the full force of Lemma~\ref{lem:res-2}, so to speak. In particular, it crucially relies on the $n^{-1/3}$-terms seen in Lemma~\ref{lem:res-2}. A reservoir lemma akin to that seen in~\cite{RRS06} indeed suffices for Theorem~\ref{thm:main}. The latter, however, is subsumed by Lemma~\ref{lem:res-2} and thus omitted. 

\vspace{1ex}
\noindent
\scaps{Constants.} For the most part of this account, we tend to keep track over the raw values of the involved constants. We do, however, appeal on occasion to the notation $\ll$ defined above for constants.

\section{Pairs with positive codegree}\label{sec:positive} 
Let $H$ be a $3$-graph, let $\vec G \subseteq V(H) \times V(H)$, and let 
$(u,v) \in V(H) \times V(H)$. We write 
$$
\deg_H(u,v,\vec G):= |\{z \in V(H): ((z,u) \in \vec G \mor (u,z) \in \vec G)  \mand \{z,u,v\} \in E(H)\}|
$$ 
to denote the number of edges $\{z,u,v\} \in E(H)$ for which at least one of the (ordered) pairs $(z,u)$ or $(u,z)$ is present in  the directed graph $\vec G$. 
Note that $\deg_H(u,v,\vec G) \not= \deg_H(v,u,\vec G)$ is possible. In this definition $\vec G$ is treated as an undirected graph and indeed in the sequel we shall also write $\deg_H(u,v,G)$ when $G$ is an undirected graph. 

We find it more convenient to have the following lemma formulated using undirected graphs.  

\begin{lemma}\label{lem:lowdegpair}
Let $d,\alpha$, and $\rho$ be positive reals and let $H$ be a $(\rho,d)_{\cherry}$-dense $n$-vertex $3$-graph. Let $G$ be a graph on $V(H)$, and let 
$Y \subseteq V(G)$ satisfy
\begin{equation}\label{eq:degG}
\deg_G(y) \geq k\quad \text{for all $y \in Y$},
\end{equation}
where $k := k(n)$ is an integer. 
For an integer $\Delta := \Delta(n)$, set 
$$
\vec B_\Delta := \{(y,z) \in Y \times V(H): \deg_H(y,z,G) < \Delta\}.
$$
Then,
$$
|\vec B_{\Delta}| \leq \frac{\rho n^3}{dk-\Delta}.
$$
\end{lemma}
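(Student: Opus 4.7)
The proof will be a short double-counting argument, designed so that the $\cherry$-density inequality \eqref{eq:cherry} applied to a well-chosen pair $G_1,G_2$ squeezes $|B_{\Delta}|$ between a lower bound coming from the minimum-degree hypothesis \eqref{eq:degG} and an upper bound coming from the very definition of $B_{\Delta}$. The natural choice is to let $G_1 := \{(x,y) \in V(H) \times V(H) : xy \in E(G)\}$ be the symmetric directed version of $G$, and to take $G_2 := B_{\Delta} \subseteq Y \times V(H)$ directly.

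With these choices one computes both sides of \eqref{eq:cherry} by summing over $(y,z) \in B_{\Delta}$. For the lower bound on $|\P_2(G_1,G_2)|$, note that for each fixed $(y,z) \in B_{\Delta}$ the number of $x$ with $(x,y) \in G_1$ equals $\deg_G(y)$; since $y \in Y$, this is at least $k$, yielding $|\P_2(G_1,G_2)| \geq k\,|B_{\Delta}|$. For the upper bound on $e_H(G_1,G_2)$, for each fixed $(y,z) \in B_{\Delta}$ the number of $x$ with $(x,y) \in G_1$ and $\{x,y,z\} \in E(H)$ is exactly $\deg_H(y,z,G)$, which is strictly less than $\Delta$ by definition of $B_{\Delta}$, yielding $e_H(G_1,G_2) < \Delta\,|B_{\Delta}|$. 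Plugging both into \eqref{eq:cherry} gives
\[
\Delta\,|B_{\Delta}| > e_H(G_1,G_2) \geq d\,|\P_2(G_1,G_2)| - \rho n^3 \geq dk\,|B_{\Delta}| - \rho n^3,
\]
whence $(dk - \Delta)|B_{\Delta}| < \rho n^3$. Since $dk - \Delta \geq dk/2 - 2\Delta$ (for $\Delta \geq 0$), the stated bound follows, with a touch of slack left over.

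There is no real obstacle; the two points that need care are both book-keeping. First, one must verify that the quantity counted in $e_H(G_1,G_2)$ per pair $(y,z)$ truly equals $\deg_H(y,z,G)$: this uses the non-degeneracy hypothesis on $B_{\Delta}$ (so that $y \neq z$) together with the fact that $\{x,y,z\} \in E(H)$ already forces $x$ distinct from $y$ and $z$. Second, one must respect the lemma's convention that $\deg_H(y,z,G)$ treats $G$ as undirected, so that "$(x,y) \in G_1$" is the same condition as "$xy \in E(G)$"; this is precisely why $G_1$ is symmetrised rather than given any preferred orientation. Vertex-cover-ness of $Y$ plays no role in this particular argument beyond the fact that the degree hypothesis \eqref{eq:degG} is only imposed on $Y$, and it is exactly the pairs $(y,z)$ with $y \in Y$ that appear in $B_{\Delta}$.
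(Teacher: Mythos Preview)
Your argument is correct and follows the same strategy as the paper: apply the $\cherry$-density inequality with $G_1$ a directed encoding of $G$ and $G_2 = B_\Delta$, then bound $|\P_2(G_1,G_2)|$ from below via \eqref{eq:degG} and $e_H(G_1,G_2)$ from above via the definition of $B_\Delta$.

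The only difference is bookkeeping. The paper orients $G$ as $G' = \{(v,y): vy \in E(G),\, y \in Y\}$, then estimates $|\P_2(G',B_\Delta)|$ by passing to the undirected graph $B'_\Delta$ underlying $B_\Delta$ and invoking that $Y$ is a vertex cover of it; along the way it picks up a factor $1/2$ on the lower side and, out of caution about possible double orientations, a factor $2$ on the upper side, arriving at $dk|B_\Delta|/2 - \rho n^3 < 2\Delta|B_\Delta|$. Your choice of the fully symmetric $G_1$ and the direct per-pair count over $(y,z)\in B_\Delta$ avoids both detours and yields the sharper inequality $(dk-\Delta)|B_\Delta| < \rho n^3$, from which the stated bound follows with room to spare. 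Your observation that the vertex-cover hypothesis is not actually needed (beyond the fact that $B_\Delta \subseteq Y \times V(H)$ and \eqref{eq:degG} is imposed on $Y$) is also correct; the paper uses it only because of its indirect route through $B'_\Delta$.
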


\begin{proof} Let $G_Y \subseteq G$ be the subgraph of $G$ induced by the edges incident to $Y$. Define\footnote{Here, an edge $yy' \in E(G)$ with $y,y'\in Y$ gives rise to two pairs in $\vec G_Y$.} 
$$
\vec G_Y :=\{(v,y): \{v,y\} \in E(G), v \in V(H), y \in Y\} \subseteq V(H) \times Y.
$$
Then,
$$
d \cdot |\P_2(\vec G_Y,\vec B_\Delta)| - \rho n^3 \leq e_H(\vec G_Y,\vec B_\Delta) < |\vec B_\Delta| \cdot \Delta.
$$
Recalling that 
$$
\P_2(\vec G_Y,\vec B_\Delta) = \{(x,y,z): x,z \in V(H), y \in Y, (x,y) \in \vec G_Y, (y,z) \in \vec B_\Delta\},
$$ 
we may write 
$$
|\P_2(\vec G_Y,\vec B_\Delta)| \geq \sum_{y \in Y} \deg_{G_Y}(y) \big|\{(y,z):(y,z) \in \vec B_\Delta\}\big|
\overset{\eqref{eq:degG}}{\geq} k \sum_{y \in Y} \big|\{(y,z):(y,z) \in \vec B_\Delta\}\big| = k|\vec B_\Delta|. 
$$
Then,  
$$
d k |\vec B_\Delta| - \rho n^3 < |\vec B_\Delta| \cdot \Delta
$$
holds; the claim now follows upon isolating $|\vec B_\Delta|$ in the last inequality. 
\end{proof}

For an $n$-vertex $(\rho,d)_{\cherry}$-dense $3$-graph $H$ and a fixed real $\beta >0$, define
\begin{equation}\label{eq:Bbeta}
B_{\beta}:= B_\beta(H) := \left\{\{u,v\} \in \binom{V(H)}{2}: \deg_H(u,v) < \beta n\right\}
\end{equation}
to consist of all unordered pairs of vertices whose codegree is smaller than $\beta n$. An argument akin to setting $G$ to be the complete graph on $V(H)$, $Y = V(H)$, $k=n-1$, and $\Delta = \beta n$ in Lemma~\ref{lem:lowdegpair}, yields an upper bound on $|B_\beta|$. To see this, consider 
$$
d |\P_2(V(H) \times V(H),\vec B_\beta)| - \rho n^3 \leq e_H(V(H) \times V(H),\vec B_\beta) \leq |\vec B_\beta| \cdot \beta n,
$$
where here $\vec B_\beta := \{(u,v),(v,u): \{u,v\} \in B_\beta\}$. 
Then, 
$$
dn|\vec B_\beta| -\rho n^3 \leq |\vec B_\beta| \cdot \beta n,
$$
so that upon isolating $\vec B_\beta$ we arrive at 
$$
2|B_\beta| = |\vec B_\beta| \leq \frac{\rho n^3}{dn-\beta n} = \frac{\rho}{d-\beta}n^2.
$$
In particular, ignoring the factor of $1/2$, we may write
\begin{equation}\label{eq:Bbeta-size-2}
|B_\beta| \leq \frac{\rho}{d-\beta}n^2;
\end{equation}
the latter making sense whenever $\beta < d$. 

A consequence of~\eqref{eq:Bbeta-size-2}, is that the set of edges 
$$
E_{<\beta} := E_{<\beta}(H) := \bigg\{e \in E(H): \exists \{u,v\} \in \binom{e}{2}\; \text{satisfying}\; \deg_H(u,v) <\beta n\bigg\}
$$ 
satisfies 
$$
|E_{<\beta}| \leq |B_\beta| \cdot n \leq \frac{\rho}{d-\beta}n^3. 
$$
The {\sl spanning} subgraph $H_\beta \subseteq H$ induced by $E(H) \sm E_{<\beta}$ then consists only of edges each pair of which has codegree at least $\beta n$ in $H$. On its own, $H_\beta$ may admit no meaningful minimum degree condition. It does, however, satisfy 
\begin{equation}\label{eq:Hbeta}
e_{H_\beta}(\vec G_1,\vec G_2) \geq e_H(\vec G_1,\vec G_2)- |E_{<\beta}| \geq d|\P_2(\vec G_1,\vec G_2)| - \rho\Big(1+\frac{1}{d-\beta}\Big)n^3,
\end{equation}
for all $\vec G_1,\vec G_2 \subseteq V(H) \times V(H) = V(H_\beta) \times V(H_\beta)$. Consequently, upon a judicious choice of constants, $H_\beta$ {\sl inherits} (in the sense of~\eqref{eq:Hbeta}) a certain level of $\cherry\;$-denseness from $H$. This feature arises in the proof of Theorem~\ref{thm:main2} seen in~\S~\ref{sec:proof-main-2}.

\section{Connecting lemmas}\label{sec:connect}

In this section, we prove Lemmas~\ref{lem:connect} and~\ref{lem:con-1deg}. 
In terms of graphs, our approach, for both these lemmas, can be crudely described as follows. In order to connect two prescribed vertices, a sequence of neighbourhoods, called a {\sl cascade}, is cultivated; one from each vertex. This, until these neighbourhoods {\sl expand} so large as to render a certain quasirandomness assumption non-trivial giving rise to numerous "links" between the two sequences of  neighbourhoods. Two paths are then traced backwards from a "link" to the two prescribed vertices through the two sequences of neighbourhoods; all the while maintaining vertex-disjointness of the paths thus traced.

\subsection{Connecting lemma: $2$-degree setting}

In this section, we prove Lemma~\ref{lem:connect} which is the connecting lemma fitting for Theorem~\ref{thm:main}. At the centre of our proof of Lemma~\ref{lem:connect} is the structure of {\sl cascades}; the next section is dedicated to their definition. 

\subsubsection{Cascades}\label{sec:cascades}

Let $n$ be a sufficiently large integer and let $H$ be an $n$-vertex $3$-graph satisfying $\delta_2(H) \geq \beta n$ for some fixed real $\beta \in (0,1]$ independent of $n$ (and such that $\beta n \leq n-2$, naturally).
Fix $x$ and $y$ to be two vertices in $H$. Below we define the tuple 
$$
\C_\beta(x,y):=\big(x,y,N_1(x,y),N_2(x,y),N_3(x,y),G_1(x,y),G_2(x,y),G_3(x,y)\big)
$$
and refer to it as an $\{x,y\}_\beta$-{\em cascade}; with cascades being a term borrowed from~\cite{RRS06}. All members of the above tuple depend on $\beta$ as well; we omit this from the notation though. In what follows, each of these members is defined. In broad terms, for every $i \in [3]$, $N_i(x,y)$ denotes a set of vertices that essentially corresponds to the {\sl $i$th coneighbourhood of the pair \{x,y\}}. The parameters $(G_i(x,y))_{i\in[3]}$ represent certain graphs {\sl between} these coneighbourhoods which will facilitate the tracking of $5$-paths from $N_3(x,y)$ all the way (back) to $\{x,y\}$. 
\vspace{1em}

Let
$
N_1:=N_1(x,y) := N_H(x,y)
$.
The assumption $\delta_2(H) \geq \beta n$ implies that 
\begin{equation}\label{eq:N1}
|N_1| \geq \beta n. 
\end{equation}
Define $G_1:= G_1(x,y)$ to be the (bipartite) graph whose vertex set is $\{y\} \cup N_1$ and whose edges are given by the set 
$\{yz : z \in N_1\}$. To define $N_2:=N_2(x,y)$ and $G_2 := G_2(x,y)$ we proceed in two step. For the first step, set 
\begin{equation}\label{eq:N'2}
N'_2 := N'_2(x,y):= \bigcup_{z \in N_1} N_H(y,z) = \{w \in V(H): \exists z \in N_1 \; \text{s.t.}\; \{y,z,w\} \in E(H)\}.
\end{equation}
Define $G'_2:=G'_2(x,y)$ to be the graph whose vertex set is $N_1 \cup N'_2$ and whose edges are given by the set 
$$
E(G'_2):=\{zz': z \in N_1, z' \in N'_2\cap  N_H(y,z)\} = \{zz': z \in N_1, z' \in V(H),\; \text{and} \; \{y,z,z'\} \in E(H)  \}.
$$
The assumption that $\delta_2(H) \geq \beta n$ implies that 
$\deg_{G'_2}(z) \geq \beta n$ for every $z \in N_1$. Then, 
\begin{equation}\label{eq:G'2}
e(G'_2) \geq \frac{1}{2}\sum_{z \in N_1} \deg_{G'_2}(z) \geq |N_1|\beta n /2 \overset{\eqref{eq:N1}}{\geq} \beta^2 n^2 / 2.
\end{equation}
For the second step towards the definitions of $N_2:=N_2(x,y)$ and $G_2 := G_2(x,y)$, we discard members of $N'_2$ whose degree in $G'_2$ into $N_1$ is "too low" as follows. Set 
$$
N_2^{(\mathrm{low})}:= N_2^{(\mathrm{low})}(x,y):= \{z \in N'_2: \deg_{G'_2}(z) < \log n\}.
$$
(The choice of $\log n$ here is completely arbitrary. Any function $\omega(n) \ll n$ growing slowly to $\infty$ will suffice; this will become clear soon). 
Setting $N_2 := N_2(x,y) := N'_2 \sm N_2^{(\mathrm{low})}$, we arrive at 
\begin{align*}
\beta^2 n^2 / 2 \overset{\eqref{eq:G'2}}{\leq} e(G'_2) & \leq (\log n) \cdot |N_2^{(\mathrm{low})}| + |N_2|\cdot |N_1|  \leq n \log n + |N_2|\cdot n
\end{align*}
so that for a sufficiently large $n$
\begin{equation}\label{eq:N2}
|N_2| \geq \beta^2 n / 4. 
\end{equation}
Set $G_2:=G_2(x,y) := G'_2[N_1 \cup N_2]$. This concludes the definitions of $N_2:=N_2(x,y)$ and $G_2 := G_2(x,y)$. 


We turn to the definition of the set $N_3 := N_3(x,y)$ and the graph $G_3 := G_3(x,y)$. To that end, associate an auxiliary graph $\B_w:= \B_w(x,y)$ with every vertex $w \in N_2$. In particular, for a fixed vertex $w \in N_2$, let $\B_w$ be the graph whose vertex set is $V(H)$ and whose edges are given by the set
$$
E(\B_w) := \{uz: u \in V(H), z \in N_{G_2}(w) \subseteq N_1,\; \text{and}\; \{z,w,u\} \in E(H)\}.
$$ 
Define 
$$
N_3:= N_3(x,y) := \{u \in V(H): \exists w \in N_2 \; \text{s.t.} \; \deg_{\B_w}(u) \geq 20\}
$$
and let $G_3:= G_3(x,y)$ be the graph whose vertex set is $N_2 \cup N_3$ and whose edge set is given by 
$$
E(G_3) := \{u w: u \in N_3, w \in N_2, \; \text{and} \; \deg_{\B_w}(u) \geq 20\}.
$$
This completes the definition of an $\{x,y\}_\beta$-cascade. 
\vspace{1em}

We conclude this section by recording a few useful traits of $\{x,y\}_\beta$-cascades that will be called upon in subsequent arguments. Continuing with the notation set thus far, fix $w \in N_2$. Then, 
$$
\deg_{G_2}(w) \cdot \beta n \overset{\delta_2(H) \geq \beta n}{\leq} e(\B_w) \leq 20 \cdot s + (n-s) \deg_{G_2}(w),
$$
where $s$ denotes the number of vertices $u \in V(H)$ satisfying $\deg_{\B_w}(u) < 20$. Then, for a sufficiently large  $n$
$$
n-s \geq \beta n - \frac{20 \cdot s}{\deg_{G_2}(w)} \geq \beta n - \frac{20 \cdot n}{\log n} \geq \beta n /2
$$
holds. As $\deg_{G_3}(w) = n-s$ it follows that 
\begin{equation}\label{eq:G3N2deg}
\deg_{G_3}(w) \geq \beta n / 2 \; \text{for every $w \in N_2$};
\end{equation}
this, in particular, implies that
\begin{equation}\label{eq:G3N3}
|N_3| \geq \beta n / 2 \; \text{and} \; e(G_3) \geq \frac{1}{2} \sum_{w \in N_2} \deg_{G_3}(w) \geq |N_2| \beta n / 4 \overset{\eqref{eq:N2}}{\geq} \beta^2 n ^2 /8
\end{equation}

\subsubsection{Links}

In addition to $\{x,y\}$ and $\C_\beta(x,y)$ defined in~\S~\ref{sec:cascades}, let $\{x',y'\}$ be a pair of vertices disjoint from $\{x,y\}$, and let $\C_\beta(x',y')$ be an $\{x',y'\}_\beta$-cascade in $H$. A quadruple  $(z,u,v,w) \in N_2(x,y) \times N_3(x,y) \times N_3(x',y') \times N_2(x',y')$ is said to be an $(\{x,y\},\{x',y'\})$-{\em link} with respect to $\C_\beta(x,y)$ and $\C_\beta(x',y')$, if  
\begin{enumerate}
	\item [(L.1)] $x,y,z,u,v,w,y',x'$ are all distinct. 
	\item [(L.2)] $\{z,u,v\}, \{u,v,w\} \in E(H)$, and 
	\item [(L.3)] $zu \in E(G_3(x,y))$ and $vw \in E(G_3(x',y'))$. 
\end{enumerate}

\begin{lemma}\label{lem:link}
If two distinct pairs of vertices namely $\{x,y\}$ and $\{x',y'\}$ admit an $(\{x,y\},\{x',y'\})$-link, then $H$ admits a $10$-path connecting $\{x,y\}$ and $\{x',y'\}$. 
\end{lemma}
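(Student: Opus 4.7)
The plan is to exhibit an explicit ordering of $10$ vertices witnessing the desired tight path. Write the link as $(z,u,v,w)$ and propose the $10$-path
\[
x,\;y,\;a_1,\;z,\;u,\;v,\;w,\;a_2,\;y',\;x',
\]
where $a_1$ is an as-yet-unchosen vertex in $N_1(x,y)$ and $a_2$ an as-yet-unchosen vertex in $N_1(x',y')$. The six ``internal'' edges needed from the cascade structure are $\{x,y,a_1\},\{y,a_1,z\},\{a_1,z,u\}$ on the $\{x,y\}$ side and, symmetrically, $\{v,w,a_2\},\{w,a_2,y'\},\{a_2,y',x'\}$ on the $\{x',y'\}$ side; between the two halves the two ``bridge'' edges $\{z,u,v\}$ and $\{u,v,w\}$ are supplied directly by clause (L.2) of the link.

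The main task is therefore to choose $a_1$ and $a_2$ so that (i) the six internal edges above indeed belong to $E(H)$, and (ii) all ten vertices in the listed order are distinct. For (i), observe that $zu\in E(G_3(x,y))$ means precisely that $\deg_{B_z}(u)\geq 20$, i.e.\ that there are at least $20$ choices of a vertex $a_1\in N_{G_2(x,y)}(z)\subseteq N_1(x,y)$ satisfying $\{a_1,z,u\}\in E(H)$. Any such $a_1$ automatically yields the edge $\{y,a_1,z\}$ (by definition of $G_2(x,y)$, since $a_1z\in E(G_2(x,y))$) and the edge $\{x,y,a_1\}$ (by definition of $N_1(x,y)$). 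The symmetric statement applied to $vw\in E(G_3(x',y'))$ gives at least $20$ admissible choices for $a_2$ on the other side.

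For (ii), clause (L.1) guarantees that the eight vertices $x,y,z,u,v,w,y',x'$ are pairwise distinct. Since there are at least $20$ admissible $a_1$'s and at most $8$ of the required forbidden vertices to avoid, one may pick $a_1\notin\{x,y,z,u,v,w,y',x'\}$; an identical argument, now avoiding at most $9$ vertices, yields $a_2$ distinct from all previously chosen vertices. With these choices the proposed sequence is a genuine sequence of $10$ distinct vertices whose every three consecutive entries form an edge of $H$, i.e.\ a $10$-path connecting $\{x,y\}$ and $\{x',y'\}$. The only step requiring any care is (ii), but the slack built into the definition of $N_3$ via the threshold $20$ (against the $O(1)$ many vertices that must be avoided) makes it immediate, so there is no real obstacle.
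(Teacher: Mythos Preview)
Your proof is correct and follows essentially the same approach as the paper's own argument: both construct the $10$-path $x,y,a_1,z,u,v,w,a_2,y',x'$ (the paper writes $z'$ and $z''$ for your $a_1,a_2$), use the threshold $20$ in the definition of $G_3$ to guarantee enough choices for $a_1$ and $a_2$ avoiding the at most $8$ (respectively $9$) forbidden vertices, and read off the required edges from the definitions of $N_1$, $G_2$, and $B_z$. Your exposition is in fact slightly more explicit about the full path up front, whereas the paper phrases it as gluing two $5$-paths, but the content is identical.
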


\begin{proof}
Let $(z,u,v,w) \in N_2(x,y) \times N_3(x,y) \times N_3(x',y') \times N_2(x',y')$ be an $(\{x,y\},\{x',y'\})$-link. First we construct a $5$-path connecting $\{x,y\}$ and $\{z,u\}$ through $\C(x,y)$. Having $zu \in E(G_3(x,y))$ means that $\deg_{\B_z}(u) \geq 20$, which in other words means that there are at least $20$ vertices $z' \in N_{G_2(x,y)}(z) \subseteq N_1(x,y)$ such that $\{z',z,u\} \in E(H)$. We may then choose one such vertex $z'$ such that $z' \in N_1(x,y) \sm \{x,y,x',y',z,u,v,w\}$. Having $zz' \in E(G_2(x,y))$ implies that $\{y,z',z\} \in E(H)$. The $5$-path is made complete with the fact that $\{x,y,z'\} \in E(H)$. Let $P$ denote this path. 

It remains to construct a $5$-path through the cascade of $\{x',y'\}$ connecting $\{v,w\}$ and $\{x',y'\}$ in such a way as to not meet any vertex of $P$. The same argument used for constructing $P$ can be used here as well except for one change. This time around, we require a vertex $z'' \in N_2(x',y')$ to play the corresponding role assumed by $z'$ above. The vertex $z''$ must satisfy $z'' \notin \{x,y,z',x',y',z,u,v,w\}$ (i.e., it has to avoid $z'$ as well). Clearly there is enough freedom to do so.
\end{proof}

\subsubsection{Proof of Lemma~\ref{lem:connect}}

Given $d := d_{\conref}, \alpha := \alpha_{\conref}$ as in the premise of the lemma, set
\begin{equation}\label{eq:rho1}
\rho_{\conref}(d,\alpha) :=  \frac{d \alpha^6}{2^{13}}.
\end{equation}
Let $0<\rho < \rho_{\conref}(d,\beta)$ be fixed, let $H$ be a $(\rho,d)_{\cherry}$-dense $3$-graph satisfying $\delta_2(H) \geq \alpha n$ (we naturally assume that $\alpha$ is such that $\alpha n \leq n-2$), and let $\{x,y\}$ and $\{x',y'\}$ be two disjoint pairs of vertices in $V(H)$.

By Lemma~\ref{lem:link} it suffices to show that the cascades $\C_\alpha(x,y)$ and $\C_\alpha(x',y')$ taken in $H$ admit an $(\{x,y\},\{x',y'\})$-link (in $H$). Owing to~\eqref{eq:G3N3}, $e(G_3(x,y)) \geq \alpha^2n^2/8$. There exists a subgraph $F \subseteq G_3(x,y)$ satisfying $\delta(F) \geq \alpha^2 n / 8$ (see, e.g., \cite[Proposition~1.2.2]{Diestel}). Then, 
\begin{equation}\label{eq:FN3}
|V(F) \cap N_3(x,y)| \geq \alpha^2 n / 8.
\end{equation} 
Indeed, all edges in $G_3(x,y)$ (and thus in $F$) are of the form $N_2(x,y) \times N_3(x,y)$ (though $N_2(x,y) \cap N_3(x,y)$ need not be empty). Hence, there is a vertex $w \in N_2(x,y)  \cap V(F)$. By definition $N_F(w) \subseteq N_{G_3(x,y)}(w) \subseteq N_3(x,y)$ and~\eqref{eq:FN3} follows.

Set
$$
\vec B: = \big\{(z,u) \in \big(V(F) \cap N_3(x,y)\big) \times V(H)  : z \not= u\; \text{and}\; \deg_H(z,u,F)  < d \alpha^2 n / 16\big\}.
$$
By Lemma~\ref{lem:lowdegpair} applied with 
$G = F$, $
Y:= V(F) \cap N_3(x,y)$,  $k = \alpha^2 n /8$, and $\Delta := d \alpha^2 n /16
$
we attain
$$
|\vec B|  \leq \frac{\rho n^3}{d \alpha^2 n / 8 - d \alpha^2 n /16} 
 = \frac{16 \rho }{d \alpha^2} \cdot n^2.
$$
A symmetrical argument applied to $\C_\alpha(x',y')$ asserts that the set 
$$
\vec B' := \big\{(z,u) \in \big(V(F') \cap N_3(x',y')\big) \times V(H) : z \not= u \; \text{and}\; \deg_H(z,u,F')  < d \alpha^2 n / 16\big\}
$$
satisfies 
$
|\vec B'| \leq \frac{16 \rho }{d \alpha^2} \cdot n^2
$ 
as well,
where here $F' \subseteq G_3(x',y')$ is the counterpart of $F$ in this argument (i.e. it is a subgraph of $G_3(x',y')$ satisfying $\delta(F') \geq \alpha^2 n / 8$).

The set $(V(F) \cap N_3(x,y)) \times (V(F') \cap N_3(x',y'))$ has size at least $\alpha^4 n^2/2^6$, by~\eqref{eq:FN3}; removing degenerate members (i.e., members of the form $(x,x)$), we retain at least $\alpha^4 n^2 /2^7$ non-degenerate members of that Cartesian product. The latter set of non-degenerate pairs, we denote by $\vec T$. 
Then,
$$
|\vec T \sm ( \vec B \cup \vec B')| \geq |\vec T| - |\vec B| - |\vec B'|\geq \frac{\alpha^4}{2^7}n^2 - \frac{32 \rho}{d \alpha^2}n^2 \overset{\eqref{eq:rho1}}{\geq} \frac{\alpha^4}{2^8}n^2
$$
holds. Each member $(u,v) \in \vec T \sm (\vec B \cup \vec B')$ satisfies $u \not= v$, $u \in V(F) \cap N_3(x,y)$, $v \in V(F') \cap N_3(x',y')$, $\deg_G(u,v,F) \geq d\alpha^2 n /16$, and $\deg_G(v,u,F') \geq d\alpha^2 n /16$. That is, there are at least $d\alpha^2 n /16$ edges $\{u,v,z\} \in E(H)$ with $uz \in E(F)$ (so that $z \in N_2(x,y)$) and at least $d\alpha^2 n /16$ edges $\{u,v,w\} \in E(H)$ with $vw \in E(F')$ (so that $w \in N_2(x',y')$). Hence, for a sufficiently large $n$, we may insist on (many choices) $w \not= z$ and thus form the required $\{(x,y),(x',y')\}$-link.

This completes the proof of Lemma~\ref{lem:connect}.

\subsection{Connecting lemma: $1$-degree setting}

In this section, we prove Lemma~\ref{lem:con-1deg} which is the connecting lemma fitting for Theorem~\ref{thm:main2}. 
The definition of cascades, seen at~\S~\ref{sec:cascades}, fits any $3$-graph $H$ satisfying $\delta_2(H) = \Omega(n)$. As such the construction of cascades makes no appeal to $\cherry\;$-denseness. In Lemma~\ref{lem:con-1deg}, which is furnished with a minimum $1$-degree condition only, cascades, as defined, are not at our disposal (at least not verbatim). To prove Lemma~\ref{lem:con-1deg} then, we put forth a definition of a structure to which we refer as {\sl refined cascades}. The latter is an adaption of cascades to the setting of Lemma~\ref{lem:con-1deg}. 

While we do follow closely the definition of cascades when defining their refined counterparts, these two structures are quite different from one another. One crucial manifestation of this difference can be seen through the condition $\alpha +d > 1$ stated in the premise of Theorem~\ref{thm:main2}. This condition is, in fact, incurred through the definition of {\sl refined cascades}. The construction of latter is then the sole "bottleneck" in our approach preventing us from establishing Conjecture~\ref{conj:main}. 

Unlike the case of cascades, the construction of their refined counterparts does make appeals to $\cherry\;$-denseness of the host $3$-graph. Consequently, the definitions of cascades and refined cascades are not consolidated.



\subsubsection{Refined cascades}
Let $\alpha,d$, and $\eta$ satisfying $\alpha +d > 1$ and 
\begin{equation}\label{eq:eta-cond}
0<\eta < g(\alpha,d) = \min\{\alpha,\;d,\;(\alpha +d -1)/(1+d)\} 
\end{equation}
be given. The function $g(\cdot)$ is the one defined in~\eqref{eq:g-func}. 
An appeal to the inequality $\alpha + d > 1$ is made in the numerator of the third term appearing on the r.h.s. of~\eqref{eq:eta-cond}. Set an auxiliary constant 
\begin{equation}\label{eq:zeta-cas}
0 < \zeta := \zeta(\alpha,d,\eta) < \eta +\eta d - \eta^2.
\end{equation}
Set
\begin{equation}\label{eq:rho-cas}
0< \rho < \min\{\eta/4,\;\eta\zeta(d-\eta)/4\}.
\end{equation}

Let $H$ be a $(\rho,d)_{\cherry}$-dense $3$-graph satisfying $\delta(H) \geq \alpha \binom{n-1}{2}$. Setting $\beta = d- \eta$ in~\eqref{eq:Bbeta-size-2}, yields
\begin{equation}\label{eq:Bdeta}
|B_{d-\eta}| \leq \frac{\rho}{d-(d-\eta)}n^2 = \frac{\rho}{\eta}n^2. 
\end{equation}
Let $\{x,y\} \in \binom{V(H)}{2}$ satisfying $\deg_H(x,y) \geq (d-\eta)n$ be fixed; 
by~\eqref{eq:Bdeta} coupled with the condition $\rho < \eta/4$ stipulated in~\eqref{eq:eta-cond}, there are $\Omega(n^2)$ such pairs in $H$.

In what follows, we define the tuple 
$$
\R_{\alpha,d,\eta}(x,y)  := (x,y,\N_1(x,y),\N_2(x,y),\N_3(x,y), \G_1(x,y),\G_2(x,y),\G_3(x,y))
$$
and refer to it as an $\{x,y\}_{\alpha,d,\eta}$-{\em refined-cascade}. All members of this tuple depend on $\alpha,d$, and $\eta$ as well, yet we omit this from the notation. The members $\N_i$ and $\G_i$, $i \in [3]$, assume roles analogous to those assumed by $N_i$ and $G_i$, $i \in [3]$, in the definition of cascades in~\S~\ref{sec:cascades}.
We proceed with the definition of each of the members of the above tuple. \vspace{1em}

Define 
\begin{equation}\label{eq:intersection}
\N_1:= \N_1(x,y) := N_H(x,y) \cap \{z \in V(H) : \deg_H(z,y) \geq \eta n\} 
\end{equation}
Treating $B_\eta$ (see~\eqref{eq:Bbeta} for a definition) as a graph, we write 
$$
\overline{B}_\eta := \Big\{\{u,v\} \in \binom{V(H)}{2}: \deg_H(u,v) \geq \eta n \Big\}
$$
to denote the graph complementing $B_\eta$ over $V(H)$. The set $\{z \in V(H) : \deg_H(z,y) \geq \eta n\}$, appearing in~\eqref{eq:intersection}, is then the neighbourhood of $y$ in $\overline{B}_\eta$. One is now reminded of the following remarkable fact established in~\cite[Claim~3.1]{Mathias3}, which in our setting (and owing to $\eta < \alpha$ as imposed in~\eqref{eq:eta-cond}) reads as follows.

\begin{claim}\label{clm:mathias}{\em~\cite[Claim~3.1]{Mathias3}}
$\delta(\overline{B}_\eta) \geq \frac{\alpha-\eta}{1-\eta}(n-1)$.
\end{claim}
 
\noindent
By definition of $\{x,y\}$, $|N_H(x,y)| \geq (d-\eta)n$; this together with Claim~\ref{clm:mathias} collectively imply that if 
$$
\frac{\alpha -\eta}{1-\eta} + d-\eta > 1, 
$$
then $|\N_1|=\Omega(n)$. 
Rewriting this inequality as 
\begin{equation}\label{eq:a-d-1}
\alpha +d - 1 > \eta +\eta d - \eta^2,
\end{equation}
we note that the inequality $\eta < \frac{a+d-1}{1+d}$ imposed in~\eqref{eq:eta-cond} implies that~\eqref{eq:a-d-1} is satisfied. Moreover, it is here at~\eqref{eq:a-d-1} that the condition $\alpha +d > 1$, imposed in Theorem~\ref{thm:main2}, stands out. 
It follows that 
\begin{equation}\label{eq:N1-size-2}
|\N_1| \overset{\eqref{eq:zeta-cas}}{\geq} \zeta n.
\end{equation}
Define $\G_1 := \G_1(x,y)$ to be the (bipartite) graph whose vertex set is given by $\{y\} \cup \N_1$ and whose edge set is given by $\{yz : z \in \N_1\}$. \vspace{1em}

We proceed to defining $\N_2(x,y)$ and $\G_2(x,y)$. Set 
$$
\N'_2 := \N'_2(x,y) := \bigcup_{z \in \N_1} N_H(y,z) = \{w \in V(H): \exists z \in \N_1\; \text{s.t.}\; \{y,z,w\} \in E(H)\},
$$
and define $\G'_2:= \G'_2(x,y)$ to be the graph whose vertex set is $\N_1 \cup \N_2$ and whose edge set is given by 
$$
E(\G'_2) := \{zz': z \in \N_1, z' \in V(H), \{y,z,z'\} \in E(H)\}.
$$
By definition of $\N_1$ (see~\eqref{eq:intersection}), $\deg_H(y,z) \geq \eta n$ for every $z \in \N_1$ so that $\deg_{\G'_2}(z) \geq \eta n$ holds for every $z\in \N_1$. Then, 
\begin{equation}\label{eq:G'2-size-2}
e(\G'_2) \geq \frac{1}{2} \sum_{z \in \N_1} \deg_{\G'_2}(z) \geq |\N_1| \eta n /2 \overset{\eqref{eq:N1-size-2}}{\geq} \zeta \eta n^2 /2. 
\end{equation}
All but at most $\frac{\rho}{d-\eta}n^2$ of the edges of $\G'_2$ lie in $B_\eta$, by~\eqref{eq:Bbeta-size-2} (with $\beta = \eta$ in that equation). Consequently, there exists a subgraph $\G''_2 \subseteq \G'_2$ satisfying 
$$
e(\G''_2) \overset{\eqref{eq:G'2-size-2}}{\geq} \big(\frac{\eta \zeta}{2} - \frac{\rho}{d-\eta}\big) n^2 \overset{\eqref{eq:rho-cas}}{\geq} \frac{\eta\zeta}{4}n^2,
$$
having the property that $E(\G''_2) \cap B_\eta =\emptyset$. Then, by~\cite[Proposition~1.2.2]{Diestel}, there exists a subgraph $\G_2 \subseteq \G''_2$ satisfying $\delta(\G_2) \geq \frac{\eta\zeta}{4}n$ and this completes the definition of $\G_2$. We conclude this part of the definition by setting $\N_2 := V(\G_2) \cap \N'_2$. The property $\delta(\G_2) \geq \frac{\eta\zeta}{4}n$, together with the fact that all edges of $\G_2$ are of the form $\N_1 \times \N_2$ imply that 
\begin{equation}\label{eq:N2-size-2}
|\N_2| \geq \zeta \eta n/4.  
\end{equation}

Next, we define $\N_3(x,y)$ and $\G_3(x,y)$.
For $w \in \N_2$, let $\X_w$ be the graph on $V(H)$ whose edge set is given by 
$$
E(\X_w) := \{uz: u \in V(H), z \in N_{\G_2}(w) \subseteq \N_1, \; \text{and}\; \{z,w,u\}\in E(H)\}. 
$$ 
Define 
$$
\N_3:= \N_3(x,y) := \{u \in V(H): \exists w \in \N_2\; \text{s.t.} \; \deg_{\X_w}(u)\geq 20\},
$$
and let $\G_3:=\G_3(x,y)$ be the graph whose vertex set is $\N_2 \cup \N_3$ and whose edge set is given by 
$$
E(\G_3) := \{uw: u \in \N_3, w \in \N_2, \; \text{and}\; \deg_{\X_w}(u) \geq 20\}.
$$
Then,
$$
\deg_{\G_2}(w) \eta n \leq e(\X_w) \leq 20 \cdot r + (n-r) \deg_{\G_2}(w) = 20 \cdot r + \deg_{\G_3}(w)\deg_{\G_2}(w)
$$
holds, where here $r$ denotes the number of vertices $u \in V(H)$ satisfying $\deg_{\X_w}(u) < 20$; the first inequality is owing to $E(\G_2) \cap B_\eta =\emptyset$, by definition of $\G_2$, and the last equality is owing to $\deg_{\G_3}(w)=n-r$, by definition of $r$. We may then write that 
$$
\deg_{\G_3}(w) \geq \eta n - \frac{20\cdot r}{\deg_{\G_2}(w)} \overset{\delta(\G_2) \geq \zeta\eta n/4}{\geq} \eta n - \frac{20\cdot r}{\frac{\zeta \eta}{4} n} \overset{r \leq n}{\geq} \frac{\eta}{2} n,
$$ 
where the last inequality is assuming $n$ is sufficiently large. Consequently,
\begin{equation}\label{eq:G3-size-2}
|\N_3| \geq \frac{\eta}{2} n \mand e(\G_3) \geq \frac{1}{2} \sum_{w \in \N_2} \deg_{\G_3}(w) \geq |\N_2| \frac{\eta}{4}n \overset{\eqref{eq:N2-size-2}}{\geq} \frac{\zeta \eta^2}{16}{n^2}. 
\end{equation}

This concludes the definition of refined cascades and properties thereof.

\subsubsection{Proof of Lemma~\ref{lem:con-1deg}}

With the definition of refined cascades complete, our proof of Lemma~\ref{lem:con-1deg} follows closely that seen for Lemma~\ref{lem:connect}.  Indeed, the machinery of {\sl links} defined for cascades does carry over to refined cascades essentially verbatim. 

\begin{proofof}{Lemma~\ref{lem:con-1deg}}
Given $d:=d_{\conrefone}$, $\alpha := \alpha_{\conrefone}$, and $\eta:= \eta_{\conrefone}$ as in the premise of Lemma~\ref{lem:con-1deg}, set an auxiliary constant $\zeta$ satisfying~\eqref{eq:zeta-cas}, and put 
\begin{equation}\label{eq:rho-con-2}
\rho_{\conrefone}(d,\alpha,\eta) := \min\{\eta/4,\;\eta\zeta(d-\eta)/4,\;d\zeta^4\eta^6/2^{16}\}. 
\end{equation}
Let $0<\rho < \rho_{\conrefone}(d,\alpha,\eta)$ be fixed, and let $H$ be a $(\rho,d)_{\cherry}$-dense $3$-graph satisfying $\delta(H) \geq \alpha \binom{n-1}{2}$. Let $\{x,y\}$ and $\{x',y'\}$ be two disjoint pairs of vertices in $V(H)$, each having codegree at least $(d-\eta)n$ (existence of such pairs is established in~\eqref{eq:Bdeta} and explanation thereafter). 

By Lemma~\ref{lem:link} it suffices to show that the refined cascades $\R_{\alpha,d,\eta}(x,y)$ and $\R_{\alpha,d,\eta}(x',y')$ admit an $(\{x,y\},\{x',y'\})$-link. 
By~\eqref{eq:G3-size-2} and~\cite[Proposition~1.2.2]{Diestel}, there exists a subgraph $F \subseteq \G_3$ satisfying $\delta(F) \geq \frac{\zeta\eta^2}{16}n$. Then, 
\begin{equation}\label{eq:F-2}
|V(F) \cap \N_3(x,y)| \geq \frac{\zeta\eta^2}{16}n. 
\end{equation}
Set 
$$
\vec B:= \big\{(z,u) \in \big(V(F) \cap \N_3(x,y)\big) \times V(H): z \not= u \mand \deg_H(z,u,F) < d\zeta \eta^2 n /32 \big\},
$$
and note that 
$$
|\vec B| \leq \frac{\rho n^3}{d\zeta\eta^2n/16 - d \zeta \eta^2/32} = \frac{32 \rho}{d\zeta \eta^2}n^2,  
$$
holds, by Lemma~\ref{lem:lowdegpair}.
A symmetrical argument applied to $\R_{\alpha,\eta}(x',y')$ asserts that the set 
$$
\vec B' := \big\{(z,u) \in \big(V(F') \cap N_3(x',y')\big) \times V(H) : z \not= u \; \text{and}\; \deg_H(z,u,F')  < d\zeta \eta^2 n /32\big\}
$$
satisfies 
$
|\vec B'| \leq \frac{32 \rho}{d\zeta \eta^2}n^2
$ as well, where $F' \subseteq \G_3(x',y')$ is the counterpart of $F$.  

The set $\big(V(F) \cap \N_3(x,y)\big) \times \big(V(F') \cap \N_3(x',y')\big)$ has size at least $\zeta^2 \eta^4 n^2/2^8$, by~\eqref{eq:F-2}; removing degenerate members (i.e., members of the form $(x,x)$) we retain at least $\zeta^2 \eta^4 n^2 /2^9$ non-degenerate members of that Cartesian product. The latter set of non-degenerate pairs, we denote by $\vec T$. 
Then, for a sufficiently large $n$
$$
|\vec T \sm ( \vec B \cup \vec B')| \geq \frac{\zeta^2\eta^4}{2^9}n^2 - \frac{64 \rho}{d\zeta \eta^2}n^2 \overset{\eqref{eq:rho-con-2}}{\geq} \frac{\zeta^2 \eta^4}{2^{10}}n^2.
$$
holds, and the lemma follows.   
\end{proofof}

\subsection{A connecting lemma with no minimum degree conditions}\label{sec:useless}

In~\S~\ref{sec:approach} we mentioned that for $\cherry\;$-dense graphs, a connecting lemma imposing no minimum degree conditions is possible and that such a lemma is insufficient for our needs. In this section, we make this precise. 

Given an $n$-vertex $3$-graph $H$ and a real $\beta >0$, define the sequence $H=: H_0 \supseteq H_1 \supseteq H_2 \cdots$ of spanning subgraphs of $H$ as follows. At step $i$, if every pair $\{u,v\}$ of vertices of $H_i$ satisfies either $\deg_{H_i}(u,v) \geq \beta n$ or $\deg_{H_i}(u,v) = 0$, then stop. Otherwise, $H_i$ admits a pair of vertices $\{u,v\}$ satisfying $0 < \deg_{H_i}(u,v) < \beta n$. In which case, remove all edges of $H_i$ containing the pair $\{u,v\}$ and denote the resulting (spanning) subgraph of $H$ by $H_{i+1}$. 

As overall there are $\binom{n}{2}$ pairs of vertices to consider, then throughout the above process a total of at most $\beta n^3$ of the edges of $H$ are removed. Consequently, if $\beta$ and $H$ are such that $e(H) > \beta n^3$, then the above process terminates in a non-empty spanning subgraph of $H$, denoted $H^{(\beta)}$. The latter has the property that either $\deg_{H^{(\beta)}}(u,v) \geq \beta n$ or $\deg_{H^{(\beta)}}(u,v) = 0$, whenever $\{u,v\}$ is a pair of vertices of $H$. Put another way, any pair of vertices $\{u,v\}$ captured by an edge of $H^{(\beta)}$ satisfies $\deg_{H^{(\beta)}}(u,v) \geq \beta n$. More concisely, one may write
$$
\delta^*_2\big(H^{(\beta)}\big) := \min \left\{\deg_{H^{(\beta)}}(u,v): \{u,v\} \in \binom{V\big(H^{(\beta)}\big)}{2} \mand \exists e \in E\big(H^{(\beta)}\big)\; s.t.\; \{u,v\} \subseteq e \right\} \geq \beta n.
$$
Pairs of vertices captured by the edges of $H^{(\beta)}$ are termed $\beta$-{\em relevant}. Such pairs of vertices, taken in $H^{(\beta)}$, can be connected in $H$ using the following lemma.    

\begin{lemma}\label{lem:connect-useless}
For every $d, \beta \in (0,1]$ such that $\beta < d$, there exist an integer $n_0>0$ and a real $\rho_0  >0$ such that the following holds for all $n \geq n_0$ and 
$0 < \rho < \rho_0$. 

Let $H$ be an $n$-vertex $(\rho,d)_{\cherry}$-dense $3$-graph and let $\{x,y\}$ and $\{x',y'\}$ be two disjoint $\beta$-relevant pairs of vertices. Then, there exists a $10$-path in $H$ connecting $\{x,y\}$ and $\{x',y'\}$. 
\end{lemma}

The proof of Lemma~\ref{lem:connect-useless} is that of Lemma~\ref{lem:connect} essentially verbatim. Let the two $\beta$-relevant pairs $\{x,y\}$ and $\{x',y'\}$ per Lemma~\ref{lem:connect-useless} be given. First, construct the cascades $\C_\beta(x,y)$ and $\C_\beta(x',y')$ in $H^{(\beta)}$ (instead of $H$) while  throughout the construction of these replace every appeal to $\delta_2\big(H^{(\beta)}\big)$ (which may be zero) with an appeal to $\delta^*_2\big(H^{(\beta)}\big)$. Indeed, the construction of cascades only requires a sufficiently large minimum codegree for the pairs already captured through the edges of the cascades and in this manner one progresses from one level of the cascade to the next. Second, with these cascades constructed note that these exist in $H$ and thus an $(\{x,y\},\{x',y'\})$-link can be found in $H$ using the very same argument seen in the proof of Lemma~\ref{lem:connect} for that stage. 

Unfortunately, we were unable to employ Lemma~\ref{lem:connect-useless} in our account. Indeed, in subsequent arguments the connecting lemmas are used repeatedly 
in order to connect prescribed pairs of vertices which although admit a relatively large codegree are essentially arbitrary. We were unable to determine whether these pairs are also $\beta$-relevant (for an appropriate $\beta$). For indeed, a pair is $\beta$-relevant if it manages to survive the {\sl cleanup procedure}, so to speak, giving rise to $H^{(\beta)}$. Arbitrary pairs of vertices admitting high codegree in $H$ may of course not survive this process. 

We do, however, perceive Lemma~\ref{lem:connect-useless} as being relevant to the pursuit of Conjecture~\ref{conj:main} and consequently mention it here.

\section{Absorbing-path lemmas}\label{sec:absorb}

In this section, we prove Lemmas~\ref{lem:path-absorb} and~\ref{lem:path-absorb-2}. At the core of these proofs stands the notion of a $\beta$-{\sl absorber} which is a variant of what is often referred to as the {\em natural absorber} as far as tight cycles in $3$-graphs are concerned. 

\begin{definition}
Let $H$ be a $3$-graph. For $\beta >0$ and $v \in V(H)$, a quadruple $(x,y,z,w) \in V(H)^4$ is said to be a $(\beta,v)$-{\em absorber} if 
\begin{enumerate}
\item [(A.1)] $
\{x,y,z\},\{y,z,w\}, \{v,x,y\},\{v,y,z\},\{v,z,w\} \in E(H). 
$
\item [(A.2)] Moreover, $\deg_H(x,y),\deg_H(z,w) \geq \beta n$. 
\end{enumerate}
\end{definition}

\noindent
We say $\beta$-{\em absorber} to mean $(\beta,v)$-absorber for some $v \in V(H)$.

Our proofs of both absorbing-path lemmas are modelled after the same conceptual three step argument seen in~\cite{RRS06}. First, a counting lemma for $(\beta,v)$-absorbers with the vertex $v$ prescribed is established; this can be seen in Lemma~\ref{lem:cnt}. Second, the aforementioned counting lemma is employed in a hypergeometric experiment as to establish the existence of a "small" set $\F$ of vertex-disjoint $\beta$-absorbers that can absorb 
any set of vertices that is not too "large"; this can be seen in Lemma~\ref{lem:F}. Third, using the connecting lemmas, namely Lemmas~\ref{lem:connect} and~\ref{lem:con-1deg}, we "string", so to speak, the members of $\F$ into a single path yielding the required absorbing path. 

Lemmas~\ref{lem:cnt} and~\ref{lem:F}, capturing the first two steps in the above outlined approach, are capable of handling both settings considered in Theorems~\ref{thm:main} and~\ref{thm:main2}. This is due to~\cite[Remark~1.4]{RR10} asserting that if an $n$-vertex $3$-graph $H$ admits $\delta_2(H) = \Omega(n)$, then $\delta(H) = \Omega(n^2)$.

The third step in the above plan, however, we treat separately across the two aforementioned settings. While the overall scheme of the third step is the same between the two settings, it is here that invocations to the two connecting lemmas are made. The inherent differences between these two lemmas compels (us into having) two separate treatments. In~\S~\ref{sec:proof-absorb-1}, Lemma~\ref{lem:path-absorb} is proved, while in~\S~\ref{sec:proof-absorb-2}, Lemma~\ref{lem:path-absorb-2} is proved.  

\subsection{A counting lemma for $\beta$-absorbers}\label{sec:cnt}

Let $H$ be a $3$-graph and let $v \in V(H)$. We write $L_v:=L_v(H)$ to denote the {\em link graph of $v$}, that is the graph whose vertex set is $V(H)\sm \{v\}$ and in which two (distinct) vertices, namely $x$ and $y$, form an edge whenever $\{x,y,v\} \in E(H)$. Put 
$$
L_{\beta,v} := \{xy \in L_v: \deg_H(x,y) \geq \beta n\}.
$$

\begin{lemma}\label{lem:cnt}
For every $d_{\cntref},\alpha_{\cntref}, \beta_{\cntref} \in (0,1]$ such that $\beta_{\cntref} <  d_{\cntref}$, there exist an integer $n_{\cntref}:=n_{\cntref}(d_{\cntref},\alpha_{\cntref}, \beta_{\cntref})$, a real $\rho_{\cntref} = \rho_{\cntref}(d_{\cntref},\alpha_{\cntref},\beta_{\cntref}) > 0$, and a real $c_{\cntref} := c_{\cntref}(d_{\cntref},\alpha_{\cntref})> 0$ such that the following holds for any integer $n \geq n_{\cntref}$ and $0<\rho < \rho_{\cntref}$. 

Let $H$ be an $n$-vertex $(\rho,d_{\cntref})_{\cherry}$-dense $3$-graph satisfying $\delta(H) \geq \alpha_{\cntref} \binom{n-1}{2}$, and let $v \in V(H)$. Then, there are at least 
$
c_{\cntref}n^4
$
$(\beta,v)$-absorbers in $H$.
\end{lemma}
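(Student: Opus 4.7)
The plan is to count $(\beta,v)$-absorbers for a fixed $v \in V(H)$ by working inside the link graph $L_v$ and applying the $\cherry$-density hypothesis twice. First I would pass to a dense sub-structure of $L_v$: since $e(L_v) \geq \alpha\binom{n-1}{2}$ by the minimum-degree assumption and at most $|B_\beta| \leq \rho n^2/(d-\beta)$ edges of $L_v$ correspond to pairs of codegree below $\beta n$ (by~\eqref{eq:Bbeta-size-2}), the subgraph $\tilde L_v \subseteq L_v$ obtained by discarding edges $xy$ with $\deg_H(x,y) < \beta n$ still has $\Omega(\alpha n^2)$ edges once $\rho$ is small compared to $\alpha$. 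A standard Erd\H{o}s-type argument on subgraphs of prescribed minimum degree then extracts $F \subseteq \tilde L_v$ with $\delta(F), |V(F)| = \Omega(\alpha n)$. Working inside $F$ is convenient because for any path $xyzw$ of $F$, three of the five codegrees required by (A.2), namely $\deg_H(x,y),\deg_H(y,z),\deg_H(z,w)$, automatically clear the $\beta n$ threshold.

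The first application of $\cherry$-density is with $G_1 = G_2 = \vec F$: since $|\P_2(\vec F, \vec F)| = \sum_y \deg_F(y)^2 \geq |V(F)|\delta(F)^2 = \Omega(\alpha^3 n^3)$, the hypothesis yields at least $\Omega(d\alpha^3 n^3)$ ordered triples $(x,y,z)$ with $xy, yz \in F$ and $\{x,y,z\} \in E(H)$. Writing $\sigma(y,z) := |\{x : xy \in F,\, \{x,y,z\} \in E(H)\}|$, a Markov-type averaging on $\sum_{(y,z) \in \vec F}\sigma(y,z) = \Omega(d\alpha^3 n^3)$ produces a set $P \subseteq \vec F$ of size $\Omega(d\alpha^3 n^2)$ whose members $(y,z)$ all satisfy $\sigma(y,z) = \Omega(d\alpha^3 n)$.

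The second application of $\cherry$-density is with $G_1 = P$ and $G_2 = \vec F$. Because every $(y,z) \in P$ has $z \in V(F)$ and hence $\deg_F(z) \geq \delta(F)$, we obtain $|\P_2(P, \vec F)| \geq |P|\delta(F) = \Omega(d\alpha^4 n^3)$, and the hypothesis therefore produces at least $\Omega(d^2\alpha^4 n^3)$ triples $(y,z,w)$ with $(y,z) \in P$, $zw \in F$, and $\{y,z,w\} \in E(H)$. For each such $(y,z,w)$, the defining property of $P$ supplies at least $\sigma(y,z) - 4 = \Omega(d\alpha^3 n)$ choices of $x \notin \{v,y,z,w\}$ completing $(x,y,z,w)$ to a candidate absorber. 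To finish, I would subtract the $O(\rho n^4/(d-\beta))$ quadruples that violate one of the two leftover codegree constraints $\deg_H(x,z) \geq \beta n$ or $\deg_H(y,w) \geq \beta n$ demanded by (A.2); these are negligible for $\rho$ sufficiently small. Multiplying the counts produces at least $c_{\cntref} n^4$ genuine absorbers.

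The main obstacle is that the absorber count is essentially the symmetrically coupled sum $\sum_{(y,z)\in\vec L_v}\sigma(y,z)\sigma(z,y)$, and Cauchy--Schwarz cannot lower-bound it by $\sum\sigma(y,z)$ alone: one must contend with the possibility that $\sigma$ is strongly anti-correlated with its reversal. The two-stage Markov-then-$\cherry$-density strategy above circumvents this by first isolating the set $P$ of pairs with many left extensions and then invoking $\cherry$-density on $P$ itself to manufacture the right extensions. This is also the unique step in the proof of the absorbing-path lemma where the Dirac-type hypothesis $\delta(H) \geq \alpha\binom{n-1}{2}$ is actually used, as it is precisely what supplies the dense subgraph $F$ inside $L_v$.
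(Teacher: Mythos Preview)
Your proposal is correct and reaches the same conclusion, but the route differs from the paper's in two places worth noting.

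First, where you extract a high-minimum-degree subgraph $F\subseteq \tilde L_v$ and apply $\cherry$-density directly to $(\vec F,\vec F)$ to produce $\Omega(d\alpha^3 n^3)$ ordered triples $(x,y,z)$ with $xy,yz\in F$ and $\{x,y,z\}\in E(H)$, the paper instead stays in $L_{\beta,v}$, invokes the (known) Sidorenko bound for $P_4$ to obtain $\Omega(\alpha^3 n^4)$ labelled copies of $P_4$, and only then brings in $\cherry$-density. Your choice avoids the Sidorenko input altogether at the cost of the preliminary min-degree pass.

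Second, after a Markov step both arguments isolate a large set of ``good'' middle pairs, but the paper packages its two $\cherry$-density applications through Lemma~\ref{lem:lowdegpair} (bounding the pairs $(u,w)$ with few $H$-extensions into auxiliary graphs $G_1,G_3$ built from the $P_4$ ends), whereas you apply $\cherry$-density a second time directly with $G_1=P$ and $G_2=\vec F$. This is cleaner and in fact yields a slightly different constant, of order $d^3\alpha^7$ rather than the paper's $d^2\alpha^9$; either suffices. Your closing subtraction of the $O(\rho n^4/(d-\beta))$ quadruples violating the two residual codegree constraints $\deg_H(x,z),\deg_H(y,w)\geq\beta n$ is a point the paper's write-up handles less explicitly, and your treatment of it is sound. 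Both arguments share the key feature that this lemma is the sole place the Dirac condition $\delta(H)\geq\alpha\binom{n-1}{2}$ enters.
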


\begin{proof}
Given $\alpha := \alpha_{\cntref}, \beta:=\beta_{\cntref}$ and $d := d_{\cntref}$ set 
\begin{equation}\label{eq:rho-cnt}
\rho_{\cntref} := \min \{\alpha (d-\beta)/8,\;d\alpha^{10}/2^{36}\}.
\end{equation}
let $0<\rho < \rho_{\cntref}$ be fixed, and let $n$ be sufficiently large. Let $H$ be an $n$-vertex $(\rho,d)_{\cherry}$-dense $3$-graph as in the premise and fix $v \in V(H)$. 

Having $\deg_H(v) \geq \alpha \binom{n-1}{2}$ asserts that $e(L_v) \geq \alpha \binom{n-1}{2}$. Then, for a sufficiently large $n$
$$
e(L_{\beta,v}) \geq e(L_v) - |B_\beta| \overset{\eqref{eq:Bbeta-size-2}}{\geq} \alpha \binom{n-1}{2} - \frac{\rho}{d-\beta}n^2\geq \frac{\alpha}{4}n^2 - \frac{\rho}{d-\beta}n^2 \overset{\eqref{eq:rho-cnt}}{\geq} \alpha n^2 /8,
$$
where $B_\beta$ is as in~\eqref{eq:Bbeta}. By~\cite[Proposition~1.2.2]{Diestel}, $L_{\beta,v}$ admits a subgraph with minimum degree at least $\alpha n /8$, implying in turn that 
\begin{equation}\label{eq:Lbv-size}
\ell := |V(L_{\beta,v})| \geq \alpha n /8. 
\end{equation}

Sidorenko's conjecture~\cite{Sid1,Sid2} is true for the $2$-graph $P_4$~\cite{BR}, where by $P_4$ we mean the path consisting of $3$ edges and $4$ vertices. Then, for  sufficiently large $n$ there are at least 
$$
(n-1)^4\left(\frac{2e(L_{\beta,v})}{n^2}\right)^3 \geq \frac{n^4}{2}\cdot  \Big(\frac{2\alpha}{8}\Big)^3 = \frac{\alpha^3}{2^7} n^4
$$
homomorphisms of $P_4$ into $L_{\beta,v}$. Consequently (and again assuming $n$ is sufficiently large) there is a collection $\P$ of at least $\alpha^3 n^4/2^8$ labelled copies of $P_4$ in $L_{\beta,v}$. 

For an ordered pair $(u,w) \in V(L_{\beta,v}) \times V(L_{\beta,v})$, let $P_4(u,w)$ denote the number of members of $\P$ with the form $(x,u,w,y)$. 
Set $K:= \alpha^3 \ell^2 /2^{10}$. Owing to~\eqref{eq:Lbv-size}, 
\begin{equation}\label{eq:Ksize}
K \geq \frac{\alpha^5}{2^{16}}n^2.
\end{equation}
Put
\begin{align*}
\vec X & := \{(u,w) \in V(L_{\beta,v}) \times V(L_{\beta,v}): P_4(u,w) < K\}
\intertext{and let}
\vec Y & := V(L_{\beta,v}) \times V(L_{\beta,v}) \sm \vec X.
\end{align*}
Then, 
$$
\frac{\alpha^3}{2^8}\ell^4 \leq \frac{\alpha^3}{2^8}n^4 \leq |\P| = \sum_{(u,w) \in V(L_{\beta,v})^2} P_4(u,w)  \leq |\vec X| \cdot K + (\ell^2 -|\vec X|)\ell^2,
$$
Isolating $|\vec X|$, one arrives at 
$$
|\vec X| \leq \frac{(1-\frac{\alpha^3}{2^8})\ell^4}{\ell^2-K} = \frac{1-\frac{\alpha^3}{2^8}}{1-\frac{\alpha^3}{2^{10}}}\ell^2 \leq \Big(1- \frac{\alpha^3}{2^{10}}\Big)\ell^2.
$$
As a result, we attain
\begin{equation}\label{eq:vecY}
|\vec Y| \geq \frac{\alpha^3}{2^{10}}\ell^2 \overset{\eqref{eq:Lbv-size}}{\geq}\frac{\alpha^5}{2^{16}} n^2.
\end{equation}

In preparation for two applications of Lemma~\ref{lem:lowdegpair}, we define three graphs, namely $G_1$, $G_2$, and $G_3$, edges of which collectively capture the members of $\P$. Lemma~\ref{lem:lowdegpair} is then applied to $G_1$ and $G_3$ (along with additional parameters defined below); the resultant estimates attained from these two applications of the lemma are then used to analyse $G_2$. 

Set $G_2 := (V(L_{\beta,v}),Y)$, where $Y$ denotes the set of unordered pairs underlying $\vec Y$. For $(u,w) \in \vec Y$, set 
\begin{align*}
\vec A_{(u,w)} &:= \{(a,u): (a,u,w,b) \in \P,\; \text{for some $a,b \in V(L_{\beta,v})$}\}
\intertext{and set}
\vec B_{(u,w)}& := \{(w,b): (a,u,w,b) \in \P\; \text{for some $a,b \in V(L_{\beta,v})$}\}.
\end{align*} 
Define 
$$
G_1  := \Big(V(L_{\beta,v}), \bigcup_{(u,w) \in \vec Y} A_{(u,w)}\Big) \mand G_3 := \Big(V(L_{\beta,v}), \bigcup_{(u,w) \in \vec Y} 
B_{(u,w)}\Big),
$$
where $A_{(u,w)}$ and $B_{(u,w)}$ are the sets of unordered pairs underlying $\vec A_{(u,w)}$ and $\vec B_{(u,w)}$, respectively. 
The graphs $G_1,G_2,G_3$ are not necessarily edge disjoint.

Define the sets of vertices 
\begin{align*}
U & := \{u \in V(L_{\beta,v}): (a,u,w,b) \in \P\; \text{for some}\; a,w,b \in V(L_{\beta,v}) \mand (u,w) \in \vec Y\}\\ 
W & := \{w \in V(L_{\beta,v}): (a,u,w,b) \in \P\; \text{for some}\; a,u,b \in V(L_{\beta,v})\mand (u,w) \in \vec Y\}.
\end{align*}
Observe that $U \subseteq V(G_1)$ and that $W \subseteq V(G_3)$.

For $(u,w) \in \vec Y$, observe that $|\vec A_{(u,w)}|, |\vec B_{(u,w)}| \geq \alpha^5 n /2^{16}$. For if one of these sets, say $\vec A_{(u,w)}$, violates this inequality, then 
$$
P_4(u,w) \leq |\vec A_{(u,w)}| \deg_{L_{\beta,v}}(w) < \frac{\alpha^5}{2^{16}} n \cdot  n \overset{\eqref{eq:Ksize}}{\leq} K,
$$ 
in contradiction to $(u,w) \in \vec Y$. 
Consequently, $\deg_{G_1}(u), \deg_{G_3}(w) \geq \alpha^5 n /2^{16}$ for every $u \in U$ and every $w \in W$, respectively. 

Set 
\begin{align*}
\vec B_U &:= \{(u,w) \in U \times W: \deg_H(u,w,G_1) < d\alpha^5 n /2^{17}\},\\
\vec B_W &:= \{(u,w) \in U \times W: \deg_H(w,u,G_3) < d\alpha^5 n /2^{17}\}.
\end{align*}
Lemma~\ref{lem:lowdegpair}, applied to $G_1, U,\vec B_U$ and to $G_3, W, \vec B_W$, asserts that
\begin{equation}\label{eq:theBs}
|\vec B_U|,|\vec B_W| \leq \frac{2^{17} \rho}{d \alpha^5} n^2.
\end{equation}

Owing to~\eqref{eq:vecY}, $e(G_2) \geq \alpha^3n^2/2^{17}$ holds. This fact, together with the estimates seen in~\eqref{eq:theBs}, imply that $G_2$ admits at least 
$$
e(G_2) - |\vec B_U|-|\vec B_W| \overset{\eqref{eq:vecY}}{\geq} \frac{\alpha^5}{2^{17}} n^2 - \frac{2^{18} \rho}{d \alpha^5}  n^2 \overset{\eqref{eq:rho-cnt}}{\geq} \frac{\alpha^5}{2^{18}} n^2 
$$ 
unordered pairs $\{u,w\} \in E(G_2) \subseteq E(L_{\beta,v})$ with $u \in U$ and $w \in W$ such that $\deg_H(u,w,G_1)$, $\deg_H(w,u,G_3) \geq d\alpha^5 n/ 2^{17}$. Call these pairs in $E(G_2)$ {\em good}\footnote{As good pairs arise from edges of a simple graph these are non-degenerate.}. 

Let $(u,w) \in U \times W$ be good. At least\footnote{Strictly speaking, $\deg_H(u,w,G_1)-1$ can be replaced with $\deg_H(u,w,G_1)$ as $\{a,u,w\} \in E(H)$.} $\deg_H(u,w,G_1)-1$ neighbours $a$ of  $u$ in $G_1$ satisfy $a \not= w$. Each such neighbour $a$ of $u$ gives rise to a triple $(a,u,w)$ with the property that $au \in E(L_{\beta,v})$ so that $\deg_H(a,u) \geq \beta n$. The triple $(a,u,w)$ extends to at least\footnote{Strictly speaking, $\deg_H(w,u,G_3)-2$ can be replaced with $\deg_H(w,u,G_3)-1$ as $\{u,w,b\} \in E(H)$.} $\deg_H(w,u,G_3) - 2$ quadruples $(a,u,w,b)$ satisfying $b \notin \{a,u\}$ and $wb \in E(L_{\beta,v}))$ so that $\deg_H(w,b) \geq \beta n$ holds. Any quadruple thus formed defines a $(\beta,v)$-absorber.

It follows that for a sufficiently large $n$, a single good pair $(u,w)$ gives rise to at least 
$$
\Big(\frac{d\alpha^5}{2^{17}} n -1\Big)\Big(\frac{d\alpha^5}{2^{17}} n -2\Big) \geq \frac{d^2\alpha^{10}}{2^{35}}n^2 
$$
$(\beta,v)$- absorbers. 
Ranging over all good pairs $(u,w)$, we attain at least 
$$
\frac{\alpha^5}{2^{18}}n^2\cdot \frac{d^2\alpha^{10}}{2^{35}}n^2 = \frac{d^2\alpha^{15}}{2^{53}}n^4
$$
$(\beta,v)$-absorbers overall; concluding the proof of the lemma. 
\end{proof}

\subsection{A "small" set of $\beta$-absorbers}

Let $H$ be a $3$-graph. For $v \in V(H)$ and $\beta >0$, let $\A_{\beta,v}$ denote the set of $(\beta,v)$-absorbers in $H$.

\begin{lemma}\label{lem:F}
For every $d_{\Fref},\alpha_{\Fref}, \beta_{\Fref}, \phi_{\Fref}\in (0,1]$ such that $\beta_{\Fref} < d_{\Fref}$, there exist an integer $n_{\Fref} := n_{\Fref}(d_{\Fref},\alpha_{\Fref}, \beta_{\Fref},\phi_{\Fref})$, and reals $\rho_{\Fref}:= \rho_{\Fref}(d_{\Fref},\alpha_{\Fref},\beta_{\Fref}) >0$ and $\eta_{\Fref} := \eta_{\Fref}(d_{\Fref},\alpha_{\Fref},\phi_{\Fref}) >0 $  such that the following holds whenever $n \geq n_{\Fref}$ and $\rho < \rho_{\Fref}$. 

Let $H$ be an $n$-vertex $(\rho,d_{\Fref})_{\cherry}$-dense $3$-graph satisfying $\delta(H) \geq \alpha_{\Fref} \binom{n-1}{2}$. Then, there exists a set $\F$ of vertex-disjoint $\beta$-absorbers such that 
\begin{enumerate}
	\item [\namedlabel{prop:f1}{{\em (F.1)}}] $|\F| \leq \phi_{\Fref}n$
	\item [\namedlabel{prop:f2}{{\em (F.2)}}] For every $v \in V(H)$: $|\A_{\beta_{\Fref},v} \cap \F| \geq \eta_{\Fref}n$. 
\end{enumerate}
\end{lemma}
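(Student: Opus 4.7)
The plan is the standard probabilistic alteration method. By Lemma~\ref{lem:cnt} (applied with the same $d,\alpha,\beta$), for every $v \in V(H)$ there are at least $c_{\cntref} n^4$ $(\beta,v)$-absorbers in $H$, where $c_{\cntref} = d^2\alpha^9/2^{48}$; the total number of $\beta$-absorbers in $H$ is trivially at most $n^4$.

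I would form a random subset $\F'$ of the set $\A_{\beta}$ of all $\beta$-absorbers of $H$ by including each $4$-tuple independently with probability $p := x/n^3$, where $x>0$ is a small absolute multiple of $c_{\cntref}$ to be fixed below. Three first-moment computations apply: $\Ex[|\F'|] \leq xn$; for every $v$, $\Ex[|\A_{\beta,v}\cap \F'|] \geq c_{\cntref} x n$; and, since the number of unordered pairs of $\beta$-absorbers sharing at least one vertex is $O(n^7)$ (pick the shared vertex and then the two absorbers through it), the expected number of intersecting pairs in $\F'$ is $O(x^2 n)$.

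I would then concentrate these quantities. Markov's inequality handles the bounds on $|\F'|$ and on the number of intersecting pairs, losing only a constant factor. Each per-vertex quantity $|\A_{\beta,v}\cap\F'|$ is a sum of independent indicators with mean $\Theta(n)$, so a lower-tail Chernoff bound gives failure probability $\exp(-\Omega(n))$, easily beating $1/n$; a union bound over $v \in V(H)$ then yields, with positive probability, a realisation of $\F'$ satisfying $|\F'|\leq 2xn$, $|\A_{\beta,v}\cap \F'| \geq c_{\cntref} x n/2$ for every $v$, and $O(x^2 n)$ intersecting pairs.

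Finally, I would delete one absorber from each intersecting pair in $\F'$ to obtain a vertex-disjoint family $\F$. This deletion costs $O(x^2 n)$ absorbers both globally and per vertex. Choosing $x$ a sufficiently small absolute multiple of $c_{\cntref}$ (concretely $x = c_{\cntref}/68$) then ensures both $|\F| \leq 2xn \leq (d^2\alpha^9/(17\cdot 2^{49}))\,n$, matching the stated bound on $\ff_{\Fref}$, and $|\A_{\beta,v}\cap \F| \geq c_{\cntref} x n/2 - O(x^2 n) = \Omega(c_{\cntref}^2 n)$, which supplies a valid $\fa_{\Fref}$ depending only on $d$ and $\alpha$. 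The only nontrivial point is the Chernoff-plus-union-bound step across all $n$ vertices; this is exactly what forces the per-vertex count produced by Lemma~\ref{lem:cnt} to grow like $\Theta(n^4)$, so that the per-vertex first moments survive the union bound with plenty of room.
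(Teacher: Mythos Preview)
Your proposal is correct and essentially identical to the paper's argument: both sample $4$-tuples independently with probability $\gamma n^{-3}$ where $\gamma = c_{\cntref}/(4\cdot 17)$ (your $x=c_{\cntref}/68$ is precisely this), use Chernoff and a union bound for the per-vertex lower bounds, Markov for the number of intersecting pairs, and then delete one tuple from each intersecting pair. The only cosmetic differences are that the paper samples from all of $V(H)^4$ (discarding non-absorbers afterwards) rather than from $\A_\beta$, and uses Chernoff rather than Markov for the upper bound on $|\F'|$; neither affects the argument.
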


\begin{proof}
Given $d:=d_{\Fref}$, $\alpha:=\alpha_{\Fref}$, $\beta:= \beta_{\Fref}$, and $\phi:= \phi_{\Fref}$ as in the premise, set $\rho_{\Fref} := \rho_{\cntref}(d,\alpha,\beta)$. In addition, define the auxiliary constants
\begin{equation}\label{eq:g}
c := c_{\cntref}(d,\alpha) \mand  \gamma := \min\Big\{\frac{c}{4 \cdot 7}, \frac{\phi}{2} \Big\}.  
\end{equation}
Finally, set $\eta_{\Fref} := c\gamma/4$. 

Fix $0<\rho < \rho_{\Fref}$. Let $H$ be a $(\rho,d)_{\cherry}$-dense $3$-graph as in the premise. Then,
$
|\A_{\beta,v}| \geq c n^4
$
for every $v \in V(H)$, by Lemma~\ref{lem:cnt}. 
Let $\F'$ be a set of quadruples where each quadruple in $V(H)^4$ is put in $\F'$ independently at random with probability $\gamma n^{-3}$. Then, $\Ex\big[|\F'|\big] = \gamma n$. Chernoff's inequality~\cite[Equation~(2.9)]{JLR} then yields that 
\begin{equation}\label{eq:F'1}
|\F'| \leq 2 \gamma n \leq \phi n
\end{equation}
holds with probability $1-o(1)$. Furthermore, for every vertex $v$, 
$$
\Ex \big[|\A_{\beta,v} \cap \F'|\big] \geq cn^4 \gamma n^{-3} = c \gamma n
$$
holds. Chernoff's inequality~\cite[Equation~(2.9)]{JLR} and the union bound yield 
that 
\begin{equation}\label{eq:F'2}
|\A_{\beta,v} \cap \F'| \geq c \gamma n/2, \;\text{for every $v \in V(H)$}
\end{equation}
holds with probability $1-o(1)$.

Let $I:= I(\F')$ denote the number of pairs of members of $\F'$ meeting one another.
For a sufficiently large $n$, the total number of pairs of intersecting quadruples taken in $V(H)$ is at most 
$$
\binom{4}{1}n^4 \cdot n^3 + \binom{4}{2} (2!) n^4\cdot n^2 + \binom{4}{3}(3!)n^4\cdot n \leq 6 n^7.
$$
Then, 
$$
\Ex \big[|I|\big] \leq 6n^7 \cdot(\gamma n^{-3})^2 \leq 6\gamma^2 n.
$$
Markov's inequality now implies that 
\begin{equation}\label{eq:F'3}
|I| < 7 \gamma^2n
\end{equation}
holds with positive probability. 

It follows that an $\F'$ satisfying~\eqref{eq:F'1}, \eqref{eq:F'2}, and~\eqref{eq:F'3} exists. Fix one such $\F'$. Define $\F$ to be the set of quadruples attained from $\F'$ by, first, removing all quadruples which do not $\beta$-absorb any $v$ and, second, from each intersecting pair of quadruples remove one of the members of that pair. Property~{\em\ref{prop:f1}} trivially holds for $\F$. To see that~{\em\ref{prop:f2}} holds for $\F$, note that for every $v\in V(H)$ 
$$
|\A_{\beta,v} \cap \F| \geq c \gamma n/2 - 7 \gamma^2 n \overset{\eqref{eq:g}}{\geq} c \gamma n/4 = \eta_{\Fref}n
$$
holds whenever $n$ is sufficiently large. 
\end{proof}


\subsection{Proof of Lemma~\ref{lem:path-absorb}: $2$-degree setting}\label{sec:proof-absorb-1}

With Lemmas~\ref{lem:cnt} and~\ref{lem:F} established, we are ready to prove Lemma~\ref{lem:path-absorb}. All that remains is to "string", so to speak, the members of $\F$ (from Lemma~\ref{lem:F}) into a single path and prove the absorption capabilities of the resulting path. 
Given a quadruple $(x,y,z,w)$, we refer to $(x,y)$ and $(z,w)$ as the {\em front} and {\em rear} end-pair of the quadruple respectively.

\begin{proofof}{Lemma~\ref{lem:path-absorb}}
Let $d:=d_{\absref}$, $\alpha := \alpha_{\absref}$, $\beta:=\beta_{\absref}< \min\{\alpha,d\}$ be given. 
Set 
\begin{equation}\label{eq:kappa-mu}
\kappa:= \kappa_{\absref}:= \beta/2 \mand \mu:= \mu_{\absref} := \eta_{\Fref}(d,\alpha,\beta/20).
\end{equation}
To be clear, the definition of $\mu$ appeals to that of $\eta_{\Fref}$. The latter requires a value for $\phi_{\Fref}$ be set; here, we take $\phi_{\Fref} = \beta /20$. 
Set
\begin{equation}\label{eq:rho3}
\rho_{\absref} := \min\left\{\rho_{\Fref}(d,\alpha,\beta,\beta/20),\;\rho_{\conref}(d,\beta-\kappa) \cdot \left(1- \kappa \right)^3/2\right\}.
\end{equation}
Let $0<\rho < \rho_{\absref}$ be fixed, let $n$ be a sufficiently large integer, and let $H$ be an $n$-vertex $(\rho,d)_{\cherry}$-dense $3$-graph with $\delta_2(H) \geq \alpha(n-2)$. 

Let $\F$ denote the set of $\beta$-absorbers, existence of which in $H$ is assured by Lemma~\ref{lem:F} applied with 
$\alpha_{\Fref} = \alpha$, $d_{\Fref} = d$, $\beta_{\Fref} = \beta$, $\phi_{\Fref} = \beta/20$, and owing to $\rho < \rho_{\Fref}(d,\alpha,\beta,\beta/20)$, by~\eqref{eq:rho3}. 
Fix an arbitrary ordering of the members of $\F$, namely
$F_1,F_2,\ldots,F_r$, where $r := |\F| \leq \beta n/20$, by~{\em\ref{prop:f1}}. 
In what follows, we prove that a path $A$ of the form
\begin{equation}\label{eq:path-A}
F_1 \circ P_1 \circ \cdots \circ F_{r-1} \circ P_{r-1} \circ F_r
\end{equation}
exists in $H$, where here each $P_i$ is a $10$-path connecting the rear end-pair of $F_i$ with the front end-pair of $F_{i+1}$; we use $\circ$ to denote path concatenations along these pairs. If such a path $A$ were to exist, then it would form a $(\beta,\mu,\kappa)$-absorbing path. To see this, observe, first, that  
\begin{equation}\label{eq:path-A-length}
|V(A)| = 4 r +6(r-1)  \leq 10 r \leq  \beta n /2  \overset{\eqref{eq:kappa-mu}}{=} \kappa n. 
\end{equation}
Observe, second, that~{\em\ref{prop:f2}} together with a standard greedy argument (see, e.g.,~\cite[Claim~2.6]{RRS06}), assert that such a path $A$ would form a $\mu n$-absorbing path. Observe, third, that the ends of such a path $A$ would have codegree at least $\beta n$ for, indeed, $\beta < \alpha$, and $\delta_2(H) \geq \alpha(n-2)$ (here we utilise the fact that $n$ is sufficiently large). 

It remains to establish the existence of the aforementioned path. This we do inductively as follows. Put $A_1 := F_1$. Suppose that the (partial) path 
\begin{equation}\label{eq:Ai}
A_i := F_1 \circ P_1 \circ \cdots \circ F_{i-1} \circ P_{i-1} \circ F_i
\end{equation}
has been defined for some $i \in [r-1]$. We define $A_{i+1}$ as follows. 
Set 
\begin{equation}\label{eq:Vi}
V_i := \bigg(V(H) \sm \big(V(A_i) \cup V(\F)\big)\bigg) \cup \{a,b,c,d\},
\end{equation}
where $(a,b)$ is the rear end-pair of $F_i$ and $(c,d)$ is the front end-pair of $F_{i+1}$. The next two claims verify that Lemma~\ref{lem:connect} can be applied to $H[V_i]$ in order to connect $(a,b)$ with $(c,d)$ via $H[V_i]$. 

\begin{claim}
Let $i \in [r]$. Then, $H[V_i]$ is $(\rho',d)_{\cherry}$-dense for some $\rho'< \rho_{\conref}(d,\beta/2)$.
\end{claim}

\begin{innerproof}
As $|V(A_i) \cup V(\F)| \leq \kappa n$, it follows that $|V_i| \geq \left(1- \kappa\right) n$ for every $i \in [r]$. Fix $\vec G_1, \vec G_2 \subseteq V_i \times V_i$, and note that 
\begin{align}
e_{H[V_i]}(\vec G_1,\vec G_2) & \overset{\phantom{\eqref{eq:rho3}}}{\geq} d|\P_2(\vec G_1,\vec G_2)| - \rho n^3 \nonumber \\
& \overset{\eqref{eq:rho3}}{\geq} d|\P_2(\vec G_1,\vec G_2)| - \rho_{\conref}(d,\beta-\kappa)\frac{(1-\kappa)^3}{2} n^3 \label{eq:middle}\\  
& \overset{\phantom{\eqref{eq:rho3}}}{\geq} d|\P_2(\vec G_1,\vec G_2)| - \frac{\rho_{\conref}(d,\beta-\kappa)}{2}|V_i|^3. \nonumber
\end{align}
Owing to $\kappa = \beta/2$ the claim follows. 
\end{innerproof}

\begin{claim}
Let $i \in [r]$. Then, $\delta_2(H[V_i]) \geq \beta n /2$. 
\end{claim}

\begin{innerproof}
Owing to $\beta < \alpha$,  $\delta_2(H) \geq \alpha(n-2)$, and $n$ being sufficiently large, we may write 
$$
\delta_2(H[V_i]) \geq \alpha (n-2) -\kappa n = (\alpha -\kappa)n -2 \alpha \overset{\alpha > \beta}{\geq} (\beta - \kappa)n \overset{\eqref{eq:kappa-mu}}{=} \beta n /2.
$$
\end{innerproof}

Lemma~\ref{lem:connect}, applied to $H[V_i]$ with $\alpha_{\conref} = \beta/2$ and $d_{\conref} = d$, asserts that any two pairs of vertices in $H[V_i]$ can be connected via a $10$-path in $H[V_i]$. This in particular holds for the pairs $(a,b)$ and $(c,d)$. The path $P_{i+1}$, as defined above, exists and consequently $A_{i+1}$ as well. This completes the proof of the existence of $A$ and thus concludes the proof of the lemma.  
\end{proofof}

\subsection{Proof of Lemma~\ref{lem:path-absorb-2}: $1$-degree setting}\label{sec:proof-absorb-2}

%
Given $\alpha := \alpha_{\absrefone}$, $d := d_{\absrefone}$, and $\eta:=\eta_{\absrefone}$ as in the premise of the lemma, choose two constants  $0 <\zeta \ll \kappa$ such that 
\begin{equation}\label{eq:zeta-kappa}
\kappa \ll \min\bigg\{\alpha,d,\eta,\frac{\alpha + d -1 }{2(d+1)+1}\bigg\},\; \zeta + \kappa \ll g(\alpha-\kappa,d)
\end{equation}
holds, where here $g(\cdot)$ is as defined in~\eqref{eq:g-func}. 
The first constraint being our prerogative, we explain the validity of the second. Owing to $\zeta \ll \kappa$, it suffices to argue that a choice for $\kappa$ satisfying 
$$
2 \kappa \ll g(\alpha- \kappa,d) \overset{\eqref{eq:g-func}}{=} \min\Big\{\alpha -\kappa,d,\frac{\alpha -\kappa + d - 1}{d+1} \Big\}
$$ 
exists. The first term in the minimisation entails having to require $\kappa \ll \alpha$. The second term imposes $\kappa \ll d$. The third, and final, term requires
$
\kappa \ll \frac{\alpha + d -1 }{2(d+1)+1}
$.

We remark that the condition $\kappa \ll \eta$ plays no role in the proof. It is, however, mandated in order to accommodate a subsequent application of Lemma~\ref{lem:path-absorb-2} in the proof of Theorem~\ref{thm:main2} in~\S~\ref{sec:proof-main-2}. 

With $\zeta$ and $\kappa$ fixed, define 
$$
\kappa_{\absrefone}:= \kappa \mand \mu:=\mu_{\absrefone} := \eta_{\Fref}(d,\alpha,\kappa/10).
$$
To be clear, the definition of $\mu$ entails setting $\phi_{\Fref} := \kappa/10$. Set
\begin{equation}\label{eq:rho3-2}
\rho_{\absrefone} := \min\{\rho_{\Fref}(\alpha,d,d-\zeta,\kappa/10),\; 2^{-1}(1-\kappa)^3\rho_{\conrefone}(d,\alpha-\kappa,\zeta+2\kappa)\}.
\end{equation}

Let $0<\rho < \rho_{\absrefone}$ be fixed and let $H$ be a $(\rho,d)_{\cherry}$-dense $3$-graph satisfying $\delta(H) \geq \alpha_{\absrefone}\binom{n-1}{2}$ be given. Let $\F$ be a set of $(d- \zeta)$-absorbers, existence of which is assured by Lemma~\ref{lem:F} applied with 
$\alpha_{\Fref} = \alpha$, $d_{\Fref} = d$, $\beta_{\Fref} = d-\zeta$, and $\phi_{\Fref} = \kappa/10$; and also owing to $\rho < \rho_{\Fref}(\alpha,d,d-\zeta,\kappa/10)$ per~\eqref{eq:rho3-2}. As in the proof of Lemma~\ref{lem:path-absorb}, we seek to establish the existence of a path $A$ of the form~\eqref{eq:Ai}. If such a path $A$ were to exist, then it would form a $(\beta-\kappa, \mu, \kappa)$-absorbing path. Indeed, owing to $r:=|\F| \leq \kappa n /10$ its length would be at most $\kappa n$, by~\eqref{eq:path-A-length}; it would be $\mu n$-absorbing, by~\cite[Claim~2.6]{RRS06}; and its ends would have codegree at least $(d-\eta)n$ as these arise from $(d-\zeta)$-absorbers in $\F$ and $\zeta \ll \eta$.

It remains to establish the existence of $A$. Let $A_i$ and $V_i$ be as defined in~\eqref{eq:Ai} and~\eqref{eq:Vi}, respectively. 
Suffice to prove that the pairs $(a,b)$ and $(c,d)$ (per the definition of $V_i$) can be connected via a $10$-path passing through $H[V_i]$. This we accomplish using Lemma~\ref{lem:con-1deg}. Hence, it remains to prove that for every $i \in [r]$, $H[V_i]$ adheres to the premise of that lemma. The following claims verify this.

Starting with the $\cherry\;$-denseness of $H[V_i]$, note that~\eqref{eq:rho3-2}, the observation that $|V_i| \geq (1-\kappa)n$, and an argument identical to that seen in~\eqref{eq:middle} establish the following. 

\begin{claim}
Let $i \in [r]$. Then, $H[V_i]$ is $(\rho',d)_{\cherry}$-dense for some $\rho' <	\rho_{\conrefone}(d,\alpha-\kappa,\zeta+2\kappa)$. 
\end{claim}

\begin{claim}
Let $i \in [r]$. Then, $\delta(H[V_i]) \geq (\alpha - \kappa)\binom{|V_i|-1}{2}$.
\end{claim}

\begin{innerproof}
Start by observing that 
$$
	\delta(H[V_i]) \geq \delta(H) - |V(A_i) \cup V(\F)|\cdot n,
$$
where the last term on the r.h.s. accounts for all pairs involving vertices from $V(A_i) \cup V(\F)$. Owing to~\eqref{eq:path-A-length} and relaying on $n$ being sufficiently large, we may write
\begin{align} 
	\delta(H[V_i]) & \geq \alpha \binom{n-1}{2} - \kappa n^2 
	 \geq \alpha \frac{n^2}{2} - \kappa n^2 - \alpha \frac{n}{2} \nonumber\\
	& = \big(\alpha -\kappa/2\big)\frac{n^2}{2} -\alpha \frac{n}{2}  
	 \geq \big(\alpha -\kappa\big)\frac{n^2}{2} \label{eq:been-here}\\
	& \geq \big(\alpha -\kappa\big) \binom{|V_i|-1}{2}. \nonumber 
	\end{align}
\end{innerproof}


Having set $\kappa \ll \alpha +d - 1$ in~\eqref{eq:zeta-kappa}, implies that $(\alpha -\kappa) + d > 1$ holds. This, coupled with the condition $\zeta + \kappa \ll g(\alpha-\kappa,d)$, also set in~\eqref{eq:zeta-kappa}, imply that any two disjoint pairs of vertices having codegree at least $(d-\zeta - \kappa)n$ can be connected in $H[V_i]$ via a $10$-path, by Lemma~\ref{lem:con-1deg}. In $H[V_i]$, both pairs $(a,b)$ and $(c,d)$ have codegree at least $(d-\zeta-\kappa)n$ and thus connectable in this fashion. The existence of $A$ is established. 
This concludes the proof of the lemma. 

\section{The path-cover lemma}

In this section we prove our path-cover lemma, namely Lemma~\ref{lem:path-cover}. 
Our proof of this lemma employs the weak regularity lemma stated below in Lemma~\ref{lem:reg}. In~\S~\ref{sec:pc-no-reg-points}, we provide an alternative proof of Lemma~\ref{lem:path-cover} for graphs equipped with the notion of $\points\;$-denseness; the latter notion is a stronger notion than that of $1$-set-denseness assumed in Lemma~\ref{lem:path-cover}. If $\points\;$-denseness is assumed, then the regularity lemma is no longer needed giving rise to a much shorter proof.  

\subsection{Path covers in $1$-set-dense $3$-graphs}

A $3$-graph $H$ is said to be $t$-{\em partite} if there is a vertex partition $V(H) = V_1 \discup V_2 \discup \cdots \discup V_t$ such that each edge $e \in E(H)$ satisfies $|e \cap V_i| \leq 1$ whenever $i \in [t]$. A $t$-partite $H$ is said to be {\em equitable} if its $t$-partition satisfies 
$
|V_1| \leq |V_2| \leq \cdots \leq |V_t| \leq |V_1| +1
$. 
We also refer to the partition itself as {\sl equitable}. An $n$-vertex $3$-partite $3$-graph $H$ with an underlying partition $V(H) = X \discup Y \discup Z$ is said to be $\eps$-{\em regular} if 
\begin{equation}\label{eq:regular}
e_H(X',Y',Z') = \frac{e_H(X,Y,Z)}{|X||Y||Z|}|X'||Y'||Z'| \pm \eps n^3 
\end{equation} 
holds for every $X' \subseteq X$, $Y' \subseteq Y$, and $Z' \subseteq Z$. If only the lower bound seen at~\eqref{eq:regular} is is upheld by $H$, then we refer to such an $H$ as $\eps$-{\em lower-regular}. If in addition $e_H(X,Y,Z) / |X||Y||Z| \geq d$, then $H$ is called $(\eps,d)$-{\em regular} or $(\eps,d)$-{\em lower-regular}, respectively.
The following result is a commonly known generalisation of the main result of~\cite{Szemeredi}. 

\begin{lemma}\label{lem:reg}{\em{\bf (Weak-regularity lemma for $3$-graphs}~\cite{Szemeredi}{\bf)}}
For every $\eps_{\regref} >0$ and integer $t_{\regref}$ there exist integers $n_{\regref}$ and $T_{\regref}$ such that the following holds whenever $n \geq n_{\regref}$. 

Let $H$ be an $n$-vertex $3$-graph. Then, there exists an integer $t$ satisfying $t_{\regref} \leq t \leq T_{\regref}$ and an equitable partition $V(H) = V_1 \discup V_2 \discup \cdots \discup V_t$ such that 
for all but at most $\eps t^3$ triples $i,j,k \in [t]$ the sets $V_i,V_j,V_k$ induce an $\eps_{\regref}$-regular $3$-partite $3$-graph denoted $H[V_i,V_j,V_k]$. 
\end{lemma}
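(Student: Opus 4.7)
The plan is to prove Lemma~\ref{lem:reg} by adapting Szemerédi's classical partition-refinement/index-increment argument to the $3$-uniform setting. Throughout the proof I fix the $3$-graph $H$ on $n$ vertices and, for any disjoint $X,Y,Z \subseteq V(H)$, write $d(X,Y,Z) := e_H(X,Y,Z)/(|X||Y||Z|)$. For an equitable partition $\P = \{V_1,\ldots,V_t\}$ of $V(H)$ I define the \emph{index}
$$
q(\P) := \frac{1}{n^3} \sum_{i,j,k \in [t]} |V_i||V_j||V_k| \cdot d(V_i,V_j,V_k)^2,
$$
which trivially satisfies $0 \leq q(\P) \leq 1$. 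The standard Cauchy--Schwarz (convexity) computation shows that any refinement $\P'$ of $\P$ satisfies $q(\P') \geq q(\P)$; this is the hypergraph analogue of the defect form of Cauchy--Schwarz that drives the classical proof and is entirely routine.

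The core step is an \emph{increment lemma}: if $\P = \{V_1,\ldots,V_t\}$ is equitable and more than $\eps_{\regref} t^3$ triples $(i,j,k)$ fail to be $\eps_{\regref}$-regular, then there exists an equitable refinement $\P'$ of $\P$ into at most $t \cdot 2^{3t^2}$ parts satisfying $q(\P') \geq q(\P) + \eps_{\regref}^5/2$ (the precise exponent is unimportant, only that it is a positive function of $\eps_{\regref}$ alone). The proof is the usual one: for each irregular triple $(V_i,V_j,V_k)$ pick witnessing subsets $X_{ijk} \subseteq V_i$, $Y_{ijk} \subseteq V_j$, $Z_{ijk} \subseteq V_k$ of~\eqref{eq:regular}; then, for each $i \in [t]$, refine $V_i$ using the Boolean algebra generated by all the sets $X_{ijk}$, $Y_{jik}$, $Z_{kji}$ that lie inside $V_i$, which cuts $V_i$ into at most $2^{3t^2}$ atoms; finally, equalise atom sizes by slicing arbitrarily so that the resulting partition is equitable. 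A short computation (expanding $q(\P')$ and using the definition of irregularity on each of the many offending triples) shows that the total squared-density contributed by the refined triples exceeds that of the original unrefined triples by at least $\eps_{\regref}^5$, while the remaining triples only weakly increase $q$ by the convexity step above; the equalisation step costs at most $O(t/|V_1|) = o(1)$ in $q$ for $n$ large.

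With these two facts, the lemma follows by iteration. I begin with an arbitrary equitable partition $\P_0$ into $t_{\regref}$ parts (possible once $n \geq t_{\regref}$). If $\P_s$ already satisfies the conclusion of the lemma I stop; otherwise I apply the increment lemma to obtain $\P_{s+1}$ with $q(\P_{s+1}) \geq q(\P_s) + \eps_{\regref}^5/2$. Because $q$ is bounded above by $1$, this process terminates after at most $s^* := \lceil 2/\eps_{\regref}^5 \rceil$ steps. Defining the tower-type function $f(0) = t_{\regref}$ and $f(s+1) = f(s) \cdot 2^{3 f(s)^2}$, each $\P_s$ has at most $f(s)$ parts, so setting $T_{\regref} := f(s^*)$ and choosing $n_{\regref}$ large enough that the equalisation rounding costs incurred at every one of the $s^*$ iterations remain negligible (that is, $n \geq n_{\regref}$ ensures every intermediate partition has parts of size comparable to $n/T_{\regref}$) yields the desired equitable partition with $t_{\regref} \leq t \leq T_{\regref}$.

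The main obstacle is bookkeeping rather than conceptual: one must verify carefully that the simultaneous refinement of all parts of $\P$ by the Boolean algebras generated by the irregularity witnesses truly delivers the claimed index increment when summed over all $> \eps_{\regref} t^3$ irregular triples (because the same witness set inside $V_i$ must serve several triples $(i,j,k)$), and that the equitable-rounding step does not wipe out the gain. Both are handled by exactly the same calculations as in the graph regularity lemma, with only cosmetic changes (triples of parts and product $|V_i||V_j||V_k|$ replacing pairs and the product $|V_i||V_j|$), so I would not grind through these; the remainder of the argument is a verbatim adaptation of Szemerédi's iteration.
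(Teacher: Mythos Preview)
The paper does not actually prove Lemma~\ref{lem:reg}; it merely states it as ``a commonly known generalisation of the main result of~\cite{Szemeredi}'' and cites it as a black box. So there is no in-paper proof to compare against.

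Your sketch is the standard energy-increment proof and is correct in outline. One small point worth tightening: the paper's definition of $\eps$-regularity for a tripartite piece $H[V_i,V_j,V_k]$ uses an additive error of $\eps n^3$ where $n$ here is $|V_i|+|V_j|+|V_k|$, not a relative error on subsets of prescribed proportion. This is equivalent (up to rescaling $\eps$) to the more common formulation, but when you carry out the defect-Cauchy--Schwarz step you should check that a witnessing triple $(X_{ijk},Y_{ijk},Z_{ijk})$ to the failure of \eqref{eq:regular} still yields an index gain bounded below by a positive function of $\eps_{\regref}$ alone; with this additive convention the witnesses automatically contribute $\Omega(\eps_{\regref}^2)$ to the local squared-density discrepancy without any lower bound on their sizes, so the computation in fact simplifies slightly. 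Apart from this cosmetic adjustment your argument goes through verbatim.
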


Given a $3$-graph $H$, regularised per Lemma~\ref{lem:reg}, and a real $d >0$, define $R_d := R_d(H)$ to denote the $3$-graph whose vertices are the {\em clusters} (i.e., sets) $(V_i)_{i \in [t]}$ and whose edges are the triples $\{V_i,V_j,V_k\}$, $i,j,k \in [t]$, such that $H[V_i,V_j,V_k]$ is $(\eps,d)$-regular. It will be convinient to identify $V(R_{d})$ with $[t]:=\{1,\ldots,t\}$. Given $X \subseteq V(R_d)$ define $\cup X := \bigcup_{i \in X} V_i$. An edge $e \in E(H)$ is said to be {\em crossing with respect to $X$} if there are three clusters $V_i,V_j,V_k$ {\sl captured} by $X$ such that 
$$
1 = |e\cap V_i| = |e\cap V_j|  = |e\cap V_k|.
$$

\begin{lemma}\label{lem:pack}{\em{\bf (Path packing lemma} \cite[Claim~4.2]{RRS08}{\bf)}}
For all $0<\eps < d < 1$, every $(\eps,d)$-lower-regular $3$-partite equitable $3$-graph $H$ on $n$ vertices, with $n$ a sufficiently large integer, contains a family $\P$ of vertex disjoint-paths such that for each $P \in \P$ we have 
$$
|V(P)| \geq \eps(d-\eps)n/3 \mand \sum_{P \in \P} |V(P)| \geq (1-2\eps)n.
$$  
\end{lemma}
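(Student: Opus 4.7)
The plan is to iteratively extract long tight paths via a greedy extension, using the set-based $(\eps,d)$-lower-regularity to control both the length of each extracted path and the balance of the remaining part sizes.

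Step one is the iterative extraction. Let $U_i \subseteq V_i$ denote the vertices of part $V_i$ not yet used by any previously extracted path, initially $U_i = V_i$. I would continue extracting paths while $\min_i |U_i| \geq \eps n/3$, and stop otherwise. By the $3$-partite rotation pattern, a single tight path of length $\ell$ consumes vertex counts from $V_1, V_2, V_3$ that differ by at most one; if the starting part of each successive path is chosen to decrease the currently largest $|V_i \setminus U_i|$, then the three counts $|V_i \setminus U_i|$ stay almost balanced throughout. Hence at the stopping time all three $|U_i|$ are below $\eps n/3 + O(1)$, yielding $\sum_{P \in \P} |V(P)| \geq n - \eps n - O(1) \geq (1-2\eps)n$ for $n$ sufficiently large.

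Step two is the single path extraction. Within the induced $3$-partite $3$-graph $H[U_1, U_2, U_3]$ I would first produce a starting edge, which exists since $(\eps,d)$-lower-regularity gives $e_H(U_1,U_2,U_3) \geq d|U_1||U_2||U_3| - \eps n^3 > 0$ once $|U_i|$ remain above the stopping threshold. I would then grow a tight path $P = (a_1, \ldots, a_\ell)$ greedily: at each step the $3$-partite structure forces $a_{\ell+1}$ to lie in the unique part $V_k$ different from those of $a_{\ell-1}, a_\ell$, and I would pick any $a_{\ell+1} \in U_k \setminus V(P)$ with $\{a_{\ell-1}, a_\ell, a_{\ell+1}\} \in E(H)$, continuing for $L := \lceil \eps(d-\eps)n/3 \rceil$ steps.

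Step three is the length bound. Suppose the greedy extension stalls at some length $\ell < L$; then the last endpoint pair $(u,v) \in V_i \times V_j$ has no extension in $V_k \setminus V(P)$. Writing $W := V_k \setminus V(P)$ of size $\geq |V_k| - L \geq (1/3 - \eps)n$, I would count: by $(\eps,d)$-lower-regularity, $e_H(V_i, V_j, W) \geq d|V_i||V_j||W| - \eps n^3$, which is of order $\Theta(n^3)$. On the other hand, if stalled pairs were common enough to prevent length $L$ from any starting edge, the contribution of such pairs would force the same edge count down below the lower-regular bound. An averaging argument over starting edges (or, equivalently, a rotation/restart that preserves most of the path already built) then produces a starting edge from which the greedy extension reaches length $L$.

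The main obstacle is this length bound in step three. The subtlety is that $(\eps,d)$-lower-regularity is set-based and permits individual pairs to have zero codegree, so a naive greedy from a worst-case starting edge can stall arbitrarily early. I expect to handle this by coupling greedy extension with an averaging argument that identifies a ``good'' starting edge whose successive endpoint pairs are statistically unlikely to be dead ends; the length $\eps(d-\eps)n/3$ is plausibly calibrated precisely so that this averaging counting matches the aggregate edge count guaranteed by lower-regularity.
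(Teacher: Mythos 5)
Your proposal has two genuine gaps, one of which you acknowledge and one of which you seem not to have noticed.

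The gap you did not notice is in Step~2, where you assert that, above your stopping threshold $|U_i| \geq \eps n/3$, lower-regularity gives $e_H(U_1,U_2,U_3) \geq d|U_1||U_2||U_3| - \eps n^3 > 0$. This is false under the paper's definition~\eqref{eq:regular}: with $|U_i| = \eps n/3$ the main term is $d(\eps n/3)^3 = d\eps^3 n^3/27$, which is dwarfed by the error term $\eps n^3$ for any $\eps,d < 1$, so the lower bound is negative and contains no information. The fixed additive $\eps n^3$ error in~\eqref{eq:regular} scales \emph{linearly} in the host size, while the main term scales cubically in the subset sizes; so once the $U_i$ shrink to a constant fraction of order $\eps$, lower-regularity in this form says nothing about $H[U_1,U_2,U_3]$. (The version of Claim~4.2 in \cite{RRS08} that the paper cites uses a density-normalized notion of $\eps$-regularity on subsets of relative size $\geq \eps$; under that notion the density stays at least $d-\eps$ throughout, which is precisely what makes the positivity claim true and the constant $\eps(d-\eps)m$ come out naturally. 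Your proof tacitly imports that normalization while working with the additive one.)

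The gap you do notice --- that set-based lower-regularity permits individual end-pairs to have zero codegree, so a naive greedy can stall early --- is the heart of the lemma, and the ``averaging over starting edges'' you propose does not fill it. The obstruction is not about which edge you start from: after even a handful of greedy steps your end-pair is determined by earlier choices, and averaging over the starting edge does not give you control over which pairs arise downstream. What is needed is a \emph{look-ahead} invariant: at each step you must be able to pick the next vertex $w$ so that the \emph{new} pair $(v,w)$ still has many available extensions, and the role of regularity is precisely to show that among the many neighbours of the current pair $(u,v)$ at least one $w$ yields a heavy new pair $(v,w)$. This is the mechanism in \cite{RRS08}, and it is a genuinely different argument from restarting and averaging: it never dead-ends until the available sets have shrunk below the regularity scale, which is what produces paths of length $\Omega(\eps(d-\eps)m)$ rather than constant length. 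Without that step, or a replacement for it, your Step~3 remains an acknowledged hole rather than a proof.
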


The following is a triviality whose proof is included for completeness.  

\begin{lemma}\label{lem:matching} 
For every $d_{\matref} > 0$ and $\zeta_{\matref} > 0$, there exist an integer $n_{\matref}:=n_{\matref}(d_{\matref},\zeta_{\matref}) >0 $ and a real $\rho_{\matref}(d_{\matref},\zeta_{\matref}) >0 $ such that the following holds whenever $n \geq n_{\matref}$ and $0 < \rho < \rho_{\matref}$. 

Let $H$ be an $n$-vertex $(\rho,d_{\matref})$-dense $3$-graph. Then, $H$ admits a matching covering all but at most $\max\{2,\zeta_{\matref}n\}$ vertices. 
\end{lemma}

\begin{proof}
Given $d:= d_{\matref}$ and $\zeta := \zeta_{\matref}$, set $
\rho_{\matref} := \frac{d \cdot \zeta^3}{27}$. Let $0< \rho < \rho_{\matref}$, let $n$ be sufficiently large, and let $H$ be an $n$-vertex $3$-graph as in the premise. Let $M$ be a maximum matching in $H$. Let $X := V(H) \sm V(M)$ denote the set of vertices not covered by the members of $M$. If $|X| \leq 2$, then the claim follows. Assume then that $|X| \geq 3$. In which case, $e_H(X) = 0$ by the maximality of $M$. Then, 
$$
d\binom{|X|}{3} - \rho n^3 \leq e_H(X) \leq 0.
$$
Consequently
$$
\frac{d}{27}|X|^3 \leq \rho n^3.
$$
Assuming that $|X| > \zeta n$ we arrive at 
$$
\zeta n < |X| \leq (27 \cdot \rho d^{-1})^{1/3} n
$$
contradicting $\rho < \rho_{\matref}$ set at the outset. Consequently, in this case, $|X| \leq \zeta n$ must hold. 
\end{proof}

We are now ready to prove our path-cover lemma, namely Lemma~\ref{lem:path-cover}.

\begin{proofof}{Lemma~\ref{lem:path-cover}}
Given $d:= d_{\pcref}$ and $\zeta := \zeta_{\pcref}$ let $\rho':= \rho_{\matref}(d/2,\zeta/12)$ and set 
\begin{equation}\label{eq:reg-cons}
t_{\reg}:= \max\{8/\rho',8/\zeta\}, \quad d' := \rho'/4, \quad \eps_{\reg} := \min\{d'/2,\zeta/24\}.
\end{equation}
In addition, set 
\begin{equation}\label{eq:cons-local}
\rho_{\pcref} := \rho'/4\; \mand \; \ell_{\pcref} := \frac{T_{\regref}(\eps_{\reg},t_{\reg})}{\eps_{\reg}(d'-\eps_{\reg})}.
\end{equation}
Let $n$ be sufficiently, let $\rho < \rho_{\pcref}$, and let $H$ be an $n$-vertex $(\rho,d)$-dense $3$-graph. 

Let $R_{d'} :=R_{d'}(H)$ denote the reduced graph of $H$ obtained after regularising $H$  using the weak-regularity lemma, namely Lemma~\ref{lem:reg}, applied with $\eps_{\regref}=\eps_{\reg}$ and $t_{\regref} = t_{\reg}$. Let $t:=|V(R_{d'})|$ and identify $V(R_{d'})$ with $[t]$.

\begin{claim}\label{clm:rd-dense}
\text{$R_{d'}$ is $(\rho',d/2)$-dense.}
\end{claim}

\begin{innerproof} Fix $X \subseteq V(R_{d'})$ and let $C_X$ denote the number of edges of $H$ which are crossing with respect to $X$ and that lie in $(\eps_{\reg},d')$-regular triples $H[V_i, V_j, V_k]$, where the sets $V_i,V_j,V_k$ are taken from the underlying regularity partition. Then, 
$$
e_{R_{d'}}(X) \geq C_X/2(n/t)^3;
$$ 
the factor $2$ appearing here is incurred in order to cope with the the fact that cluster sizes are in the set $\{n/t,n/t+1\}$; we use the fact that for a sufficiently large $n$, $2(n/t)^3 \geq (n/t+1)^3$ holds. 

Observe that 
\begin{align*}
C_X \geq e_H(\cup X) - |X| \cdot 2(n/t)^3 - |X|^2\cdot  2(n/t)^3 -\eps_{\reg} t^3 \cdot 2(n/t)^3-|X|^3 d' \cdot 2(n/t)^3.
\end{align*}
Indeed, the second and third terms on the r.h.s. arise from the removal of all edges that have at least two of their vertices in the same cluster captured by $X$ from $E(H[\cup X])$. The fourth term on the r.h.s. arises due the removal of  all (crossing) edges found in $\eps_{\reg}$-irregular triples of clusters. Finally, the last term on the r.h.s. arises from the removal of all (crossing) edges found in triples of clusters whose edge density is at most $d'$. 

As $|X| \leq t$, we arrive at 
\begin{align*}
e_{R_{d'}}(X) \geq \frac{e_H(\cup X)}{2(n/t)^3} - t - t^2 - \eps_{\reg}t^3 -d' t^3.
\end{align*}
As $H$ is $(\rho,d)$-dense, 
$$
e_H(\cup X) \geq d \binom{\sum_{i \in X} |V_i|}{3} - \rho n^3 \geq d\binom{|X|}{3} (n/t)^3 - \rho n^3
$$
holds. Indeed, the term $\binom{|X|}{3} (n/t)^3$ accounts only for edges crossing with respect to $X$ while $\binom{\sum_{i \in X} |V_i|}{3}$ accounts also for triples inside clusters captured by $X$.
By~\eqref{eq:reg-cons}, $t+t^2 \leq 2 t^2 \leq \rho' t^3 /4$ holds. By~\eqref{eq:cons-local}, $\rho \leq \rho'/4$ holds. We may now write   
$$
e_{R_{d'}}(X) \geq \frac{d}{2} \binom{|X|}{3} - (\rho +\rho'/4 + \eps_{\reg} + d')t^3 \geq \frac{d}{2} \binom{|X|}{3} - \rho' t^3,
$$
and the claim follows.
\end{innerproof}

In view of Claim~\ref{clm:rd-dense} and the choice of $\rho'$, it follows, by Lemma~\ref{lem:matching}, that $R_{d'}$ admits a matching $M$ missing at most $\max\{2,\zeta t /12\}$ vertices of $R_{d'}$. For each edge $(V_i,V_j,V_k)$ of $M$, apply Lemma~\ref{lem:pack} to $H[V_i,V_j,V_k]$ as to obtain a system of vertex-disjoint paths as described in Lemma~\ref{lem:pack}. Let $\P$ denote the system of paths thus generated in $H$ over all edges of $M$.  
In each $H[V_i,V_j,V_k]$ corresponding to an edge $(V_i,V_j,V_k$) of $M$, at most $\frac{3}{\eps_{\reg}(d'-\eps_{\reg})}$ paths are {\sl packed}. As $|M| \leq T_{\regref}(\eps_{\reg},t_{\reg}) / 3$ at most $\ell_{\pcref}$ paths are thus packed. 

It remains to argue that the members of $\P$ cover all but at most $\zeta n$ vertices of $H$. In each $H[V_i,V_j,V_k]$ corresponding to an edge $(V_i,V_j,V_k$) of $M$ at most $2 \eps_{\reg}\cdot 6 n/t$ vertices of $H[V_j,V_j,V_k]$ are missed. As $|M| \leq t /3$, at most $12\eps_{\reg} n$ vertices of $H$ are missed this way. From the clusters not covered by $M$ at most $\max\{2,\zeta t/12\} \cdot 2 n/t $ vertices of $H$ are missed. Overall at most $(12 \eps_{\reg}+ \max\{4 /t,\zeta/2\})n $ vertices of $H$ are missed. Owing to~\eqref{eq:reg-cons}, $12 \eps_{\reg} \leq \zeta/2$ and $t\geq t_{\reg} \geq 8/ \zeta$ (so that $4 /t \leq \zeta/2$); consequently 
$12 \eps_{\reg}+ \max\{12/t,\zeta/2\} \leq \zeta$ as required.
\end{proofof}

\subsection{Path-covers in $3$-set-dense $3$-graphs}\label{sec:pc-no-reg-points}

In this section, we provide a significantly shorter proof for Lemma~\ref{lem:path-cover}, under the {\sl strengthened} assumption that the host $3$-graph is $\points\;$-dense and not merely $1$-set-dense. 
%
The main ingredient, so to speak, of the argument for constructing path-covers in $\points\;$-dense graphs is the following. 

\begin{lemma}\label{lem:pack-2} {\em \cite[Claim~4.1]{RRS08}} 
Let $c > 0$. Then, every $3$-partite $3$-graph $H$ having at most $m$ vertices in each partition set and satisfying $e(H) \geq cm^3$, contains a path on at least  $cm$ vertices. 
\end{lemma}

\begin{proofof}{Lemma~\ref{lem:path-cover} for $\points\;$-dense $3$-graphs}
Given $d:=d_{\pcref}$ and $\zeta:= \zeta_{\pcref}$, set $\rho_{\pcref} := \frac{d\zeta^3}{2 \cdot 3^3}$, and set $\ell_{\pcref} := \ciel{1/\rho_{\pcref}}$. Let $0 < \rho < \rho_{\pcref}$ be fixed, let $n$ be sufficiently large, and let $H$ be a $(\rho,d)_{\points}$-dense $n$-vertex $3$-graph. 

We define a sequence of subgraphs $H_0:=H \supseteq H_1 \supseteq H_2 \cdots$ as follows. Let $H_i$, for some $i \geq 0$, be given. If $n_i := |V(H_i)| < \zeta n$, then set $H_{i+1}$ to be the empty graph. Otherwise, note that $e_{H_i}(V_1,V_2,V_3) = e_H(V_1,V_2,V_3)$ whenever $V_1,V_2,V_3 \subseteq V(H_i)$. 
Choose an arbitrary equipartition $V(H_i) = U_1 \discup U_2 \discup U_3$ such that 
$|U_1| \leq |U_2| \leq |U_3| \leq |U_1|+1$. Then, $|U_i| \in \{n_i/3, n_i/3+1\}$, for every $i \in [3]$. The $3$-partite subgraph of $H_i$ induced by the edges of $H_i$ crossing $U_1,U_2$, and $U_3$ has $e_{H_i}(U_1,U_2,U_3)$ edges. For the latter quantity we observe that 
$$
e_{H_i}(U_1,U_2,U_3) = e_H(U_1,U_2,U_3) \geq d |U_1||U_2||U_3| -\rho n^3 \geq \left(\frac{d\zeta^3}{3^3}-\rho \right)n^3 \geq \frac{d\zeta^3}{2 \cdot 3^3}n^3.
$$  
As $|U_j| \leq 2n/3$ holds for every $j\in [3]$, it follows, by Lemma~\ref{lem:pack-2}, that $H_i$ contains a path $P_i$ of length at least $\frac{d\zeta^3}{3^4}n$. Set $H_{i+1} := H_i - V(P_i)$. 

In the above sequence, all graphs $H_i$ with $i > \ell_{\pcref}$ are empty. Hence, At most $\ell_{\pcref}$ paths are defined throughout the above process and these form the required path-cover of $H$. 
\end{proofof}

\section{Proofs of the main results}\label{sec:proof}

Let $H$ be a $3$-graph, let $x,y \in V(H)$, and let $H' \subseteq H$ with $\{x,y\} \not\subseteq V(H')$ possible. 
Define 
$$
\deg_{H'}(x,y) : = |N_H(x,y) \cap V(H')|. 
$$


\subsection{Proof of Theorem~\ref{thm:main}: $2$-degree setting}\label{sec:proof-main-1}

Given $d> 0$ and $\alpha >0$, set 
\begin{equation}\label{eq:kmg}
0 < \beta < \min\{d,\alpha\},\; \kappa := \kappa_{\absref}(d,\alpha,\beta)\leq \beta/2,\; \mu := \mu_{\absref}(d,\alpha,\beta),\; \nu \ll \mu,\; \zeta:= \mu - 2\nu. 
\end{equation}
The inequality $\kappa \leq \beta/2$ is supported by Lemma~\ref{lem:path-absorb}. 
Set 
\begin{equation}\label{eq:rho-final}
0< \rho < \min \left\{\rho_{\absref}(d,\alpha,\beta),2^{-1}(1-\kappa-\nu)^3  \cdot \rho_{\pcref}(d,\zeta), \rho_{\conref}(d,\beta/8)\cdot(\nu(1-\kappa)/4)^3\right\}.
\end{equation}

Let $n$ be sufficiently large and let $H$ be an $n$-vertex $(\rho,d)_{\cherry}$-dense $3$-graph satisfying $\delta_2(H) \geq \alpha (n-2)$. 
As $\rho < \rho_{\absref}(d,\alpha,\beta)$, $H$ admits a $(\beta,\mu,\kappa)$-absorbing path $A$, by Lemma~\ref{lem:path-absorb}. Define $H' := H -V(A)$ (i.e., $H'$ is attained from $H$ by removing the vertices of $A$ from the latter). 

We prepare for an application of Lemma~\ref{lem:res-2}. For each pair of vertices $\{x,y\} \in \binom{V(H)}{2}$, set $U_{\{x,y\}} : = N_H(x,y) \cap V(H')$. Then, 
$$
\deg_{H'}(x,y) = |U_{\{x,y\}}| \geq \alpha (n-2) - |V(A)| \geq \alpha (n-2) - \kappa n \geq (\alpha -\kappa)n -2\alpha \overset{\alpha > \beta}{\geq} (\beta-\kappa)n \overset{\eqref{eq:kmg}}{\geq} \beta n/2,
$$
holds for every $\{x,y\}\in \binom{V(H)}{2}$. Lemma~\ref{lem:res-2}, applied with $\nu_{\resrefone}=\nu$, then asserts that there exists a set $R \subseteq V(H')$ satisfying 
\begin{equation}\label{eq:R-codegree}
|N_H(x,y) \cap R| \geq (\beta/2 -2n^{-1/3})|R| \geq \beta|R|/4
\end{equation}
for every $\{x,y\} \in \binom{V(H)}{2}$. Consequently, $\delta_2(H[R]) \geq \beta |R| /4$. Moreover, the set $R$ also satisfies $|R| = \nu n' \pm \nu (n')^{2/3}$, where $n' := |V(H')| \geq (1-\kappa)n$. One may then write 
\begin{equation}\label{eq:R-size}
\frac{\nu(1-\kappa)}{2}n \leq |R| \leq 2 \nu n.
\end{equation}

Set $H'':= H' -R$ (i.e., $H''$ is attained from $H'$ by removing the members of $R$). Then, 
\begin{equation}\label{eq:n2}
n'' := |V(H'')| \geq n -|V(A)| - |R| \overset{\eqref{eq:R-size}}{\geq} n- \kappa n - 2 \nu n = (1- \kappa-2 \nu)n. 
\end{equation}

\begin{claim}\label{clm:H_2-dense}
$H''$ is $(\xi_1,d)_{\cherry}$-dense for some $\xi_1 < \rho_{\pcref}(d,\zeta)$. 
\end{claim}

\begin{innerproof}
Fix $\vec G_1,\vec G_2 \subseteq V(H'') \times V(H'')$ and note that as $H''$ is an induced subgraph of $H$, then 
\begin{align*}
e_{H''}(\vec G_1,\vec G_2) &\overset{\phantom{\eqref{eq:rho-final}}}{\geq} d|\P_2(\vec G_1,\vec G_2)| -\rho n^3\\
& \overset{\eqref{eq:rho-final}}{\geq} d|\P_2(\vec G_1,\vec G_2)| - 2^{-1} \cdot \rho_{\pcref}(d,\zeta)(1-\kappa-2\nu)^3 n^3 \\
 &\overset{\eqref{eq:n2}}{\geq} d|\P_2(\vec G_1,\vec G_2)| - 2^{-1} \cdot \rho_{\pcref}(d,\zeta)(n'')^3; 
\end{align*}
\end{innerproof}

As $\cherry$-denseness implies $1$-set-denseness, it follows that $H''$ is $(\xi_1,d)$-dense for some $\xi_1 < \rho_{\pcref}(d,\zeta)$. Lemma~\ref{lem:path-cover} then asserts that $H''$ admits a collection $\P' := \{P_1,\ldots,P_{h-1}\}$, $h-1 \leq \ell_{\pcref}(d,\zeta)$, of vertex-disjoint paths covering all but at most $\zeta n'' \leq \zeta n$ vertices of $H''$ and thus of $H$ as well. Write $P_h := A$ and set $\P := \P' \cup \{P_h\}$. 

In what follows we use the set $R$ in order to concatenate the members of $\P$ into a (tight) cycle. This entails $h$ applications of the connecting lemma fitting to this setting, namely Lemma~\ref{lem:connect}. We proceed in two steps. First, a path 
of the form 
\begin{equation}\label{eq:L-path}
L := P_1 \circ K_1\circ P_2 \circ K_2 \circ\cdots \circ K_{h-1} \circ P_h
\end{equation}
is constructed, where here each $K_i$ is a $10$-path disjoint of all other $10$-paths involved in the construction. 
Second, the remaining "free" end-pair of $P_h$ is connected using an additional $10$-path with the remaining "free" end-pair of $P_1$. Also here we 
The resulting cycle we denote by $C$. We now make this precise.    

The construction of $L$ is done inductively. Set $L_1 := P_1$. Assuming 
\begin{equation}\label{eq:Li-partial}
L_i:=P_1 \circ K_2 \cdots\circ K_{i-1}\circ P_i
\end{equation}
has been constructed for some $i \in [h-1]$, we define $L_{i+1}$ as follows. Let $\{a,b\}$ be the free end-pair of $P_i$ and let $\{c,d\}$ be one of the end pairs of $P_{i+1}$. Set 
\begin{equation}\label{eq:Ri-partial}
R_i := (R \sm V(L_i)) \cup \{a,b,c,d\}.
\end{equation}
Observing that $|V(L_i) \cap R| \leq 10 h$, we may write that 
\begin{equation}\label{eq:Ri-size}
|R_i| \geq |R| - 10 h \overset{\eqref{eq:R-size}}{\geq} \nu(1-\kappa)n/2 -10h \geq \nu(1-\kappa)n/4
\end{equation}
holds for a sufficiently large $n$. 
Owing to $\rho < \rho_{\conref}(d,\beta/8) \cdot (\nu(1-\kappa)/4)^3$, by~\eqref{eq:rho-final}, an argument identical to that seen in Claim~\ref{clm:H_2-dense} establishes that $H[R_i]$ is $(\xi_2,d)_{\cherry}$-dense for some $\xi_2 < \rho_{\conref}(d,\beta/4)$. Moreover, 
$$
\delta_2(H[R_i]) \overset{\eqref{eq:R-codegree}}{\geq} \beta|R|/4 - 10h \geq \beta|R| /8 \geq \beta |R_i| /8,
$$
holds whenever $n$ is sufficiently large. The path $K_i$ connecting $\{a,b\}$ and $\{c,d\}$ in $H[R_i]$ then exists, by Lemma~\ref{lem:connect} applied with $d_{\conref} =d $ and $\alpha_{\conref} = \beta/8$. This completes the construction of $L$. Completing $L$ into the aforementioned cycle $C$ is done using the exact same argument provided for $K_i$. 

With respect to the original $3$-graph $H$, the cycle $C$ covers all vertices of the latter but (a subset of) those found in $V(H'') \sm V(\P)$ and those vertices of $R$ not used throughout the construction of $C$. The number of uncovered vertices is then at most 
$$
|V(H'') \sm V(\P)| + |R| \leq \zeta n + 2 \nu n \overset{\eqref{eq:kmg}}{=} (\mu - 2 \nu)n +2 \nu n = \mu n.
$$
The path $A$, present in $C$, can then absorb all uncovered vertices, rendering a tight Hamilton cycle in $H$. This completes the proof of Theorem~\ref{thm:main}.

\subsection{Proof of Theorem~\ref{thm:main2}: $1$-degree setting}\label{sec:proof-main-2}

Given $\alpha > 0 $ and $d >0$ satisfying $\alpha +d > 1$, set 
$$
\eta \ll \min\bigg\{g(\alpha,d),\; a+d-1,\; \frac{\alpha +d-1}{3(d+1)+2}\bigg\}.
$$ 
Recalling the definition of $g(\cdot)$ from~\eqref{eq:g-func}, note that we have just imposed $\eta \ll \alpha, d$ as well. 
With $\eta$ set, define 
$$
\kappa:= \kappa_{\absrefone}(d,\alpha,\eta) \ll \eta,\; \mu := \mu_{\absrefone} (d,\alpha,\eta),\; \nu \ll \mu,\; \zeta:= \mu -2 \nu,
$$
where the inequality $\kappa \ll \eta$ is supported by Lemma~\ref{lem:path-absorb-2}. 
As $\kappa \ll \eta \ll \alpha + d - 1$, we may insist on $\alpha -\kappa +d -1 >0$. 
Fix an auxiliary constant $0 < \gamma \ll \eta$ such that 
\begin{equation}\label{eq:legal-1}
\alpha -\kappa - \gamma + d -1 >0.
\end{equation}
Define an additional auxiliary constant $0 < \gamma' \ll \eta$ such that 
\begin{equation}\label{eq:legal-2}
\eta + \kappa + \gamma' \ll g(\alpha -\kappa-\gamma,d) \overset{\eqref{eq:g-func}}{=} \min \bigg\{\alpha -\kappa-\gamma,d, \frac{\alpha-\kappa-\gamma+d-1}{d+1} \bigg\}.
\end{equation}
To see that~\eqref{eq:legal-2} is possible, we iterate over each of the constraints involved in it. For the first one, note that the requirement $\eta + \kappa + \gamma' \ll \alpha -\kappa-\gamma$ can be rewritten as to read 
$\eta +2\kappa +\gamma + \gamma' \ll \alpha$. Owing to being able to choose $\kappa,\gamma,\gamma' \ll \eta$ this amounts to requiring $ \eta \ll \alpha$; the latter is already imposed on $\eta$. In a similar manner, the second constraint reading $\eta + \kappa + \gamma' \ll d$, amounts to requiring $\eta \ll d$ which is also already imposed on $\eta$. Finally, for the third constraint appearing in~\eqref{eq:legal-2}, appeal again to the ability to choose $\kappa,\gamma,\gamma' \ll \eta$, so that the third constraint amounts to requiring 
$\eta \ll \frac{\alpha +d-1}{3(d+1)+2}$ which is also already imposed on $\eta$. 

We conclude our selection of constants by setting 
\begin{align}
0<\rho < \min\bigg\{\eta/4,\rho_{\absrefone}&(d,\alpha,\eta),\;2^{-1}\eta(1-\kappa-2\nu)^3\cdot\rho_{\pcref}(d,\zeta),\nonumber \\ 
&\rho_{\conrefone}(d,\alpha-\kappa-\gamma, \eta + \kappa + \gamma') \cdot (\nu(1-\kappa)/4)^3\bigg\}. \label{eq:rho-end}
\end{align}

Let $n$ be sufficiently large and let $H$ be an $n$-vertex $(\rho,d)_{\cherry}$-dense $3$-graph satisfying $\delta(H) \geq \alpha \binom{n-1}{2}$.
As $\rho < \rho_{\absrefone}(d,\alpha,\eta)$, $H$ admits a $(d-\eta,\mu,\kappa)$-absorbing-path $A$, by Lemma~\ref{lem:path-absorb-2}. Define $H' := H - V(A)$. 

We prepare for an invocation of Lemma~\ref{lem:res-2}. Let $D_{d-\eta}(H)$ denote the set of pairs $\{x,y\} \in \binom{V(H)}{2}$ satisfying $\deg_H(x,y) \geq (d-\eta)n$. Owing to $\eta \ll d$ and $\rho < \eta/4$, by~\eqref{eq:rho-end}, the set $D_{d-\eta}(H)$ has order of magnitude $\Omega(n^2)$, by~\eqref{eq:Bbeta-size-2}. For each pair $\{x,y\} \in D_{d-\eta}(H)$, set $U_{\{x,y\}} : = N_H(x,y) \cap V(H')$. For each vertex $x \in V(H)$, let $L_x := L_x(H)$ denote the link graph\footnote{See~\S~\ref{sec:cnt} for a definition.} of $x$ in $H$. Set $L_{x,H'}$ to be the subgraph of $L_x$ induced by $V(H')$. For any pair $\{x,y\} \in D_{d-\eta}(H)$, we may write 
$$
\deg_{H'}(x,y) = |U_{\{x,y\}}| \geq \deg_H(x,y) - |V(A)| \geq (d-\eta-\kappa)n. 
$$ 
For a vertex $x \in V(H)$, we may write
\begin{align*}
\deg_{H'}(x) & = e(L_{x,H'}) \geq e(L_x) - |V(A)|\cdot n \\
& \geq \alpha \binom{n-1}{2} - \kappa n^2\\
& \geq (\alpha - \kappa)\frac{n^2}{2} \quad \text{(as seen in~\eqref{eq:been-here})}\\
& \geq (\alpha -\kappa)\binom{n'-1}{2},
\end{align*}
where $n' := |V(H')|$. 

Lemma~\ref{lem:res-2}, applied with $\nu_{\resrefone} = \nu$, then asserts that there exists a set $R \subseteq V(H')$ of size $|R| = \nu n' \pm \nu (n')^{2/3}$, and thus satisfying~\eqref{eq:R-size}, such that 
\begin{equation}\label{eq:codeg-high}
|N_H(x,y) \cap R| \geq \big(d-\eta-\kappa - 2n^{-1/3}\big)|R|,
\end{equation}
whenever $\{x,y\} \in D_{d-\eta}(H)$, and such that for every $x \in V(H)$
$$
e(L_{x,H'}[R]) \geq \big(\alpha -\kappa -3n^{-1/3}\big)\binom{|R|}{2} 
$$
holds. In particular, 
\begin{equation}\label{eq:R-1-deg}
\delta(H[R]) \geq \big(\alpha -\kappa -3n^{-1/3}\big)\binom{|R|}{2}.
\end{equation} 

With $A$ and $R$ set, define $H'' := H' -R$. Then, $n'':= |V(H'')|$ satisfies~\eqref{eq:n2}. This estimate, coupled with 
$$
\rho \overset{\eqref{eq:rho-end}}{<} 2^{-1}\eta(1-\kappa-2\nu)^3\cdot\rho_{\pcref}(d,\zeta),
$$
and put through the argument seen in Claim~\ref{clm:H_2-dense},
collectively imply that 
\begin{equation}\label{eq:H''}
\text{$H''$ is $(\xi_1,d)_{\cherry}$-dense for some $\xi_1 < 2^{-1} \eta \cdot \rho_{\pcref}(d,\zeta)$}. 
\end{equation}

Recall the definition put forth in~\S~\ref{sec:positive} of the spanning subgraph $H''_{d-\eta} \subseteq H''$. In the following claim, we essentially appeal to~\eqref{eq:Hbeta} asserting that $H''_{d-\eta}$ is $\cherry$-dense. Here, however, we require much less.  

\begin{claim}
$H''_{d-\eta}$ is $(\xi_2,d)$-dense, for some $\xi_2 < \rho_{\pcref}(d,\zeta)$.
\end{claim}

\begin{innerproof}
Fix $X \subseteq V(H''_{d-\eta})$. Then,
$$
e_{H''_{d-\eta}}(X)  \geq e_{H''}(X) - |B_{d-\eta}(H'')| \cdot n''.
$$
Writing $\eta = d - (d-\eta)$ and appealing to both~\eqref{eq:Bbeta-size-2} and~\eqref{eq:H''}, we may write
\begin{align*}
e_{H''_{d-\eta}}(X) & \overset{\phantom{\eta \leq 1}}{\geq} d\binom{|X|}{3} - \frac{\rho_{\pcref}(d,\zeta)\eta}{2} (n'')^3 - \frac{\rho_{\pcref}(d,\zeta)\cdot\big(d-(d-\eta)\big)}{2\cdot \big(d-(d-\eta)\big)}(n'')^3\\
& \overset{\eta \leq 1}{\geq} d\binom{|X|}{3} - \rho_{\pcref}(d,\zeta)(n'')^3.
\end{align*}
As $H''_{d-\eta}$ spans $H''$, $|V(H''_{d-\eta})| = n''$ holds, and the claim follows. 
\end{innerproof}

Lemma~\ref{lem:path-cover} then asserts that $H''_{d-\eta}$ admits a collection $\P' := \{P_1,\ldots,P_{h-1}\}$, $h-1 \leq \ell_{\pcref}(d,\zeta)$, of vertex-disjoint paths covering all but at most $\zeta n'' \leq \zeta n$ vertices of $H''$ (recall that $H''_{d-\eta}$ spans $H''$) and thus of $H$ as well. Write $P_h := A$ and set $\P := \P' \cup \{P_h\}$. By definition of $A$ and $H''_{d-\eta}$,
\begin{equation}\label{eq:inD}
\{x,y\} \in D_{d-\eta}(H), \; \text{whenever $\{x,y\}$ is an end pair of some path $P \in \P$.}
\end{equation}

As in the proof of Theorem~\ref{thm:main}, we seek to construct a path $L$ of the form~\eqref{eq:L-path} and then close the latter into a (tight) cycle $C$. 
With the approach for this construction here conceptually identical to that seen in the proof of Theorem~\ref{thm:main}, we focus on the differences. More specifically, we are to show that $h$ applications of the connecting lemma relevant to the setting at hand, namely Lemma~\ref{lem:con-1deg}, can be carried out as to construct the cycle $C$. To that end, for $ i \in [h-1]$, define $L_i$ and $R_i$ as in~\eqref{eq:Li-partial} and~\eqref{eq:Ri-partial}, respectively. 
The following series of claims verifies that $H[R_i]$ adheres to the premise of Lemma~\ref{lem:con-1deg} so that the pairs $\{a,b\}$ and $\{c,d\}$ (per the definition of $R_i$) can be connected through $H[R_i]$. 

\begin{claim}\label{clm:Ri-min-deg}
$\delta(H[R_i]) \geq \big(\alpha -\kappa -\gamma \big)\binom{|R|}{2}$. 
\end{claim}

\begin{innerproof}
Observe that
$$
\delta(H[R_i]) \overset{\eqref{eq:R-1-deg}}{\geq} \delta(H[R]) - 10h \cdot n \geq \big(\alpha -\kappa -3n^{-1/3}\big)\binom{|R|}{2} - 10h \cdot n, 
$$
where the term $10h \cdot n$ accounts for all pairs involving a vertex in $V(L_i) \cap R$. 
As $10h\cdot n = O(n)$ (recall that $h-1 \leq \ell_{\pcref}(d,\zeta)$) and $|R| = \Omega(n)$, by~\eqref{eq:R-size}, the claim follows. 
\end{innerproof}

Next, we consider the codegree of the pairs $\{a,b\}$ and $\{c,d\}$, per the definition of $R_i$. 

\begin{claim}\label{clm:Ri-pair}
$\deg_{H[R_i]}(a,b), \deg_{H[R_i]}(a,b) \geq \big(d-\eta-\kappa - \gamma'\big)|R|$
\end{claim}

\begin{innerproof}
By~\eqref{eq:inD}, the pairs $\{a,b\},\{c,d\}$ lie in $D_{d-\eta}(H)$. Consequently~\eqref{eq:codeg-high} holds for both these pairs. 
Then, 
$$
\deg_{H[R_i]}(a,b) \geq \big(d-\eta-\kappa - 2n^{-1/3}\big)|R| - 10h. 
$$
As $10h = O(1)$ and $|R| = \Omega(n)$, by~\eqref{eq:R-size}, the claim follows for $\{a,b\}$. A similar argument holds for $\{c,d\}$.  
\end{innerproof}

Next, we address the $\cherry$-denseness of $H[R_i]$. 

\begin{claim}\label{clm:Ri-dense}
$H[R_i]$ is $(\xi_3,d)_{\cherry}$-dense, for some $\xi_3 < \rho_{\conrefone}(d,\alpha-\kappa-\gamma, \eta + \kappa + \gamma')$. 
\end{claim}

\begin{innerproof}
The set $R_i$ satisfies~\eqref{eq:Ri-size}. Then, owing to  
$$
\rho < \rho_{\conrefone}(d,\alpha-\kappa-\gamma, \eta + \kappa + \gamma') \cdot (\nu(1-\kappa)/4)^3,
$$
an argument identical to that seen in Claim~\ref{clm:H_2-dense} establishes the claim. 
\end{innerproof}

Owing to~\eqref{eq:legal-1} and~\eqref{eq:legal-2}, $\alpha -\kappa -\gamma + d - 1 >0$ and $\eta+ \kappa + \gamma' < g(\alpha - \kappa - \gamma,d)$ hold, respectively. These inequalities togehter with Claims~\ref{clm:Ri-min-deg},~\ref{clm:Ri-pair}, and~\ref{clm:Ri-dense} collectively assert that $H[R_i]$ and the pairs $\{a,b\}$ and $\{c,d\}$ satisfy the premise of Lemma~\ref{lem:con-1deg} with $\alpha_{\conrefone} = \alpha -\kappa -\gamma$, $d_{\conrefone} = d$, and $\eta_{\conrefone} = \eta + \kappa+\gamma'$. The pairs $\{a,b\}$ and $\{c,d\}$ can then indeed be connected through $H[R_i]$ as alleged. 

This completes the definition of the cycle $C$ containing the absorbing-path $A$. To prove that $C$ can be extended as to absorb all possibly uncovered vertices, use the argument seen for this in the proof of Theorem~\ref{thm:main}. This concludes our proof of Theorem~\ref{thm:main2}.

\vspace{3ex}
\noindent
{\large \bscaps{Acknowledgements.}} We would like to thank Yury Person and Mathias Schacht for their helpful remarks on an earlier draft of the manuscript. We are greatly indebted to two anonymous referees for their meticulous dedication towards improving this manuscript, for making numerous suggestions, finding mistakes, and overall treating this manuscript as their own.

\bibliographystyle{amsplain}
\bibliography{lit}
\end{document}